\documentclass{birkmult}

\usepackage{amsmath,amsthm,amssymb,mathrsfs,amscd,amsthm,amscd,
amsfonts,graphicx,mathtools, bbm}
\usepackage[cmtip,all]{xy} 	
\usepackage{color}
\usepackage{hyperref}
\hypersetup{
	backref,
	pagebackref,
	colorlinks=true,
	linkcolor=blue,
	filecolor=magenta,      
	urlcolor=cyan,
}

\newcommand{\ko}{{\mathcal O}}

\def\R{\mathbbm{R}}
\def\C{\mathbbm{C}}
\def\Z{\mathbbm{Z}}
\def\Q{\mathbbm{Q}}

\def\fm{\frak{m}} 
\def\fn{\frak{n}}
\def\fp{\frak{p}}
\def\fq{\frak{q}}

\def\x{\langle x \rangle}

\DeclareMathOperator{\Spec}{Spec}
\DeclareMathOperator{\Max}{Max}
\DeclareMathOperator{\Ann}{Ann}
\DeclareMathOperator{\Sing}{Sing}
\DeclareMathOperator{\Sm}{Sm}
\DeclareMathOperator{\NSm}{NSm}
\DeclareMathOperator{\Reg}{Reg}
\DeclareMathOperator{\Supp}{Supp}

\DeclareMathOperator{\Quot}{Quot}

\DeclareMathOperator{\rank}{rank}
\DeclareMathOperator{\rad}{rad}
\DeclareMathOperator{\chara}{char}
\DeclareMathOperator{\coker}{coker}

\DeclareMathOperator{\mng}{mng}

\newtheorem{theorem}{Theorem}
\newtheorem{lemma}[theorem]{Lemma}
\newtheorem{corollary}[theorem]{Corollary}

\newtheorem{proposition}[theorem]{Proposition}
\newtheorem{definition}[theorem]{Definition}
\newtheorem{notation}[theorem]{Notation}
\newtheorem{example}[theorem]{Example}
\newtheorem{remark}[theorem]{Remark}

\begin{document}

%
%
\subjclass{13B35, 13B40, 14A15, 14B05, 14B07}
\keywords{Formal power series, completed tensor product, semicontinuity, Milnor number, Tjurina number, hypersurface, complete intersection, semi-local ring}

\begin{center}
{\bf \large Semicontinuity for families of power series II}\\
\medskip
{\bf Dedicated to L\^{e} D\~{u}ng Tr{á}ng}
\bigskip

Gert-Martin Greuel and Gerhard Pfister
\end{center}

\email{gm.greuel@gmail.com, greuel@rptu.de}
\email{pfister@mathematik.uni-kl.de, pfister@rptu.de}
 
\bigskip
\noindent {\bf Abstract}\\
The present article is a generalization of our paper \cite{GP21}, together with new applications.
The original problem  came up in connection with the classification of singularities in positive characteristic. 
Then it is important that certain invariants 
cannot increase if we deform a given singularity. In \cite{GP21} we proved this for the completed fiber dimension under deformations with section. In general the nearby fiber contains however several singularities and in this article we prove that also the sum of the completed fiber dimensions behaves semicontinuous.
This is well known for analytic 
or algebraic families. However, for families of power series the problem is more difficult, since the the modules defining the invariants
are quasi-finite but not finite over the base space.

In fact, in general the usual fibre dimension is not semicontinuous. 
However, if we pass to the completed fibres in a family of rings or modules over arbitrary Noetherian rings, we can prove that  their dimension  is semicontinuous. The proof is different from that given  in \cite{GP21} and, though  more general, even easier.

Finally we apply this to prove the semicontinuity of several singularity invariants, such as the Milnor number and the Tjurina number in families of hypersurfaces and complete intersections. We end with a problem concerning the Milnor number of an isolated complete intersection singularity.  


\renewcommand{\contentsname}{Table of Contents}
\tableofcontents

\section*{Introduction}
The semicontinuity of (numerical) invariants is an important property  in the study of varieties and singularities, when these vary in a family over some base space. 
One possible application is the classification of singularities and their modality by investigating adjacent singularities that appear in nearby fibers. Semicontinuity then means that the singularities in the nearby fibers are "less complicated". To study the nearby fibers it is not sufficient  to consider families over complete local rings but one has to consider families of power series parametrized by some algebraic variety in the neighbourhood of a given point. In this paper we consider families of formal power series over the spectrum of an arbitrary Noetherian ring (of any characteristic) with fibers over closed as well non-closed points.\\
Given a morphism $A\to R$ of Noetherian rings, the usual fiber 
$R(\fp) = R\otimes_Ak(\fp)$, $\fp \in \Spec A$, $k(\fp)$ the residue field of $A_\fp$, is not the right object for our applications. We need the completed fiber 
$\hat R(\fp)=R(\fp)^{\wedge,I}$, where ${}^{\wedge,I}$ denotes the $I$-adic completion, $I\subset R$ an ideal such that $R/I$ is finite over $A$. Then $\hat R(\fp)$ is a semi-local ring (this is not the case with $R(\fp)$) and we study the singularities $\hat R(\fp)$, when $\fp$ varies in $\Spec A$. The treatment of fibers over non-closed points is not only theoretically interesting but has unexpected computational applications as is shown in \cite{GPS22}.\\
In the first chapter we introduce the completed tensor product in order to the individual completed fibers $\hat R(\fp)$ in a family over $A$  and study its properties. In the second chapter we study the completed fiber as   semi-local ring and in the third chapter we prove our main semicontinuity theorem (Theorem \ref{thm.scont}). In chapter four we clarify the relation between smoothness and (absolute) regularity. In the last chapter we apply our main theorem to prove the semicontinuity of the Milnor number and the Tjurina number in families of hypersurface and complete intersection singularities. Other invariants can be treated in a similar way, but we do not pursue this here (however, not all singularity invariants are semicontinuous).\\
The proof of our main semicontinuity theorem is different and even easier than the one given in \cite{GP21} in the special case, where $R = A[[x]]$ and where $I=\x$ is the ideal of the trivial section. In \cite{GP21} we used the Henselian tensor product to prove the semicontinuity for modules given by formal power series that admit an algebraic presentation. Due to our new approach here, we generalize and improve the results of  \cite{GP21} by omitting this restriction.\\ 

{\bf Acknowledgments:} This article is dedicated to the memory of L\^{e} D\~{u}ng Tr{á}ng, with whom we shared a long-standing friendship and a fruitful mathematically collaboration, which has also resulted in joint publications.
We would like to
thank the Mathematical Institute of the University of Freiburg for its hospitality and the
RPTU Kaiserslautern for its support.

\section{The completed tensor product}\label{sec.qms}
\medskip

In this section we consider families of completed rings and modules, parametrized by the spectrum of a Noetherian ring. We recall first some basic facts about completions.

Let $P$ be any ring and $I\subset P$ an ideal.  
The inverse limit 
$$P^\wedge:= P^{\wedge,I}:=\lim\limits_{\longleftarrow}(P/I^m)$$
of the inverse system $\{P/I^{m+1}\twoheadrightarrow P/I^{m}\}$
is called  the {\em completion of $P$ with respect to $I$} or the {\em $I$-adic completion}
of $P$. An element of $P^\wedge$ is a sequence $(f_1,f_2,...)$, $f_m \in P/I^m$, such that $f_{m+1}/I^m = f_m$.
We have a natural ring map $P\to P^{\wedge}$, given by $f\mapsto (f/I,f/I^2,...)$.
For example, if  $P=A[x], x=(x_1,...,x_n)$, $A$ a ring, and $I = \langle x_1,\ldots,x_n\rangle$, then  $P^{\wedge, I}=A[[x]]$ is the formal power series ring over $A$. 
 
This generalizes immediately to a $P$-module $M$.\footnote{We assume all rings to be associative, commutative and with unit and any module unitary.} 
 The $I$-adic completion of $M$ is $M^\wedge := M^{\wedge,{I}}:= \lim\limits_{\longleftarrow}(M/I^mM)$. An element $x\in M^\wedge$ is given as a sequence 
$(x_1,x_2,...)$, $x_m \in M/I^mM$, such that $x_{m+1}/I^mM = x_m$. We have a natural map $M \to M^{\wedge}$, $x\mapsto (x/IM, x/I^2M,...)$,  which has $\bigcap I^mM$ as kernel.
 $P$ (resp. $M$) is called {\em ($I$-adically) complete} if  $P \to P^{\wedge}$ (resp. $M \to M^{\wedge}$) is an isomorphism. 

For any ring map $P \to Q$ we often write $Q^{\wedge, I}$ or $Q^\wedge$ instead of $Q^{\wedge, IQ}$ and we have a natural ring map
 $P^{\wedge} \to Q^{\wedge}$. A $Q$-module $N$ is  also a $P$-module and we have
$N^{\wedge,{IQ}}= \lim\limits_{\longleftarrow}(N/I^mN) = N^{\wedge,I}$. Then
$N^{\wedge}$ is in a natural way a $P^\wedge$-module and a $Q^\wedge$-module. 
\\

  The following properties of the completion are well known.
  
\begin{lemma}\label{lem.compl}
Let $P$ be a Noetherian ring, $I\subset P$ an ideal and 
$M$ a $P$-module. Let
${}^{\wedge}$ denote the $I$-adic completion.
\begin{enumerate}
\item $M^{\wedge}$ depends only on the radical of $I$.
\item $M^{\wedge}$ is $I$-adically complete.
\item $I^m M^\wedge = (I^m M)^\wedge$ and $M/I^m M=M^{\wedge}/I^m M^{\wedge}$ for all $m \ge 1$.
\item If $P$ is $I$-adically complete then $I\subset \rad (P)$.\footnote{$\rad (P)$ denotes the Jacobson radical of $P$.}\\
In particular, $IP^\wedge \subset \rad(P^\wedge)$.
\item $P^{\wedge}$ is Noetherian and  $P\to P^{\wedge}$ is flat.
\item If  $M$ is a finite $P$-module, then $M^\wedge = M\otimes_P P^\wedge$. \\
In particular, if $J\subset P$ is an ideal, then  $(P/J)^\wedge = P^\wedge / JP^\wedge$.
\item The completion is an exact functor on the caetgory of finite $P$-modules.
\item  Let $J\subset P$ be an ideal. If $P$ is
$I$-adically complete and $P/I$ is $J$-adically complete, then $P$ is $J$-adically complete.
\item Let  $M$ be a finite $P$-module. The kernel 
$\cap_{n=1}^\infty I^nM$ of $M \to M^\wedge$ consists of those $x \in M$ annihilated by some element of $1 + I$. In particular, 
since $(M^\wedge)^\wedge = M^\wedge$,
$\cap_{n=1}^\infty I^nM^\wedge =0$.
\item Let  $I\subset \rad(P)$. If $M$ is finite over $P$ then $M\to M^\wedge$ is injective. In particular,  $P\to P^\wedge$  is injective
and $ P^\wedge$ is faithfully flat over $P$.
Furthermore, if  $IM =M$ then $M=0$ (Nakayama's lemma).
\item Let $J\subset P$ be an ideal and $M$ a finite $P$-module, then $(M^{\wedge,I})^{\wedge, J} = M^{\wedge, I+J}$. \item If $I=\fm$ is a maximal ideal then $P^{\wedge} = (P_\fm)^{\wedge,\fm}$ is a local ring with maximal ideal $\fm P^{\wedge}$ and residue field $P^{\wedge}/\fm P^{\wedge} = P/\fm.$ If $P$ is regular then $P^\wedge$ is regular.
\item If $(P,\fm)$ is local, then $P$ is regular iff its $\fm$-adic completion $P^{\wedge}$ is regular.
\end{enumerate} 
\end{lemma}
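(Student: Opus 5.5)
Since the lemma collects standard facts, I will not reprove the foundations from scratch. I take the Artin--Rees lemma, Krull's intersection theorem and the standard references (Matsumura, Atiyah--Macdonald, Stacks) as the source, and organise the argument so that each item follows from a few core facts in order. The first block is formal. For (1), $\sqrt{I}^k\subset I$ for some $k$ because $P$ is Noetherian, so the systems $\{M/I^mM\}$ and $\{M/\sqrt{I}^mM\}$ are cofinal and have the same limit. For (4), if $P$ is complete and $a\in I$, then $\sum_{n\ge 0} a^n$ converges, so $1-a$ is a unit; hence $1-ab$ is a unit for every $b\in P$, which means $a\in\rad(P)$. The "in particular" follows once (2) is known for $P^\wedge$.

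The second block rests on Artin--Rees. First I show that $P^\wedge$ is Noetherian: $P^\wedge$ is a quotient of $P[[t_1,\dots,t_r]]$, where the $t_i$ map to generators of $I$. Next I prove (6), starting with finite free modules, where it is clear, and then passing to a finite presentation $P^a\to P^b\to M\to 0$. Here I use that completion is exact on finite modules. That exactness comes from Artin--Rees, which makes the induced filtration $I^mM'\cap M$ on a submodule equivalent to the $I$-adic one, together with the Mittag-Leffler property for surjective systems. This yields (7), and flatness in (5) follows because $-\otimes_P P^\wedge$ is exact on finite modules, so $\operatorname{Tor}_1^P(P/J,P^\wedge)=0$ for all ideals $J$. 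Then (3) follows: $(I^mM)^\wedge=I^mM\otimes P^\wedge=I^mM^\wedge$ by (6) and flatness, and $M^\wedge/I^mM^\wedge=(M/I^mM)^\wedge=M/I^mM$. Item (2) follows from (3) for finite $M$. For arbitrary $M$ I would instead use the general fact, valid for finitely generated $I$, that $M^\wedge/I^mM^\wedge\cong M/I^mM$. I expect this to be the most delicate point, since it needs Stacks 05GG rather than Artin--Rees.

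The third block uses Krull. (9) is Krull's intersection theorem: by Artin--Rees, $N=\bigcap I^nM$ satisfies $IN=N$, and the determinant trick gives some $1+a$, $a\in I$, annihilating $N$. Applied to $M^\wedge$ over $P^\wedge$, using (2) and (4), it gives $\bigcap I^nM^\wedge=0$. For (10), elements of $1+I$ are units when $I\subset\rad P$, so $M\to M^\wedge$ is injective. Faithful flatness follows because $\fm P^\wedge\ne P^\wedge$ for each maximal $\fm$, by (3): $P^\wedge/\fm P^\wedge=P/\fm$ since $I\subset\fm$. Nakayama follows from (9) as well.

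Items (8) and (11) are limit arguments, and the last two are about local rings. For (8), given a $J$-Cauchy sequence, its images in $P/I$ converge; I then lift and correct step by step, using $I$-completeness and $I^m+J^n$-type cofinality, or alternatively use that $J\subset\rad(P)$ follows by (4) and $I$-completeness. For (11), I would compare the double limit $\varprojlim_n\varprojlim_m M/(I^m+J^n)M$ with the cofinal system $(I+J)^k$, applying (3) to $M^{\wedge,I}/J^nM^{\wedge,I}=M/(J^n+I^m)$-type identities. For (12), $P/\fm^m=P_\fm/\fm^m P_\fm$, and the structure follows from (3) and (4). For (13), $\operatorname{gr}_\fm P=\operatorname{gr}_\fm P^\wedge$ by (3), and both $\dim$ and the embedding dimension $\dim_k \fm/\fm^2$ are read off the graded ring, which gives the equivalence. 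The regularity claim in (12) follows by applying (13) to $P_\fm$.
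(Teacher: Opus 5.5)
Your treatment of items 1--7 and 9--13 is fine and follows the paper's route. The paper simply cites Stacks, Atiyah--Macdonald and Matsumura for items 1--11, including 05GG for items 2 and 3 with non-finite $M$, as you do. For 12 and 13 it gives the same arguments you give: $P/\fm^m=P_\fm/\fm^mP_\fm$, and dimension and embedding dimension are unchanged by completion.

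The problem is item 8, where the paper cites Stacks 0DYC and your sketch does not supply a proof. Your ``alternative'' is not an argument for completeness. It is true that $J\subset\rad(P)$, by item 4 applied to $P$ and to $P/I$. But via item 10 this only shows that $P\to P^{\wedge,J}$ is injective, not that it is surjective. For example, $p\in\rad(\Z_{(p)})$, yet $\Z_{(p)}$ is not $p$-adically complete. Your first route (``lift and correct step by step'') omits the two facts that make it work. Without them the corrections only converge $I$-adically, which says nothing about $J$-adic convergence in $P$.

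The two missing ingredients are these.
\begin{enumerate}
\item[(a)] Every $P/I^m$ is $J$-adically complete. Argue by induction on $m$ from $0\to I^m/I^{m+1}\to P/I^{m+1}\to P/I^m\to 0$. Here $I^m/I^{m+1}$ is a finite module over the $J$-complete Noetherian ring $P/I$, hence $J$-complete by item 6 over $P/I$. Since $J$-adic completion is exact on finite $P$-modules (item 7), the five lemma gives the induction step.
\item[(b)] $\bigcap_m(J^n+I^m)=J^n$ for every $n$. This is item 9 (or 10) applied to the finite module $P/J^n$, using $I\subset\rad(P)$ from item 4.
\end{enumerate}
With these, let $(x_k)$ be $J$-Cauchy, with $x_k-x_l\in J^n$ for $k,l\ge N(n)$. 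By (a) it has $J$-adic limits $y_m\in P/I^m$, which are compatible. They define $y\in P=\varprojlim P/I^m$ with $y-x_k\in J^n+I^m$ for all $m$ and all $k\ge N(n)$. By (b), $y-x_k\in J^n$, so $x_k\to y$. Separatedness, $\bigcap_nJ^n=0$, follows from $J\subset\rad(P)$.

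Equivalently, (a) gives $P=\varprojlim_m\varprojlim_n P/(I^m+J^n)=P^{\wedge,I+J}$. A Noetherian ring complete for $I+J$ is complete for the smaller ideal $J$, which can be shown using items 4, 10 and 11.
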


\begin{proof}
1. follows from \cite[Lemma 10.95.9, tag 0319]{Sta19}, 2. and 3. from \cite[Lemma 10.95.3, tag 05GG]{Sta19}, 4. from \cite[Lemma 10.95.6, tag 05GI]{Sta19}. 5. follows from \cite[Lemma 10.96.6, tag 0316 and Lemma 10.96.2, tag 00MB]{Sta19}, 6. and 7.  from \cite[Lemma 10.96.1, tag 00MA]{Sta19} and 8. from 
\cite[Lemma 10.96.10, tag 0DYC]{Sta19}.
9. follows from \cite[Theorem 10.17]{AM69},  and
10. from  \cite[Corollary 10.19]{AM69}
and \cite[Theorem 8.14 and Theorem 2.2]{Mat86}.
For 11. see  \cite[Section 10, Exercise 5]{AM69}.

For 12. let $\fm$ be maximal. Then $P_\fm/(\fm P_\fm)^n =(P/\fm^n)_\fm
= P/\fm^n$ since $P/\fm^n$ is already local with maximal ideal 
$\fm/\fm^n$. Hence $(P_\fm)^\wedge =P^\wedge$ is local and the residue fields satisfy 
$(P_\fm)^\wedge/\fm(P_\fm)^\wedge =P_\fm/\fm P_\fm =  \Quot (P/\fm) = P/\fm = P^\wedge/\fm P^\wedge$. 
The regularity of $P^\wedge$ follows from 13..

For 13. recall that a Noetherian local ring $(P,\fm)$ with residue field $k$ is regular if its embedding dimension $\dim_{k}\fm/\fm^2$ equals its dimension $\dim P$  (cf. Definition \ref{def.sing}). It follows that $P$ is regular iﬀ its completion is regular, since the dimension and embedding dimension remain the same after
completion.
\end{proof}

\begin{remark} \label{rm.compl}
{\em Although we are mainly interested in Noetherian rings, we like to mention that some of the above properties 1. - 12. hold more generally. Let $P$ be any ring and $M$ a $P$-module.\\
1. holds without any further assumption (\cite[Lemma 10.95.9, tag 0319]{Sta19}).\\
2. and 3. hold if the ideal $I\subset P$ is finitely generated ( \cite[Lemma 10.95.3, tag 05GG]{Sta19}).  4. generalizes to $IR^\wedge \subset \rad(P^\wedge)$ for any ideal $I$ by \cite[Lemma 10.95.6, tag 05GI]{Sta19}.\\
The following lemma is wrong if $P$ is not Noetherian.  The canonical map $M\otimes_PN^\wedge\to (M\otimes_PN)^\wedge$ is always surjective but in general not injective. For a counterexample see \cite[tag 05JD]{Sta19}.}
\end{remark}

\begin{lemma}\label{lem.lim}
Let $P$ be Noetherian, $I\subset P$ an ideal and ${}^\wedge$ the $I$-adic completion. Then
$$(M\otimes_PN)^\wedge = M^\wedge \otimes_PN = M\otimes_PN^\wedge$$
for finitely generated $P$-modules $M$ and $N$.
\end{lemma}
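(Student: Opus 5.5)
The plan is to reduce everything to Lemma \ref{lem.compl}.6, which says that for a finite $P$-module $L$ we have $L^\wedge = L\otimes_P P^\wedge$. By the symmetry of the tensor product it is enough to prove $(M\otimes_PN)^\wedge = M\otimes_P N^\wedge$; the equality with $M^\wedge\otimes_P N$ then follows by exchanging the roles of $M$ and $N$.

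First, $M\otimes_P N$ is a finite $P$-module, since it is generated by the finitely many elements $m_i\otimes n_j$ built from generators of $M$ and $N$. Since $P$ is Noetherian, Lemma \ref{lem.compl}.6 applies and gives
$$(M\otimes_PN)^\wedge \cong (M\otimes_P N)\otimes_P P^\wedge .$$
Second, associativity of the tensor product yields
$$(M\otimes_P N)\otimes_P P^\wedge \cong M\otimes_P (N\otimes_P P^\wedge).$$
Third, applying Lemma \ref{lem.compl}.6 again, this time to the finite module $N$, we get $N\otimes_P P^\wedge \cong N^\wedge$. Combining the three isomorphisms gives $(M\otimes_PN)^\wedge \cong M\otimes_P N^\wedge$.

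The only point needing care is that the composite isomorphism agrees with the canonical map $M\otimes_P N^\wedge\to (M\otimes_P N)^\wedge$ mentioned in Remark \ref{rm.compl}. That map sends $m\otimes (n_k)_k$ to $(m\otimes n_k)_k$. I would check this on elements, using that the isomorphism of Lemma \ref{lem.compl}.6 is the natural one, $x\otimes a\mapsto a\cdot\iota(x)$, where $\iota$ is the map to the completion.

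I expect no real obstacle. The substantive input, exactness of completion and flatness of $P^\wedge$ over Noetherian $P$, is already contained in Lemma \ref{lem.compl}.5–6. The Noetherian hypothesis enters precisely there; this is consistent with the counterexample cited in Remark \ref{rm.compl}.
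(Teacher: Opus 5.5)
Your proof is correct and is essentially the paper's argument: apply Lemma \ref{lem.compl}.6 to the finite module $M\otimes_P N$, rearrange by associativity and commutativity of the tensor product, and apply Lemma \ref{lem.compl}.6 again to $N$ (resp.\ $M$). Your extra check that the composite isomorphism is the canonical map $m\otimes(n_k)_k\mapsto(m\otimes n_k)_k$ is a refinement the paper omits, and it is not needed for the equalities as stated.
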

\begin{proof}
By Lemma \ref{lem.compl}.6., since  $M\otimes_PN$ is finite over $P$,
$(M\otimes_PN)^\wedge 
= (M\otimes_PN)\otimes_PP^\wedge = (M\otimes_PP^\wedge)\otimes_PN=  M\otimes_P(N\otimes_PP^\wedge)$. Now apply 
 Lemma \ref{lem.compl}.6. again.
\end{proof}

  We are interested in families of completed rings or modules over the spectrum of an arbitrary Noetherian ring $A$.  Before we consider families with completed fibers, we consider first the case of ordinary fibers.

 Let $A  \to R$ be a ring map and $M$ an $R$-module. For a prime ideal  $\fp$ of $A$ let $k(\fp) := A_\fp/\fp A_\fp = \Quot(A/\fp)$  be the residue field of $\fp$. 
We set
$$
\begin{array}{lll}
R_\fp &:= R \otimes_{A} A_\fp, & R(\fp) := R \otimes_{A} k(\fp),\\
M_\fp & := M \otimes_{A} A_\fp, & M(\fp):= M \otimes_{A} k(\fp), 
\end{array}
$$
and call $M_\fp$  the {\em stalk} of $M$ over $\fp$ and 
$M(\fp)$ the {\em fiber} of $M$ over $\fp$. $M_\fp$ is the localization of the $A$-module $M$ at $\fp$ (\cite[Proposition 3.5]{AM69}),  and $M(\fp) 
= M_\fp \otimes_{A_\fp} A_\fp/\fp A_\fp
=M_\fp/\fp M_\fp$.
$M_\fp$ is an  $R_\fp $-module and 
$M(\fp)$ an $R(\fp) $-module, which is  also a $k(\fp) $-vector space via
$k(\fp) \to R(\fp)$. We call 
$$d_{\fp} (M):= \dim_{k(\fp)} M(\fp)$$
{\em the fiber dimension of $M$ over} $\fp$. We remark that $d_\fp(M) <\infty$ if $M$  is finitely generated as $A$-module.  In this case the semicontinuity of $d_{\fp} (M)$ holds and is well known (see lemma \ref{lem.free}). We are however interested in the case when $M$ is finite over $R$ but not over $A$ (see e.g. Example \ref{ex.notsemcont}).

\begin{remark} \em{In our applications the fiber $M(\fp)$ is not the right object to consider if $\fp$ is not a maximal ideal. For example, let $A=K[t]$, $K$ a field, $M=R=A[[x]]=A[x]^{\wedge,\x}$ and $\fp=\langle 0\rangle$. Then the fiber $R(\fp) = A[[x]](\langle 0\rangle)=(K[t])[[x]] \otimes_{K[t]}K(t)$ is  neither local nor complete, while we are interested in the complete local ring $A(\langle 0\rangle)[[x]]= K(t)[[x]]$, which is the $\x$-adic completion of $A[[x]](\langle 0\rangle)$. That is, the right object to consider is not the fiber $M(\fp)$ but the completed fiber
(in this case $M(\fp)^{\wedge,\x}$), which we define now.}
\end{remark}

\begin{definition} \label{def.cf} Let  $A  \to R$ ba a ring map, $I\subset R$ an ideal, $M$ an $R$-module,
and $\fp$ a prime ideal   of $A$. We define
$$
\begin{array}{lll}
\hat R_\fp &:= (R_\fp)^{\wedge,I},  & \hat R(\fp) := R(\fp)^{\wedge,I}\\
\hat M_\fp & :=(M_\fp)^{\wedge,I} &
\hat M(\fp):= M(\fp)^{\wedge,I}  
\end{array}
$$
and call $\hat M_\fp$  the {\em completed stalk} of $M$ over $\fp$ and 
$\hat M(\fp)$ the {\em completed fiber} of $M$ over $\fp$.         
$\hat M_\fp$ is an  $\hat R_\fp$-module and 
$\hat M(\fp)$ an $\hat R(\fp)$-module, which is  also a $k(\fp) $-vector space. We call 
$$\hat d_{\fp} (M):= \dim_{k(\fp)} \hat M(\fp)$$
{\em the completed fiber dimension of $M$ over} $\fp$. 
\end{definition}

\bigskip

To put the individual completions $\hat M(\fp)$ into a family over $\Spec A$, we introduce the completed tensor product.

\begin{definition}\label{def.ctp}
Let  $A \to R$ be a ring map, $I\subset R$ an ideal, and  $B$ an $A$-algebra . 
We define  the {\em ($I$-adic) completed tensor product} of $R$  and $B$ over $A$ as the ring
$$R \hat\otimes_A B :=  \lim\limits_{\longleftarrow}\big((R/I^mR) \otimes_A B \big).$$
If  $M$ is an $R$-module  and $N$ a $B$-module, we define the {\em ($I$-adic) completed tensor product} of  $M$  and $N$ over $A$ as the $R \hat\otimes_A B$-module 
$$M \hat\otimes_A N :=  \lim\limits_{\longleftarrow}\big((M/I^m M)\otimes_A N\big).$$
\end{definition}

In the following we collect the most important properties of the completed tensor product. 

\begin{proposition}\label{prop.pctp} Let  $A \to R$ be a ring map,
$I\subset R$ an ideal, $B$ an $A$-algebra, $M$ an $R$-module, and $N$ a $B$-module.
The $I$-adic completed tensor product has the following properties.
\begin{enumerate}
\item
$R \hat\otimes_A B = (R \otimes_A B)^{\wedge}$,
$M \hat\otimes_A N = (M \otimes_A N)^{\wedge}$.\\
 If $I$ is finitely generated, then 
$M \hat\otimes_A N= M^\wedge  \hat\otimes_A N$.
In particular,
$$ M\hat \otimes_A A_\fp =(M^\wedge)\hat\otimes_A A_\fp, \quad  M\hat \otimes_A k(\fp) =(M^\wedge)\hat\otimes_A k(\fp).$$
\item 
Let $A$, $B$ and $R$ be Noetherian. If $M$ is finitely generated over $R$ and $N$ is finitely genrated over $B$, then
$$
\begin{array}{lll}
M \hat\otimes_A N &\cong (M  \otimes_A N) \otimes_{R\otimes_A B}(R\otimes_A B)^{\wedge,IR \otimes_A B}\\
&= (M  \otimes_A N) \otimes_{R\otimes_A B}(R\hat\otimes_A B).
\end{array}
$$  
\item Let $A$ and $R$ be Noetherian and $I \subset \rad(R)$. 
Then the canonical map
 $M \otimes_A N \to M \hat\otimes_A N$  is injective if $M$ is finite over $R$ and $N$ finite over $A$.
\medskip
\item If $I^m M =0$ for some integer $m$  then
$M \hat\otimes_A N = M \otimes_A N$ for every $A$-module $N$. 
 \end{enumerate}
\end{proposition}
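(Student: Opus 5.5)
The four parts are handled in order, each reduced to Lemma \ref{lem.compl} or Lemma \ref{lem.lim}. Throughout, the completion of $R\otimes_A B$ (and of its modules) is taken with respect to the ideal $I(R\otimes_A B)$, the image of $I\otimes_A B$.

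\emph{Part 1.} The key identity is
\[
(R/I^m)\otimes_A B \cong (R\otimes_A B)/I^m(R\otimes_A B),
\]
which holds by right exactness of $-\otimes_A B$. Passing to the inverse limit gives $R\hat\otimes_A B=(R\otimes_A B)^\wedge$. Likewise $(M/I^mM)\otimes_A N\cong (M\otimes_A N)/I^m(M\otimes_A N)$, which gives $M\hat\otimes_A N=(M\otimes_A N)^\wedge$. For the second claim, let $I$ be finitely generated. Then $M^\wedge/I^mM^\wedge = M/I^mM$ by Lemma \ref{lem.compl}.3; this part of the lemma holds for finitely generated $I$ by Remark \ref{rm.compl}. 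Substituting into the definition yields $M^\wedge\hat\otimes_A N= M\hat\otimes_A N$. The displayed special cases are obtained by taking $N=A_\fp$ and $N=k(\fp)$.

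\emph{Part 2.} We first need $R\otimes_A B$ to be Noetherian. This is the main obstacle: the statement as written does not guarantee it, and a tensor product of Noetherian rings over a field can fail to be Noetherian. I would add or verify this hypothesis from the context, for instance by assuming $B$ is essentially of finite type over $A$, or that $B$ is a localization or quotient of $A$ (e.g.\ $A_\fp$ or $k(\fp)$), which covers the applications. Granting it, $M\otimes_A N$ is a finite $R\otimes_A B$-module. Lemma \ref{lem.compl}.6 then gives
\[
(M\otimes_A N)^\wedge=(M\otimes_A N)\otimes_{R\otimes_A B}(R\otimes_A B)^\wedge ,
\]
and combining this with part 1 proves part 2.

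\emph{Part 3.} Here $N$ is finite over $A$, so $M\otimes_A N$ is a finite $R$-module: it is a quotient of $M^{r}$ if $N$ is generated by $r$ elements. Its $I$-adic completion as an $R\otimes_A B$-module coincides with its $I$-adic completion as an $R$-module, since $I^m(M\otimes_A N)$ is the same submodule in both cases. Because $R$ is Noetherian and $I\subset\rad(R)$, Lemma \ref{lem.compl}.10 shows that $M\otimes_A N\to (M\otimes_A N)^\wedge$ is injective. By part 1 this completion is $M\hat\otimes_A N$, which proves part 3.

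\emph{Part 4.} If $I^mM=0$, then $I^k(M\otimes_A N)=0$ for all $k\ge m$. The inverse system $\{(M\otimes_A N)/I^k(M\otimes_A N)\}$ is therefore eventually constant, equal to $M\otimes_A N$, so its limit is $M\otimes_A N$. By part 1 this limit is $M\hat\otimes_A N$.
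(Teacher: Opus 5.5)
Your proof is essentially the paper's. Part 1 uses $(M/I^mM)\otimes_A N=(M\otimes_A N)/I^m(M\otimes_A N)$ together with Lemma \ref{lem.compl}.3 and Remark \ref{rm.compl}; part 2 uses Lemma \ref{lem.compl}.6 over $R\otimes_A B$; part 3 uses Lemma \ref{lem.compl}.10, which you apply directly to the finite $R$-module $M\otimes_A N$ where the paper detours through part 2 with $B=A$; and part 4 follows from the definition.

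Your caveat in part 2 is justified. The paper's proof simply asserts that $R\otimes_A B$ is Noetherian, which does not follow from the hypotheses (e.g.\ $\C\otimes_{\Q}\C$ is not Noetherian). The hypothesis you add is exactly what the argument needs, and it holds in the cases the paper relies on later ($B=A$, $S^{-1}A$, $A_\fp$, $k(\fp)$).
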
 

\begin{example}\label{ex.pctp}{\em
In particular, let $A$, $B$ be Noetherian, $R=A[x]/J$, $J\subset A[x]$ an ideal, and  $I\subset R$ an ideal. Then  statement 1. and Lemma \ref{lem.compl}.6. imply ($^\wedge$ the $I$-adic completion):
$$(A[x]/J)\hat \otimes_AB =((A[x]/J) \otimes_AB)^\wedge=(B[x]/JB[x])^{\wedge}=B[x]^\wedge/JB[x]^{\wedge}.$$
E.g.  for  $R=A[x]$ and $B=k(\fp)$ we get $A[x]\hat \otimes_A k(\fp)=k(\fp)[x]^{\wedge}$ and if $I = \langle x_1,\ldots,x_n\rangle$,
then $A[[x]]\hat \otimes_AB=B[[x]]$ and $A[[x]]\hat \otimes_Ak(\fp)=k(\fp)[[x]]$.}
\end{example}

\begin{proof}[Proof of Proposition \ref{prop.pctp}]
1. The equality  $M \hat\otimes_A N = (M \otimes_AN)^{\wedge}$ follows from  
$$(M/I^mM)\otimes_AN = (M\otimes_A N)/I^m ({M\otimes_A N}).$$
Moreover, since $M/I^m M = M^\wedge/I^m M^\wedge$ by Lemma \ref{lem.compl} and Remark \ref{rm.compl}
 we have also
$M^\wedge  \hat\otimes_A N =\lim\limits_{\longleftarrow}\big((M^\wedge/I^m M^\wedge)\otimes_A N\big) = M\hat\otimes_A N.$

2. Since $M$ is  finitely generated over $R$ and $N$ is finitely generated over $B$, 
$M\otimes_A N$ is finitely generated over the Noetherian ring $R\otimes_A B$.
Using item 1,  
Proposition \ref{lem.compl}.6. gives the isomorphism.

3.  If
$M$ is finitely generated over $R$ and $N$ is finitely generated over $A$ then $M\otimes_A N$ is finitely generated over $R =R\otimes_A A$. 
The injectivity follows from 1., from 2. (with $B=A$), and from Lemma \ref{lem.compl}.10.

4.  Follows from the  definition of the completed tensor product.
 \end{proof}

 In the following corollaries \ref{cor.cpl} -- \ref{cor.fitt}  we use the notations  of Proposition \ref{prop.pctp} and assume that 
$A$, $B$, $R$ are Noetherian rings and  $M$ (resp. $N$) are finite $R$ (resp. $A$) modules. 
 \begin{corollary}\label{cor.cpl} 
We have:
 \begin{enumerate}
\item[(i)] 
 $R\hat\otimes_A A = R^\wedge$, $M\hat\otimes_A A =M^{\wedge}$.
 \medskip
 \item[(ii)]  $\hat M_\fp =M\hat\otimes_A A_\fp$ and $\hat M(\fp) = M\hat\otimes_A k(\fp)$ for $\fp \in \Spec A$. 
\medskip
 \item [(iii)] If $S$ be multiplicatively closed in $A$, then 
 $M\hat\otimes_A (S^{-1}A) = (S^{-1}M)^{\wedge,IS^{-1}R}$.
 \medskip
 \item[(iv)] If $R$ is $I$-adically complete, then $M$ is I-adically complete. 
\end{enumerate}
\end{corollary}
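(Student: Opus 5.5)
All four items follow directly from Proposition \ref{prop.pctp}.1, which gives $M\hat\otimes_A N = (M\otimes_A N)^\wedge$ with ${}^\wedge$ the $I$-adic completion. The plan is to specialize $N$ in each case and then use the elementary identities for base change of tensor products.

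For (i), take $N=A$. Then $M\otimes_A A = M$, so $M\hat\otimes_A A = M^\wedge$, and likewise $R\hat\otimes_A A = R^\wedge$.

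For (iii), take $N = S^{-1}A$. Since $M\otimes_A S^{-1}A = S^{-1}M$, we get $M\hat\otimes_A S^{-1}A = (S^{-1}M)^{\wedge,I}$. By the convention of the first section, the $I$-adic completion of an $S^{-1}R$-module equals its completion with respect to the extended ideal $IS^{-1}R$. This is because $(S^{-1}M)/I^m S^{-1}M = (S^{-1}M)/(IS^{-1}R)^m S^{-1}M$, so the two inverse systems coincide.

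For (ii), the first equality is the case $S = A\setminus \fp$ of (iii), which gives $\hat M_\fp = (M_\fp)^{\wedge,I} = M\hat\otimes_A A_\fp$ directly from Definition \ref{def.cf}. For the second, take $N = k(\fp)$. Then $M\otimes_A k(\fp) = M(\fp)$, so $M\hat\otimes_A k(\fp) = M(\fp)^{\wedge,I} = \hat M(\fp)$. The same argument with $R$ in place of $M$ gives the ring statements.

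For (iv), the natural route is Lemma \ref{lem.compl}.6. Since $R$ is Noetherian and $M$ is finite over $R$, we have $M^\wedge = M\otimes_R R^\wedge$. By hypothesis $R\to R^\wedge$ is an isomorphism, so $M\to M\otimes_R R^\wedge = M^\wedge$ is an isomorphism. One should check that this isomorphism is the canonical map $x\mapsto (x \bmod I^mM)_m$. This holds because the identification in Lemma \ref{lem.compl}.6 is induced by $x\otimes r\mapsto r\cdot\iota(x)$, which restricts to $\iota$ on $x\otimes 1$.

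None of these steps is hard. The only points needing care are the compatibility of the canonical maps in (iv) and the change-of-ring identification of completions in (iii); both are bookkeeping rather than genuine obstacles.
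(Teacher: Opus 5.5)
Your proposal is correct and follows the paper's proof: (i) and (ii) come from Proposition~\ref{prop.pctp}.1 with $N=A$, $A_\fp$, $k(\fp)$, and (iv) comes from Lemma~\ref{lem.compl}.6. The only deviation is in (iii): the paper cites Proposition~\ref{prop.pctp}.2 with $N=B=S^{-1}A$, whereas you use part 1 together with the fact that the $I$-adic and $IS^{-1}R$-adic filtrations on $S^{-1}M$ coincide, which is slightly more direct and needs no finiteness hypotheses.
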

\begin{proof}
Proposition \ref{prop.pctp}.1  implies (i) and (ii) with $B=A$ and $N=A_\fp$ resp. $N=k(\fp) = A_\fp/\fp$ by definition of 
$\hat M_\fp$ and $\hat M(\fp)$, while  
(iii)  follows from  Proposition \ref{prop.pctp}.2. with $N=B=S^{-1}A$, and (iv) follows from  Lemma \ref{lem.compl}.6..
  \end{proof}

\begin{corollary}\label{cor.rex}
The completed tensor product is right-exact on the category of finitely generated modules. That is, let  
$$
\begin{array}{*{20}l}
M' \to M \to  M'' \to 0 ,  \text{resp.}\\
N' \to N \to N'' \to 0
\end{array}
$$
be exact sequences with $M', M, M''$ finitely generated  $R$-modules and $N', N, N''$ finitely generated   $B$-modules. Then the sequences of finitely generated  $R \hat\otimes_A B$-modules
$$
\begin{array}{*{20}l}
M' \hat\otimes_A N\to M\hat\otimes_A N\to M'' \hat\otimes_A N\to 0,\text{resp.}\\
M \hat\otimes_A N'\to M\hat\otimes_A N\to M\hat\otimes_A N''\to 0\\
\end{array}
$$
are exact. 
\end{corollary}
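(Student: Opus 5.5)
Since all modules involved are finitely generated, the natural route is to reduce the completed tensor product to an ordinary tensor product followed by an ordinary completion, using Proposition \ref{prop.pctp}.2. For finitely generated $M$ over $R$ and $N$ over $B$, that result gives
$$M\hat\otimes_A N\cong (M\otimes_A N)\otimes_{R\otimes_A B}(R\hat\otimes_A B),$$
where $R\hat\otimes_A B=(R\otimes_A B)^{\wedge,I(R\otimes_A B)}$. The functor $-\hat\otimes_A N$ on finite $R$-modules therefore factors as the composite of two functors. The first is $F_1\colon M\mapsto M\otimes_A N$, from finite $R$-modules to finite $R\otimes_A B$-modules. The second is $F_2\colon L\mapsto L\otimes_{R\otimes_A B}(R\hat\otimes_A B)$. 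The argument for the second variable $N$ is symmetric, with $N\mapsto M\otimes_A N$.

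The proof then has three steps.
\begin{enumerate}
\item $F_1$ is right exact, since the ordinary tensor product is. It carries finite $R$-modules to finite $R\otimes_A B$-modules: if $R^k\to M$ is surjective, then $(R\otimes_A B)^k\to M\otimes_A N$ is surjective in the case $N=B$, and finiteness of $N$ over $B$ handles the general case. So the sequence $M'\otimes_A N\to M\otimes_A N\to M''\otimes_A N\to 0$ is exact and consists of finite $R\otimes_A B$-modules. The same holds for $M\otimes_A N'\to M\otimes_A N\to M\otimes_A N''\to 0$.
\item $F_2$ is base change along the ring map $R\otimes_A B\to R\hat\otimes_A B$. Base change is right exact, and it is even exact because completion of a Noetherian ring is flat by Lemma \ref{lem.compl}.5. (Alternatively one can invoke Lemma \ref{lem.compl}.6 and 7 directly: completion is exact on finite modules.) This requires $R\otimes_A B$ to be Noetherian.
\item Compose the two functors. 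Apply $F_2$ to the exact sequences from step 1 and use the isomorphism of Proposition \ref{prop.pctp}.2 to identify the terms with $M'\hat\otimes_A N$, and so on. The result is the required exact sequences. The resulting modules are finite over $R\hat\otimes_A B$, since they are base changes of finite modules.
\end{enumerate}

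The main obstacle is the Noetherianity of $R\otimes_A B$, which Proposition \ref{prop.pctp}.2 implicitly uses; a tensor product of Noetherian algebras need not be Noetherian in general. This is harmless in the intended applications, where $B=A_\fp$, $B=k(\fp)$, or $B$ is finite over $A$. In the standing hypotheses of the corollaries one should therefore either take $N$ finite over $A$, in which case $R\otimes_A B$ is a quotient-like finite extension of $R$, or assume that $R\otimes_A B$ is Noetherian as in Proposition \ref{prop.pctp}.2.

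A second point needs checking: the isomorphism in Proposition \ref{prop.pctp}.2 must be natural in $M$ and $N$, so that the maps in the sequences are the induced maps on completed tensor products. This follows because both sides are built functorially from the canonical map $L\to L^\wedge$.
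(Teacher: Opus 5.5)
Your argument is correct and is the paper's proof: right-exactness of the ordinary tensor product, then base change along $R\otimes_A B\to R\hat\otimes_A B$ (which is right-exact without any flatness), then the identification of Proposition \ref{prop.pctp}.2. Your caveat is fair: that identification tacitly requires $R\otimes_A B$ to be Noetherian, which the paper assumes without comment. This does hold in the cases used ($B=A_\fp$, $B=k(\fp)$, or $B$ finite over $A$), but note that it is finiteness of $B$ over $A$, not of $N$, that makes $R\otimes_A B$ finite over $R$.
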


\begin{proof}
The sequences 
 $ M'\otimes_A N \to M\otimes_A N\to M'' \otimes_A N\to 0$ and 
  $ M\otimes_A N' \to M\otimes_A N\to M\otimes_A N''\to 0$ 
are exact. Now tensor these sequences with $R\hat\otimes_A B$ over 
$R\otimes_A B$  and apply Proposition \ref{prop.pctp}.2. 
 \end{proof}

 \begin{corollary}\label{cor.maxA} We have:
\begin{enumerate}
\item [(1)]  $\hat M(\fp)= \hat M_\fp /\fp \hat M_\fp$. 
 In particular
$\hat R(\fp)= \hat R_\fp /\fp \hat R_\fp$.\\
If $I \subset \fp R_\fp$ then $\hat R(\fp) =  R(\fp)$.
\item [(2)]  Let $\fp$ be a maximal ideal.  Then
 $$ \hat M(\fp) =  M(\fp)^\wedge = M^\wedge (\fp) =M^\wedge/\fp M^\wedge.$$ In particular, if $R$ is complete, then  $ \hat M(\fp) = M(\fp)$
and hence  $\hat d_\fp (M) = d_\fp (M). $
\end{enumerate}
\end{corollary}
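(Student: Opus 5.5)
For (1) I will use Corollary \ref{cor.cpl}(ii), which gives $\hat M(\fp)=M\hat\otimes_A k(\fp)$ and $\hat M_\fp = M\hat\otimes_A A_\fp$, and then pass from $A$ to $A_\fp$. Since $k(\fp)=A_\fp\otimes_{A_\fp}k(\fp)$, the first step is
$$M(\fp)=M_\fp\otimes_{A_\fp}k(\fp)=M_\fp/\fp M_\fp ,$$
with all modules finite over the Noetherian ring $R_\fp$. I apply Lemma \ref{lem.compl}.6 to the ring $R_\fp$ with ideal $IR_\fp$ and the finite module $M_\fp/\fp M_\fp$. This gives
$$\hat M(\fp)=(M_\fp/\fp M_\fp)\otimes_{R_\fp}\hat R_\fp .$$
By right exactness of tensor products, this equals $\hat M_\fp/\fp\hat M_\fp$, because $M_\fp\otimes_{R_\fp}\hat R_\fp=\hat M_\fp$, again by Lemma \ref{lem.compl}.6. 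Taking $M=R$ gives the statement for $\hat R(\fp)$.

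For the last claim of (1), assume $I\subset \fp R_\fp$. Then $I^m R(\fp)=0$ for $m\ge 1$, since $\fp$ kills $R(\fp)$. Proposition \ref{prop.pctp}.4 then gives $R\hat\otimes_A k(\fp)=R\otimes_A k(\fp)$. Equivalently, $R(\fp)/I^mR(\fp)=R(\fp)$ for every $m$, so the inverse limit is constant.

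For (2), let $\fp$ be maximal, so $k(\fp)=A/\fp$. First, $\hat M(\fp)=M(\fp)^\wedge$ holds by definition. Next,
$$M(\fp)=M\otimes_A A/\fp=M/\fp M ,$$
which is a finite $R$-module. Lemma \ref{lem.compl}.6 with $J=\fp R$ then gives
$$(M/\fp M)^\wedge=M^\wedge/\fp M^\wedge .$$
This is $M^\wedge\otimes_A k(\fp)=M^\wedge(\fp)$. One could alternatively use Proposition \ref{prop.pctp}.1, $M\hat\otimes_A k(\fp)=M^\wedge\hat\otimes_A k(\fp)$, but the direct route above suffices. If $R$ is complete, then $M$ is complete by Corollary \ref{cor.cpl}(iv), so $M^\wedge=M$ and $\hat M(\fp)=M(\fp)$. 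Taking $k(\fp)$-dimensions gives $\hat d_\fp(M)=d_\fp(M)$.

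The only delicate point is the bookkeeping in (1). We must make sure that completing $R_\fp$-modules with respect to $IR_\fp$ agrees with the $I$-adic completion used in Definition \ref{def.cf}. This holds because the quotients $M/I^m M$ are the same whether $M$ is viewed over $R$ or over $R_\fp$.
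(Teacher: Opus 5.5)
Your proof is correct and is essentially the paper's argument with Corollary~\ref{cor.rex} unpacked. The paper applies the right exact functor $M\hat\otimes_A -$ to $\fp A_\fp\to A_\fp\to k(\fp)\to 0$ (resp.\ to $\fp\to A\to A/\fp\to 0$ when $\fp$ is maximal), and that corollary is itself proved by Lemma~\ref{lem.compl}.6 over $R\otimes_A B$ (here $R_\fp$ resp.\ $R$) together with right exactness of $\otimes$, which is what you do by hand. The one slip is citing Proposition~\ref{prop.pctp}.4 with $M=R$, since its hypothesis $I^mR=0$ fails in general; either apply it to the $R$-module $R(\fp)$ (with $N=A$), or rely on your direct observation that the inverse system $R(\fp)/I^mR(\fp)=R(\fp)$ is constant.
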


\begin{proof}
We apply $M \hat\otimes_A -$ to the exact sequence  $\fp A_\fp \to A_\fp \to k(\fp) \to 0$  of finitely generated  $A_\fp$-modules ($A_\fp$ is Noetherian since $A$ is). 
Corollary \ref{cor.rex} (second sequence with $N=B=A_\fp$) and Corollary \ref{cor.cpl} (ii) gives the first line of (1).
If $I \subset \fp$ then $IR_\fp \subset \fp R_\fp$ and 
$\hat R_\fp /\fp \hat R_\fp = \hat R_\fp /\fp \hat R_\fp + I \hat R_\fp = (\hat R_\fp / I \hat R_\fp)/\fp \hat R_\fp (\hat R_\fp / I \hat R_\fp)= R_\fp / I R_\fp +\fp R_\fp =R(\fp).$

(2) If $\fp$ is maximal, $k(\fp) = A/\fp$ and 
Corollary \ref{cor.rex} (with $N=B =A$) implies
 $\hat M(\fp) = M \hat \otimes_A A/\fp  = (M \hat\otimes_A A)/ \fp (M \hat\otimes_A A) = M^\wedge/\fp M^\wedge=M^\wedge(\fp).$ 
If $R$ is complete, then $M^\wedge=M$ by Corollary \ref{cor.cpl} (iv) and hence $M^\wedge(\fp)=M(\fp)$.
\end{proof}

For $R$ Noetherian and complete and $\fp$ maximal we have equality of the completed fiber 
$\hat R(\fp)$ and the usual fiber $R(\fp)$ by Corollary \ref{cor.maxA}. For a non-maximal prime $\fp \in A$ however, 
the usual fiber and the completed fiber may be completely different (see Example \ref{ex.sem0}). 
\medskip

From the first sequence of Corollary \ref{cor.rex}  we get
\begin{corollary}\label{cor.pres}
Let  $ R^p \xrightarrow{\text {$T$}} R^q \to  M \to 0$ be an  $R$-presentation of $M$.
Then 
$$ (R \hat\otimes_A B)^p \xrightarrow{\text {$T$}} (R \hat\otimes_A B)^q \to  M \hat\otimes_A B\to 0$$
 is an  $R \hat\otimes_A B$-presentation of $M\hat\otimes_A B$.
 In particular,
$\hat M(\fp) = \coker(\hat R(\fp)^p \xrightarrow{\text {$T(\fp)$}}\hat R(\fp)^q)$.
\end{corollary}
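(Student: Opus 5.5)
We apply the first exact sequence of Corollary \ref{cor.rex} to the given presentation $R^p \xrightarrow{T} R^q \to M \to 0$. All three modules $R^p$, $R^q$, $M$ are finitely generated $R$-modules, and $B$ is finitely generated as a module over itself. Corollary \ref{cor.rex} (with $N=B$) therefore yields an exact sequence
$$R^p\hat\otimes_A B \to R^q\hat\otimes_A B \to M\hat\otimes_A B \to 0$$
of $R\hat\otimes_A B$-modules.

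Next we identify the first two terms. By Proposition \ref{prop.pctp}.1 we have $R^q\hat\otimes_A B = (R^q\otimes_A B)^\wedge$. Here $R^q\otimes_A B=(R\otimes_A B)^q$, and $I$-adic completion commutes with finite direct sums, since both inverse limits and the quotients $-/I^m-$ commute with finite direct sums. Hence $R^q\hat\otimes_A B=(R\hat\otimes_A B)^q$, and likewise for $p$.

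We then check that the induced map is the matrix $T$. The map $R^p\to R^q$ is given by the matrix $T$ with entries in $R$. After applying $-\otimes_A B$ and completing, it is given by the images of these entries under the natural ring map $R\to R\hat\otimes_A B$. This holds because the completed tensor product is functorial and compatible with the ring maps $R\to R\otimes_A B\to (R\otimes_A B)^\wedge$. This proves the first assertion.

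For the statement about fibres, we take $B=k(\fp)$. The completed tensor product is then $R\hat\otimes_A k(\fp)=\hat R(\fp)$ and $M\hat\otimes_A k(\fp)=\hat M(\fp)$ by Corollary \ref{cor.cpl}(ii). The matrix becomes $T(\fp)$, the image of $T$ under $R\to\hat R(\fp)$, so $\hat M(\fp)=\coker\bigl(\hat R(\fp)^p\xrightarrow{T(\fp)}\hat R(\fp)^q\bigr)$.

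One point needs care: Corollary \ref{cor.rex} is stated for $B$ Noetherian, and $k(\fp)$ is a field, hence Noetherian. The only real step is the invocation of right exactness; everything else is bookkeeping. We do not expect a genuine obstacle beyond checking the compatibility of the direct-sum identification with the maps.
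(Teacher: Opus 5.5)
Your proposal is correct and is exactly the paper's argument. The paper obtains the corollary directly from the first exact sequence of Corollary \ref{cor.rex}, and your remaining steps are only the details the paper leaves implicit: the identification $R^q\hat\otimes_A B=(R\hat\otimes_A B)^q$, the check that the induced map is still $T$, and the specialization to $B=k(\fp)$ via Corollary \ref{cor.cpl}(ii).

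One small refinement to your remark on hypotheses: what the right-exactness actually uses, through Proposition \ref{prop.pctp}.2 and Lemma \ref{lem.compl}.6, is that $R\otimes_A B$ is Noetherian. For $B=k(\fp)$ this holds because $R\otimes_A k(\fp)=R_\fp/\fp R_\fp$.
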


The following corollary says that "Fitting ideals commute with completed base change".

\begin{corollary} \label{cor.fitt}
Let $A \to R$ be a ring map and
$$ R^p \xrightarrow{\text {$T$}} R^q \to M \to 0$$
a presentation of the  $R$-module $M$.  The 
ideal generated by all $q-j$ minors of $T$ is denoted by $F_j(M)$ and called the {\em j-th  Fitting ideal} of M. Then, for any ring map $A \to B$,
$$ F_j(M \hat\otimes_A B) =  F_j(M) \cdot R\hat\otimes_A B. $$
In particular, for $\fp \in \Spec A$  and $B = k(\fp)$ we get
$ F_j(\hat M(\fp)) =  F_j(M) \cdot \hat R(\fp)$.
\end{corollary}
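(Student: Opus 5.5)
The plan is to reduce the statement to the classical fact that Fitting ideals commute with ordinary base change, and to get the completed version from Corollary \ref{cor.pres}. Write $\hat R_B := R\hat\otimes_A B$. Corollary \ref{cor.pres} says that the given presentation $R^p \xrightarrow{T} R^q \to M \to 0$ induces a presentation
$$ \hat R_B^{\,p} \xrightarrow{T} \hat R_B^{\,q} \to M\hat\otimes_A B \to 0 $$
of the $\hat R_B$-module $M\hat\otimes_A B$. Here the matrix is the image of $T$ under the canonical ring map $R \to R\otimes_A B \to \hat R_B$. (Strictly, the corollary applies with $B$ Noetherian, as in the standing hypotheses; for general $B$ one instead invokes right exactness of $-\otimes_R \hat R_B$ applied to the ordinary presentation, as indicated below.)

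First I would use that Fitting ideals are independent of the chosen presentation. This is the standard lemma: any two finite presentations are related by elementary operations and by adding free summands with identity blocks, and these operations leave the ideals of $(q-j)$-minors unchanged. Since the paper defines $F_j$ via a presentation, this lemma is also what makes $F_j(M\hat\otimes_A B)$ well defined. So $F_j(M\hat\otimes_A B)$ may be computed from the displayed presentation.

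Second, that presentation is given by the matrix $T$ with entries mapped into $\hat R_B$. Its $(q-j)$-minors are therefore exactly the images of the $(q-j)$-minors of $T$, because taking minors is a polynomial expression in the entries and hence commutes with ring homomorphisms. Consequently the ideal they generate is $F_j(M)\cdot \hat R_B$, which is the claim.

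The special case $B=k(\fp)$ follows from Corollary \ref{cor.cpl}(ii), which identifies $\hat M(\fp)=M\hat\otimes_A k(\fp)$ and $\hat R(\fp)=R\hat\otimes_A k(\fp)$; also $k(\fp)$ is Noetherian. For arbitrary $B$ I would observe that Proposition \ref{prop.pctp}.2 gives $M\hat\otimes_A B = M\otimes_R \hat R_B$. Then right exactness of $-\otimes_R \hat R_B$ yields the same presentation without any Noetherian hypothesis on $B$.

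The only genuinely nontrivial step is the invariance of Fitting ideals under change of presentation, and I would simply cite it as standard (e.g.\ Eisenbud, Cor.~20.4) rather than reprove it.
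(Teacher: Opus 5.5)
Your argument is correct and is the paper's proof. The paper also just invokes Corollary \ref{cor.pres} to present $M\hat\otimes_A B$ by the image of $T$ over $R\hat\otimes_A B$; the two facts you spell out (independence of $F_j$ from the chosen presentation, and compatibility of minors with ring maps) are left implicit there.

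The one flaw is your aside on non-Noetherian $B$. Proposition \ref{prop.pctp}.2 is itself proved by applying Lemma \ref{lem.compl}.6 over $R\otimes_A B$, which needs that ring to be Noetherian, so citing it does not give $M\hat\otimes_A B=M\otimes_R(R\hat\otimes_A B)$ for arbitrary $B$. That extension is unsupported, but nothing requires it: the corollary is stated under the standing Noetherian hypotheses, and for $B=k(\fp)$ the ring $R\otimes_A k(\fp)=R_\fp/\fp R_\fp$ is Noetherian.
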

 
\begin{proof}
This follows since $ (R \hat\otimes_A B)^p \xrightarrow{\text {$T$}} (R \hat\otimes_A B)^q \to M \hat\otimes_A B \to 0$ is a presentation of $M \hat\otimes_A B$ by Corollary \ref{cor.pres}.
\end{proof}


\section {The completed fiber as semi-local ring}\label{ssec.sem}

We assume that $A \to R$ is a morphism of Noetherian rings,  $I$ an ideal of $R$, and $M$ a finite $R$-module.
For   $\fp \in \Spec A$ let $\hat M_\fp = (M_\fp)^\wedge$ resp. $\hat M(\fp) =M(\fp) ^\wedge$ 
be the $I$-adic completed stalk, resp. completed fiber of $M$ over $\fp$ (Definition \ref{def.cf}). 

In this section we study  $\hat M_\fp$ and  $\hat M(\fp)$ under the  assumption that $R/I$  is finite over $A$. 
We recall first some general results on semi-local rings.

\begin{lemma}\label{lem.sem0}
Let $A\to S$ be a finite morphism of  Noetherian  rings and $\fp \in \Spec A$. 
\begin{enumerate}
\item The fiber $S(\fp)$ is (if non-zero) an Artinian semi-local ring with maximal ideals, say, $\bar\fm_1,..., \bar\fm_r$, and  the (finite) direct sum of its localizations at $\bar\fm_i$. 
\item The stalk $S_\fp$ is a semi-local ring with $\fm_i = \bar\fm_i \cap S_\fp$, $i=1,...,r$,   the maximal ideals of $S_\fp$. We have $\fm_i = \tilde \fm_i S_\fp$, where  $\tilde \fm_i = \fm_i \cap S$ is a prime ideal  of $S$  satisfying  $\tilde \fm_i  \cap A = \fm_i  \cap A = \fp$.\\
Moreover, the Jacobson radical is $\rad(S_\fp)=\sqrt{\fp S_\fp} = \fm_1\cap...\cap \fm_r$.
Note that $S_\fm = S$ if $(A,\fm)$ is local.
\item If $\fp$ is a {\em maximal} ideal of $A$, then the preimages $\tilde \fm_i = \bar\fm_i \cap S$ are maximal ideals of $S$\,\footnote{This is wrong if $\fp$ is not maximal: Consider $A=\mathbb Z[x]$, $R=A[[y]]$, $I=\langle y\rangle$ and $\fp=\langle p\rangle$ for some prime $p\in \mathbb Z$. The ideal $\fm=\langle p,y\rangle$ is maximal in $R_\fp$. The induced ideal in $R$ is $\langle p,y\rangle$, which is not maximal.}, and these are all maximal ideals of $S$ that lie over $\fp$.
\end{enumerate}
\end{lemma}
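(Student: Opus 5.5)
For (1), the plan is to observe that $S(\fp)=S\otimes_A k(\fp)$ is a finite $k(\fp)$-algebra, since $S$ is finite over $A$. Hence it is a finite-dimensional $k(\fp)$-vector space. It is therefore Artinian, since descending chains of ideals are chains of subspaces. An Artinian ring has finitely many maximal ideals $\bar\fm_1,\dots,\bar\fm_r$. By the structure theorem for Artinian rings \cite[Theorem 8.7]{AM69}, it is the direct product, equivalently the finite direct sum, of its localizations $S(\fp)_{\bar\fm_i}$.

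For (2), the key step is to use that $A_\fp\to S_\fp$ is finite, hence integral. By lying over and going up, together with the fact that a prime $\fn$ of $S_\fp$ is maximal iff $\fn\cap A_\fp$ is maximal \cite[Cor.~5.8]{AM69}, the maximal ideals of $S_\fp$ are exactly the primes lying over $\fp A_\fp$. These are exactly the primes containing $\fp S_\fp$. They correspond bijectively to the primes of $S_\fp/\fp S_\fp=S(\fp)$, and every prime of $S(\fp)$ is maximal since this ring is Artinian. This gives $\fm_i=\bar\fm_i\cap S_\fp$, meaning the preimage of $\bar\fm_i$ under $S_\fp\to S(\fp)$, and so $S_\fp$ is semi-local.

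Still for (2), I then set $\tilde\fm_i=\fm_i\cap S$, the preimage under the localization $S\to S_\fp=(A\setminus\fp)^{-1}S$. By the prime correspondence for localization, $\tilde\fm_i$ is a prime disjoint from the image of $A\setminus\fp$ and satisfies $\tilde\fm_iS_\fp=\fm_i$. Moreover $\tilde\fm_i\cap A=\fm_i\cap A=\fp$, because $\fm_i\cap A_\fp=\fp A_\fp$. For the radical, $\rad(S_\fp)=\bigcap\fm_i$ holds by definition. Since the $\fm_i$ are exactly the primes containing $\fp S_\fp$, this intersection equals $\sqrt{\fp S_\fp}$. The remark $S_\fm=S$ for local $(A,\fm)$ holds because elements of $A\setminus\fm$ are units in $A$.

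For (3), let $\fp$ be maximal. Then $\tilde\fm_i\cap A=\fp$ is maximal, and since $A\to S$ is integral, $\tilde\fm_i$ is maximal in $S$, again by \cite[Cor.~5.8]{AM69}. Conversely, let $\fn\subset S$ be a maximal ideal over $\fp$. Then $\fn$ misses $A\setminus\fp$, so $\fn S_\fp$ is a prime over $\fp A_\fp$, hence equal to some $\fm_i$. By the localization correspondence, $\fn=\fm_i\cap S=\tilde\fm_i$. The only point needing care is keeping track of the identifications between $S(\fp)$, $S_\fp/\fp S_\fp$ and the preimages; the argument otherwise consists of standard integral-extension facts, and I do not expect a real obstacle.
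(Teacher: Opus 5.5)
Your proof is correct and follows essentially the same route as the paper. The fiber is finite-dimensional over $k(\fp)$, hence Artinian. The maximal ideals of $S_\fp$ are exactly the primes over $\fp A_\fp$: you get this from the integrality criterion ``$\fn$ maximal iff $\fn\cap A_\fp$ maximal'', where the paper uses going-up. Finally, both arguments use the localization correspondence to obtain $\tilde\fm_i$ with $\tilde\fm_i S_\fp=\fm_i$.

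The only other difference is cosmetic. In (3) you get maximality of $\tilde\fm_i$ from integrality again, whereas the paper reads it off from $S(\fp)=S/\fp S$ for maximal $\fp$. Note also that \cite[Cor.~5.8]{AM69} is stated for injective maps, so you should apply it to the image of $A_\fp$ (resp.\ $A$) in $S_\fp$ (resp.\ $S$), which is routine.
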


\begin{proof}
1. $S(\fp)$ is finite dimensional over the field $k(\fp)$ and hence Artinian (\cite[00J6]{Sta19}), it has only finitely many maximal ideals (\cite[00J7]{Sta19}), say 
$\bar\fm_1,...,\bar\fm_r$.  By  \cite[00JB]{Sta19} any prime ideal is a maximal ideal and $S(\fp)$ is semi-local and the finite direct sum of the localizations at its maximal ideals.

2. Since the $\fm_i$ are the preimages in $S_\fp$ of the 
$\bar\fm_i \in S_\fp/\fp S_\fp$,
the  prime ideals $\fm_i$ are in fact maximal ideals, since
 $\fm_i$  contains $\fp S_\fp$ and hence
$S_\fp/\fm_i = S_\fp/\fm_i +\fp S_\fp=(S_\fp/\fp S_\fp)/\bar \fm_i$ is a field. Thus $\fm_i$
is a maximal ideal.

Now let  $\fm$ be any maximal ideal of $S_\fp$. We claim $ \fm \cap A_\fp=\fp A_\fp$. In fact,
$\fm \cap A_\fp$ is a prime ideal of $A_\fp$ of the form $\fq A_\fp$ with $\fq\subset \fp$ a prime ideal in $A$ (\cite[Theorem 4.1]{Mat86}). Then 
$ \fm \cap A_\fp=\fq A_\fp \subset \fp A_\fp$. Since $A_\fp \to S_\fp$ is finite, there exists (by going-up, \cite[Theorem 9.4]{Mat86}) a prime ideal $\fm'$ of $S_\fp$, $\fm\subset \fm'$ s.t. $\fm' \cap A_\fp = \fp A_\fp$.
Since $\fm$ is maximal, $\fm' = \fm$ and the preimage of 
$\fm$ in $A_\fp$ is
$ \fm \cap A_\fp=\fp A_\fp$. Therefore, the ideal in $S_\fp$ generated by the image of  $\fp A_\fp$, which is 
$\fp S_\fp$, is contained in $\fm$.
Then $\bar \fm = \fm/\fp S_\fp$ is a maximal ideal of $S(\fp)$ and hence one of the $\bar\fm_i$ by 1.. It follows that $\fm$ is one of the $\fm_i$ and that $S_\fp$ is semi-local.

We show now that $\fm_i = \tilde \fm_i S_\fp$. Since $\fm_i$ contains $\fp S_\fp$,
the preimage $\tilde \fm_i \subset S$ of $\fm_i$ 
is a prime ideal that contains $\fp$ and hence $\tilde \fm_i S_\fp$ contains $\fp S_\fp$. By \cite[Theorem 4.1]{Mat86}
$\tilde \fm_i S_\fp$ is a prime ideal of $S_\fp$ and it follows that  $S_\fp/\tilde \fm_i S_\fp=S(\fp)/\tilde \fm_i S(\fp)$ is integral. This hows that $\tilde \fm_i S(\fp)$ is a prime ideal of $S(\fp)$ and hence a maximal ideal by 1.. By the reasoning above $\tilde \fm_i S_\fp =\fm_i$.


With $S(\fp)$ also its reduction $S_\fp/K$, 
$K=\sqrt{\fp S_\fp}$, is semi-local  with the same maximal ideals.
We have 
$K=\fm_1\cap\ldots\cap \fm_r,$
 since $K$ is the preimage in $S_\fp$ of 
$\bar\fm_1\cap\ldots\cap \bar\fm_r = \langle 0\rangle$ (the nilradical of $S_\fp/K$, which is 0). \\
\indent 
3. If $\fp$ is maximal, then $S(\fp)= S/\fp S$. Let $\tilde\fm$ be a maximal ideal of $S$ lying over $\fp$. Then $\fp = A\cap \tilde\fm$, hence $\fp S \subset \tilde\fm$ and
$\bar \fm = \tilde \fm/\fp S$ is a maximal ideal of $S(\fp)$. Therefore $\tilde\fm$ is one of the $\tilde\fm_i$ as above.
\end{proof}

\begin{lemma}\label{lem.sem}
Let $S$ be a semi-local ring with maximal ideals $\fm_1, \ldots ,\fm_r$ and let $I=\rad(S)=\fm_1\cap\ldots\cap \fm_r$ (the Jacobson radical of $S$). Then 
\begin{enumerate}
\item $I=\fm_1\cdot\ldots\cdot \fm_r$ and the $I$-adic completion $S^{\wedge,I}$ is a direct sum of complete local rings 
$(S_{\fm_i})^{\wedge,\fm_i}\cong (S^{\wedge,I})^{\wedge,\fm_i}= S^{\wedge,\fm_i}\cong (S^{\wedge,I})_{\fm_i} $,
$$S^{\wedge,I} \cong  (S_{\fm_1})^{\wedge,\fm_1}\oplus...\oplus (S_{\fm_r})^{\wedge,\fm_r}.$$
\item $S/I$ is semi-local and a direct sum of fields,
$$S/I \cong S/\fm_1\oplus...\oplus S/\fm_r \cong S_{\fm_1}/IS_{\fm_1}\oplus\ldots\oplus S_{\fm_r}/IS_{\fm_r}.$$
\item If $S$ is Henselian with respect to $I$,
then $S\cong S_{\fm_1}\oplus\ldots\oplus S_{\fm_r}$.
\end{enumerate}
\end{lemma}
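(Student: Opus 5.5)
The argument rests on the Chinese remainder theorem, applied to the pairwise comaximal powers of the maximal ideals. I assume, as the paper does throughout this section, that $S$ is Noetherian; this is needed for the completion statements via Lemma \ref{lem.compl}.

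\textbf{Part 1, first claim.} The $\fm_i$ are pairwise comaximal, so the standard Chinese remainder argument gives $\fm_1\cap\ldots\cap\fm_r=\fm_1\cdots\fm_r$. More generally, for every $m\ge 1$ the powers $\fm_1^m,\ldots,\fm_r^m$ are pairwise comaximal: two distinct maximal ideals have comaximal radicals, hence comaximal powers. Since $I^m=\fm_1^m\cdots\fm_r^m$, this yields
\[
S/I^m \cong S/\fm_1^m\oplus\ldots\oplus S/\fm_r^m .
\]
Each $S/\fm_i^m$ is local with maximal ideal $\fm_i/\fm_i^m$, so $S/\fm_i^m=S_{\fm_i}/\fm_i^mS_{\fm_i}$, exactly as in the proof of Lemma \ref{lem.compl}.12.

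\textbf{Part 1, passage to the limit.} The isomorphisms above are compatible with the transition maps, and inverse limits commute with finite direct sums. Hence
\[
S^{\wedge,I}\cong\bigoplus_i \lim_{\longleftarrow} S_{\fm_i}/\fm_i^m S_{\fm_i}=\bigoplus_i (S_{\fm_i})^{\wedge,\fm_i}.
\]
By Lemma \ref{lem.compl}.12, each summand equals $S^{\wedge,\fm_i}$ and is a complete local ring.

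\textbf{Part 1, the remaining identifications.}
\begin{itemize}
\item For $(S^{\wedge,I})^{\wedge,\fm_i}=S^{\wedge,I+\fm_i}$, use Lemma \ref{lem.compl}.11 together with $I+\fm_i=\fm_i$.
\item For the identification with $(S^{\wedge,I})_{\fm_i}$, localize the decomposition at the maximal ideal of $S^{\wedge,I}$ lying over $\fm_i$. The idempotents kill every summand except the $i$-th, and that summand is already local, so localizing it changes nothing.
\end{itemize}

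\textbf{Part 2.} This is the case $m=1$ of the Chinese remainder isomorphism. Each $S/\fm_i$ is a field, and $S/\fm_i=S_{\fm_i}/\fm_iS_{\fm_i}=S_{\fm_i}/IS_{\fm_i}$, because $IS_{\fm_i}=\fm_iS_{\fm_i}$: the other $\fm_j$ become the unit ideal after localizing at $\fm_i$.

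\textbf{Part 3.} This is the main point.
\begin{itemize}
\item The decomposition $S/I\cong\prod_i S/\fm_i$ gives pairwise orthogonal idempotents $\bar e_1,\ldots,\bar e_r$ in $S/I$ summing to $1$.
\item Since $S$ is Henselian with respect to $I$, idempotents lift uniquely modulo $I$. Applying this to the polynomial $T^2-T$, whose derivative $2T-1$ is a unit at every idempotent, we lift them to idempotents $e_i\in S$.
\item The lifts are again orthogonal and sum to $1$. Indeed, $e_ie_j$ for $i\neq j$ is an idempotent in $I\subset\rad(S)$, hence $0$. Likewise $1-\sum e_i$ is an idempotent in $I$, hence $0$.
\item Thus $S\cong\prod_i Se_i$. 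Each $Se_i$ has exactly one maximal ideal, namely the one corresponding to $\fm_i$, so it is local.
\item An element of $S$ outside $\fm_i$ acts invertibly on $Se_i$, which shows $Se_i=S_{\fm_i}$.
\end{itemize}

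The expected obstacle is only the idempotent-lifting step, specifically checking that the Henselian definition being used does supply lifts of idempotents. Under the standard definition this is immediate, since $T^2-T$ has simple roots.
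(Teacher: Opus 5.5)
Your proof is correct and is essentially the paper's argument written out. The paper proves parts 1 and 2 by citing Matsumura's Theorems 1.3, 1.4 and 8.15, which are exactly your Chinese remainder argument on $S/I^m\cong\prod_i S/\fm_i^m$ passed to the limit (for Noetherian $S$, as you assume). It proves part 3 by citing the lifting of idempotents for Henselian pairs (Stacks, tag 09XI), which your $T^2-T$ argument supplies directly.
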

\begin{proof} Statement 1. follows from \cite[Theorem 1.3 and Theorem 8.15]{Mat86}. Since $I \subset \fm_i$ we have $(S^{\wedge,I})^{\wedge,\fm_i}= S^{\wedge,\fm_i}$.
\\
2. $S/I$ is semi-local and isomorphic to $S/\fm_1\oplus...\oplus S/\fm_r$ by \cite[Theorem 1.3 and Theorem 1.4]{Mat86},
and hence an Artinian ring. This implies by \cite[Lemma 10.52.6, tag 00JB]{Sta19}, that 
$$S/I\cong S_{\fm_1}/IS_{\fm_1}\oplus\ldots\oplus S_{\fm_r}/IS_{\fm_r}.$$
3. If $S$ is Henselian with respect to $I$ this isomorphism lifts to 
$$S\cong S_{\fm_1}\oplus\ldots\oplus S_{\fm_r} $$
(\cite{Sta19} Lemma  15.11.6 (2), tag 09XI or \cite{KPR75} Satz 2.6.1 (3)).
\end{proof}

In general the localization of a complete local ring is not complete, but in the following situation it is.
\begin{lemma} \label{lem.sem2}
Let $S$ be a ring,  $\fm_1,...,\fm_r$ distinct maximal ideals of $S$, and $I=\fm_1\cap ...\cap\fm_r$. Then 
\begin{enumerate}
\item $I=\fm_1\cdot...\cdot\fm_r$ and
$S^{\wedge,I}$ is a semi-local ring with maximal ideals 
$\tilde \fm_i :=\fm_i S^{\wedge,I}$, $i=1,...,r$, Jacobson radical $\rad(S^{\wedge,I}) = I S^{\wedge,I} = \tilde \fm_1\cap ...\cap\tilde\fm_r$, 
and
$$ S^{\wedge,I} \cong  S^{\wedge,{\fm_1}} \oplus ...\oplus S^{\wedge,{\fm_r}}.$$
For each $i$, $S^{\wedge,{\fm_i}} =(S^{\wedge,I})^{\wedge,\tilde\fm_i}\cong (S^{\wedge,I})_{\tilde\fm_i} \cong  (S_{\fm_i})^{\wedge,{\fm_i}}$ is a complete local ring with maximal ideal $\fm_i S^{\wedge,\fm_i}$ and residue field $S^{\wedge,\fm_i}/ \fm_i S^{\wedge,\fm_i}= S/\fm_i$.
Moreover, $S^{\wedge,I}/ IS^{\wedge,I}= S/I = S/\fm_1 \oplus...\oplus S/\fm_r$. 

\item If $S$ is Noetherian and $M$ a finite  $S$-module, then $(M^{\wedge,I})_{\fm_i} \cong  M^{\wedge,{\fm_i}}= (M_{\fm_i})^{\wedge,{\fm_i}}$ and
$$ M^{\wedge,I} \cong  M^{\wedge,{\fm_1}} \oplus ...\oplus M^{\wedge,{\fm_r}}.$$
\end{enumerate}
\end{lemma}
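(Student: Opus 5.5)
The plan is to reduce everything to the Chinese remainder theorem applied to each truncation $S/I^m$ and then pass to the inverse limit. No Noetherian hypothesis is needed for part 1.

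First, since the $\fm_i$ are distinct maximal ideals, they are pairwise comaximal. Hence $\fm_1\cap\ldots\cap\fm_r=\fm_1\cdots\fm_r$, and the same holds for the powers: $I^m=\fm_1^m\cdots\fm_r^m=\fm_1^m\cap\ldots\cap\fm_r^m$, because the $\fm_i^m$ are still pairwise comaximal. The Chinese remainder theorem then gives
\[
S/I^m\cong S/\fm_1^m\oplus\ldots\oplus S/\fm_r^m ,
\]
and these isomorphisms are compatible with the transition maps of the inverse systems. Taking $\lim\limits_{\longleftarrow}$, which commutes with finite products, yields $S^{\wedge,I}\cong S^{\wedge,\fm_1}\oplus\ldots\oplus S^{\wedge,\fm_r}$.

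Next I identify each summand. Since $S/\fm_i^m$ is local with maximal ideal $\fm_i/\fm_i^m$, we have $S/\fm_i^m=S_{\fm_i}/\fm_i^mS_{\fm_i}$, exactly as in the proof of Lemma \ref{lem.compl}.12. This gives $S^{\wedge,\fm_i}=(S_{\fm_i})^{\wedge,\fm_i}$.

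To see that $S^{\wedge,\fm_i}$ is local with maximal ideal $\fm_iS^{\wedge,\fm_i}$ and residue field $S/\fm_i$, I would argue without Noetherianity. An element $(f_m)$ is a unit as soon as $f_1\notin\fm_i/\fm_i$, because each $f_m$ is then a unit in the local ring $S/\fm_i^m$ and the inverses are compatible. The non-units therefore form the kernel of $S^{\wedge,\fm_i}\to S/\fm_i$. The one delicate point is that this kernel equals $\fm_iS^{\wedge,\fm_i}$ when $\fm_i$ is not finitely generated. I would either restrict to the statement "the maximal ideal is the kernel of the projection to $S/\fm_i$", or invoke Remark \ref{rm.compl} (item 4 in general) together with finite generation where needed. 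I expect this to be the main obstacle if one insists on full generality.

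Under the product decomposition, the maximal ideals of a finite product of local rings are obtained by replacing one factor by its maximal ideal. The ideal $\fm_iS^{\wedge,I}$ corresponds to exactly this, because $\fm_i$ is the unit ideal modulo $\fm_j^m$ for $j\ne i$. So the $\tilde\fm_i$ are precisely the maximal ideals, and their intersection is $IS^{\wedge,I}$, which is the Jacobson radical.

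Localizing a product at the ideal that is maximal in the $i$-th factor kills the other factors, since they contain idempotents outside $\tilde\fm_i$. This gives $(S^{\wedge,I})_{\tilde\fm_i}\cong S^{\wedge,\fm_i}$. Likewise, $\tilde\fm_i$-adic completion of the product kills the other factors, because $\tilde\fm_i^m$ is the unit ideal there, and it leaves the complete factor $S^{\wedge,\fm_i}$ unchanged. Finally, $S^{\wedge,I}/IS^{\wedge,I}=S/I$ follows by reducing the product decomposition factorwise, or from Lemma \ref{lem.compl}.3.

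For part 2, with $S$ Noetherian and $M$ finite, I would use Lemma \ref{lem.compl}.6 to write $M^{\wedge,I}=M\otimes_SS^{\wedge,I}$. Tensoring the ring decomposition from part 1 gives $M^{\wedge,I}\cong\bigoplus_i M\otimes_SS^{\wedge,\fm_i}=\bigoplus_iM^{\wedge,\fm_i}$. Localizing at $\fm_i$ kills the other summands, as before. The equality $M^{\wedge,\fm_i}=(M_{\fm_i})^{\wedge,\fm_i}$ follows from $M/\fm_i^mM=M_{\fm_i}/\fm_i^mM_{\fm_i}$, which holds because this module is supported only at $\fm_i$.
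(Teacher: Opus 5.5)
Your proof is correct once the $\fm_i$ are assumed finitely generated (e.g.\ $S$ Noetherian), and it follows a different route from the paper.

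\textbf{The paper's route.} It works inside $S^{\wedge,I}$ first. From $S^{\wedge,I}/IS^{\wedge,I}=S/I$ it gets that the $\tilde\fm_i$ are maximal. From comaximality it gets $IS^{\wedge,I}=\tilde\fm_1\cdots\tilde\fm_r=\tilde\fm_1\cap\ldots\cap\tilde\fm_r$, and via Lemma~\ref{lem.compl}.4 that this is the Jacobson radical. It then applies the structure theorem for completions of semi-local rings (Lemma~\ref{lem.sem}, i.e.\ Matsumura's Theorem~8.15) to $S^{\wedge,I}$. Finally it identifies the factors $(S^{\wedge,I})^{\wedge,\tilde\fm_i}$ with $S^{\wedge,\fm_i}$ by iterated completion (Lemma~\ref{lem.compl}.11).

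\textbf{Your route.} You inline the Chinese remainder argument underlying Matsumura's theorem at each level, $S/I^m\cong\prod_i S/\fm_i^m$. You then read off the maximal ideals, radical, localizations and $\tilde\fm_i$-adic completions from the idempotents of a finite product of local rings.

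\textbf{What each buys.} Your version is self-contained and separates the hypotheses cleanly. The product decomposition, $S^{\wedge,\fm_i}=(S_{\fm_i})^{\wedge,\fm_i}$, and locality of $S^{\wedge,\fm_i}$ with residue field $S/\fm_i$ hold for every ring. Finite generation is needed exactly for two things:
\begin{enumerate}
\item the identification $\ker(S^{\wedge,\fm_i}\to S/\fm_i)=\fm_iS^{\wedge,\fm_i}$;
\item the $\fm_i$-adic completeness of $S^{\wedge,\fm_i}$.
\end{enumerate}
These are Lemma~\ref{lem.compl}.3 and~2, valid for finitely generated ideals by Remark~\ref{rm.compl}; item~4 does not help. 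You use the second when you say the $\tilde\fm_i$-adic completion ``leaves the complete factor unchanged''. The paper's proof is shorter given its citations, but these are all Noetherian results. So it tacitly uses a hypothesis missing from ``Let $S$ be a ring''.

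\textbf{Part~1 is false without finite generation.} So do not call this a mere obstacle, and do not claim part~1 needs no hypothesis. Take $S=k[t_1,t_2,\ldots]$, $r=1$, $\fm_1=(t_1,t_2,\ldots)$.
\begin{enumerate}
\item The classes of $\sum_{i<m}t_i^i$ in $S/\fm_1^m$ are compatible and define $f\in\ker(S^{\wedge,\fm_1}\to k)$.
\item Suppose $f=\sum_j a_jg_j$ with $a_j\in\fm_1$. Then all $a_j$ lie in $(t_1,\ldots,t_M)$ for some $M$.
\item The $k$-algebra endomorphism $\pi$ of $S$ with $t_i\mapsto 0$ for $i\le M$ and $t_i\mapsto t_i$ for $i>M$ preserves every $\fm_1^m$. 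Hence it induces $\hat\pi$ on $S^{\wedge,\fm_1}$.
\item We get $\hat\pi(f)=\sum_j\pi(a_j)\hat\pi(g_j)=0$. Yet the component of $\hat\pi(f)$ in $S/\fm_1^{M+2}$ is the class of $t_{M+1}^{M+1}$, which is nonzero.
\end{enumerate}
Hence $\fm_1S^{\wedge,\fm_1}$ is strictly smaller than the maximal ideal and is not maximal.

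\textbf{A caution on localization.} In part~2, as in part~1, ``localizing at $\fm_i$'' must mean localizing the $S^{\wedge,I}$-module at $\tilde\fm_i$, which is what your idempotent argument does. Localizing as an $S$-module at $\fm_i$ does not kill the other summands. For example, $k[[x]]\otimes_{k[x]}k[x]_{(x-1)}=k((x))\neq 0$.
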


\begin{proof} 1. Write $S^{\wedge} =S^{\wedge,I}$. 
Since $IS^\wedge \subset \fm_i S^\wedge$,
$$S/\fm_i = (S/I)/\fm_i =
(S^{\wedge} /I S^\wedge)/\fm_i S^\wedge = S^\wedge /\tilde\fm_i$$ 
is a field, $\tilde\fm_i$ is a maximal ideal of $S^\wedge$. 
Since the $\fm_i$ are pairwise coprime, $I=\fm_1\cdot...\cdot\fm_r$ (\cite[Theorem 1.3]{Mat86}) we have
$IS^{\wedge} = \fm_1S^{\wedge} \cdot ... \cdot  \fm_rS^{\wedge} = \tilde\fm_1\cdot...\cdot \tilde\fm_r
= \tilde\fm_1\cap...\cap \tilde\fm_r$. 
By Lemma \ref{lem.compl}.4. we have
$IS^{\wedge} \subset \rad(S^{\wedge})$. Since obviously  
$\rad(S^{\wedge})\subset IS^{\wedge}$, we get equality.
This shows that $S^{\wedge}$ is semi-local with maximal ideals $\tilde\fm_1,...,\tilde\fm_r$. Moreover, $S^\wedge = (S^\wedge)^{\wedge,IS^\wedge}$ and 
$S^{\wedge,\fm_i} = (S^{\wedge, I})^{\wedge,\tilde\fm_i}$
(Lemma \ref{lem.compl}.2. and \ref{lem.compl}.11.). Then  Lemma \ref{lem.sem}
gives 
$ S^{\wedge,I} \cong  S^{\wedge,{\fm_1}} \oplus ...\oplus S^{\wedge,{\fm_r}}$, the isomorphisms for 
$S^{\wedge,{\fm_i}}$ and the direct sum for 
$S^{\wedge,I}/ I S^{\wedge,I}$.

2. By Lemma \ref{lem.compl}.6. we have  
$M^{\wedge, I}\cong M \otimes_S S^{\wedge,I}$ and
$M\otimes_S S^{\wedge, \fm_i} \cong M^{\wedge, \fm_i}$. The result for $M^{\wedge,I}$ follows then from the result for $S^{\wedge,I}$.
\end{proof}

\begin{proposition}\label{prop.sem}
Let  $A \to R$ be a morphism of Noetherian rings,  $I\subset R$ an ideal such that $R/I$ is finite over $A$, and $\fp$ a prime ideal of $A$.  
Then:
\medskip

\begin{enumerate}
\item $R_\fp/\fp  R_\fp+I R_\fp= \hat R_\fp/\fp \hat R_\fp+I \hat R_\fp$, $\hat R_\fp = (R_\fp)^{\wedge,I}$, is (if non-zero) an Artinian semi-local ring, with maximal ideals $\bar \fm_1,...,\bar \fm_r$ and isomorphic to the direct sum of its localizations at $\bar \fm_i$.\\

\item The ideals  $\fn_i := \bar \fm_i \cap R_\fp$, resp. $\hat \fn_i := \bar \fm_i \cap \hat R_\fp$, $i=1,...,r$, are maximal ideals of $R_\fp$ resp. $\hat R_\fp$.
The ideals $K:=\sqrt{\fp R_\fp + I R_\fp} = \fn_1\cap...\cap \fn_r$ resp. $\hat K := \sqrt{\fp \hat R_\fp + I \hat R_\fp} = \hat \fn_1\cap\ldots\cap \hat \fn_r$ satisfy
$$
\begin{array}{llll}
R_\fp / K &=R_\fp/\fn_1 \oplus...\oplus R_\fp/\fn_r,\\
\hat R_\fp / \hat K &= \hat R_\fp/\hat\fn_1 \oplus...\oplus \hat R_\fp/\hat\fn_r.
\end{array}
$$
\medskip
\item Let $\tilde R_\fp := (R_\fp)^{\wedge, \fp}$ be the $\fp R_\fp$-adic completion of $R_\fp$\footnote{$\tilde R_\fp$  may be considered as a formal neighbourhood of the usual fiber $R_\fp$.}.
The $I$-adic completion $(\tilde R_\fp)^{\wedge,I}$ 
of $\tilde R_\fp$ is (if non-zero) a semi-local ring with maximal ideals 
$\tilde\fm_1,\ldots,\tilde\fm_r$,  $\tilde\fm_i = (\tilde R_\fp)^{\wedge,I} \cap \bar \fm_i = \fn_i (\tilde R_\fp)^{\wedge,I}$,
and  Jacobson radical
$\rad((\tilde R_\fp)^{\wedge,I}) =
\tilde \fm_1\cap\ldots\cap \tilde \fm_r = 
\sqrt{\fp (\tilde R_\fp)^{\wedge,I}  + I (\tilde R_\fp)^{\wedge,I}}$. 
We have a direct sum 
$$(R_\fp)^{\wedge, \fp + I}=(\tilde R_\fp)^{\wedge,I} \cong  (R_\fp)^{\wedge,\fn_1} \oplus ...\oplus (R_\fp)^{\wedge, \fn_r},$$
with $(R_\fp)^{\wedge,{ \fn_i}} \cong ((R_\fp)_{\fn_i})^{\wedge,{ \fn_i}}
\cong ((\tilde R_\fp)_{\tilde \fm_i})^{\wedge,{\tilde \fm_i}}\cong  (\tilde R_\fp)^{\wedge,{\tilde \fm_i}} \cong ((\tilde R_\fp)^{\wedge,I})_{\tilde \fm_i}$ being  complete local rings. Moreover $(\tilde R_\fp/\fp \tilde R_\fp)^{\wedge,I}=(R_\fp/\fp R_\fp)^{\wedge,I}= \hat R(\fp)$. 
\\
If $M$ is a finite $R$-module, then  
$$ (M_\fp)^{\wedge,\fp+I} \cong  (M_\fp)^{\wedge,\fn_1} \oplus ...\oplus (M_\fp)^{\wedge, \fn_r},$$
with $(M_\fp)^{\wedge,{ \fn_i}}\cong M_\fp \otimes_{R_\fp}(R_\fp)^{\wedge,{\fn_i}} \cong ((M_\fp)_{\fn_i})^{\wedge,{ \fn_i}} =(M_{\fn_i})^{\wedge,{ \fn_i}} = M^{\wedge,{ \fn_i}} $.\\

\item The completed fiber $\hat R(\fp)=R(\fp)^{\wedge,I}$ is (if non-zero) a semi-local ring with maximal ideals $\fm_i = \hat R(\fp) \cap\bar\fm_i = \fn_i \hat R(\fp)$, $i=1,...,r$, and  $\rad(\hat R(\fp)) =\fm_1\cap\ldots\cap \fm_r = \sqrt{I \hat R(\fp)}$.  
The localizations $\hat R(\fp)_{\fm_i}
\cong  (\hat R(\fp)_{\fm_i})^{\wedge,\fm_i}\cong \hat R(\fp)^{\wedge,\fm_i}\cong (R(\fp)_{\fn_i})^{\wedge,\fn_i} \cong R(\fp)^{\wedge,\fn_i}$ are complete local rings
and $\hat R(\fp)$ is a direct sum
$$\hat R(\fp) \cong  R(\fp)^{\wedge,\fn_1}\oplus\ldots\oplus R(\fp)^{\wedge,\fn_r}.$$
The residue field $k(\fm_i):= \hat R(\fp)_{\fm_i}/\fm_i \hat R(\fp)_{\fm_i}\cong R_\fp/\fn_i R_\fp$ of $\hat R(\fp)_{\fm_i}$
is  finite over $k(\fp)$.\\
If $M$ is a finite $R$-module, then  
$$\hat M(\fp)\cong \hat M(\fp)_{\fm_1}\oplus\ldots\oplus \hat M(\fp)_{\fm_r}$$
and $\hat M(\fp)_{\fm_i} \cong  (\hat M(\fp)_{\fm_i})^{\wedge,{\fm_i}} \cong \hat M(\fp)^{\wedge,{\fm_i}} \cong M(\fp)^{\wedge,{\fn_i}} = (M(\fp)_{\fn_i})^{\wedge,{\fn_i}} $.  
Moreover
$ \hat M(\fp)= (M_\fp/\fp M_\fp)^{\wedge,I}= (M_\fp/\fp M_\fp)^{\wedge,\fp+I}$. 
\end{enumerate}
\end{proposition}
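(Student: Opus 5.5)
The whole proposition reduces to Lemma \ref{lem.sem0} applied to $S := R/I$, which is finite over $A$, followed by Lemma \ref{lem.sem2} applied to suitable rings whose ideal of interest has maximal radical pieces.

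\textbf{Item 1.} First note the identification
\[
R_\fp/(\fp R_\fp+IR_\fp) = (R/I)_\fp/\fp (R/I)_\fp = S(\fp).
\]
By Lemma \ref{lem.sem0}.1 this is Artinian semi-local, with maximal ideals $\bar\fm_1,\dots,\bar\fm_r$, and equals the direct sum of its localizations. The equality with $\hat R_\fp/(\fp\hat R_\fp+I\hat R_\fp)$ follows from $R_\fp/IR_\fp=\hat R_\fp/I\hat R_\fp$ (Lemma \ref{lem.compl}.3) after reducing modulo $\fp$.

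\textbf{Item 2.} The $\fn_i$ and $\hat\fn_i$ are preimages of maximal ideals under surjections onto $S(\fp)$, so they are maximal ideals. The radical $K$ is the preimage of the nilradical $\bar\fm_1\cap\dots\cap\bar\fm_r$ of the Artinian ring $S(\fp)$, hence $K=\fn_1\cap\dots\cap\fn_r$, and likewise for $\hat K$. The direct sum decompositions of $R_\fp/K$ and $\hat R_\fp/\hat K$ then come from the Chinese remainder theorem, exactly as in Lemma \ref{lem.sem}.2.

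\textbf{Item 3.} By Lemma \ref{lem.compl}.11 and Lemma \ref{lem.compl}.1,
\[
(\tilde R_\fp)^{\wedge,I}=(R_\fp)^{\wedge,\fp+I}=(R_\fp)^{\wedge,K}.
\]
Since $K=\fn_1\cap\dots\cap\fn_r$ is an intersection of distinct maximal ideals of $R_\fp$, Lemma \ref{lem.sem2} applies with $S=R_\fp$. It gives that the ring is semi-local with maximal ideals $\fn_iR_\fp^{\wedge,K}$ and Jacobson radical $KR_\fp^{\wedge,K}$, the direct sum $\bigoplus_i (R_\fp)^{\wedge,\fn_i}$, and the identification of the summands with $((R_\fp)_{\fn_i})^{\wedge,\fn_i}$ and with the localizations. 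The description $\tilde\fm_i=(\tilde R_\fp)^{\wedge,I}\cap\bar\fm_i$ holds because the residue ring modulo $\fp+I$ is again $S(\fp)$. The further identifications $(\tilde R_\fp)^{\wedge,\tilde\fm_i}\cong(R_\fp)^{\wedge,\fn_i}$ are again Lemma \ref{lem.compl}.11. For the equality
\[
(\tilde R_\fp/\fp\tilde R_\fp)^{\wedge,I}=\hat R(\fp),
\]
use Lemma \ref{lem.compl}.6 to get $\tilde R_\fp/\fp\tilde R_\fp=R_\fp/\fp R_\fp=R(\fp)$. The module statement follows from Lemma \ref{lem.sem2}.2, with $M^{\wedge,\fn_i}=(M_{\fn_i})^{\wedge,\fn_i}$ because $\fn_i\cap R$ is a prime and localization commutes with the completion at a maximal ideal (Lemma \ref{lem.compl}.12).

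\textbf{Item 4.} Apply Lemma \ref{lem.sem2} to $S=R(\fp)$ with the ideals $\bar\fn_i:=\fn_iR(\fp)$. These are maximal ideals, since $R(\fp)/\bar\fn_i=S(\fp)/\bar\fm_i$. Their intersection is $\sqrt{IR(\fp)}$, again because $R(\fp)/IR(\fp)=S(\fp)$ is Artinian. By Lemma \ref{lem.compl}.1,
\[
\hat R(\fp)=R(\fp)^{\wedge,I}=R(\fp)^{\wedge,\sqrt{I}}.
\]
Lemma \ref{lem.sem2} then gives all the claimed isomorphisms, the radical, and the residue fields $R_\fp/\fn_i$. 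The residue fields are finite over $k(\fp)$ because $S(\fp)$ is finite-dimensional over $k(\fp)$. The module version is Lemma \ref{lem.sem2}.2. Finally,
\[
(M_\fp/\fp M_\fp)^{\wedge,I}=(M_\fp/\fp M_\fp)^{\wedge,\fp+I},
\]
since $\fp$ acts as zero on this module.

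\textbf{Main obstacle.} The delicate point is bookkeeping rather than a new idea: one has to check that the various radicals really are finite intersections of distinct maximal ideals in the right ring. This matters because $R_\fp$ itself need not be semi-local, and the other maximal ideals of $R_\fp$ must not interfere. The way to handle it is to always pass through the residue ring $S(\fp)$, where Lemma \ref{lem.sem0} controls everything, and to use Lemma \ref{lem.sem2}, which does not require $S$ to be semi-local.
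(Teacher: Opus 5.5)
Your items 1--3 follow the paper's proof. Item 1 comes from Lemma \ref{lem.sem0}.1 for $S=R/I$. Item 2 pulls back the maximal ideals and the nilradical of the Artinian ring $S(\fp)$. Item 3 combines $(\tilde R_\fp)^{\wedge,I}=(R_\fp)^{\wedge,K}$ with Lemma \ref{lem.sem2} for the possibly non-semi-local ring $R_\fp$.

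\textbf{Item 4 (different route).} The paper does not invoke Lemma \ref{lem.sem2} again. It writes $\hat R(\fp)=(\tilde R_\fp)^{\wedge,I}/\fp(\tilde R_\fp)^{\wedge,I}$ as a quotient of the semi-local ring of item 3, takes the maximal ideals $\fm_i=\tilde\fm_i\hat R(\fp)$, and says the remaining isomorphisms ``follow from step 3''.

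You instead apply Lemma \ref{lem.sem2} directly to $R(\fp)$, using the distinct maximal ideals $\fn_iR(\fp)$ with intersection $\sqrt{IR(\fp)}$ and the equality $R(\fp)^{\wedge,I}=R(\fp)^{\wedge,\sqrt{IR(\fp)}}$. This needs neither item 3 nor $\tilde R_\fp$. It gives at once the decomposition, the identifications $\hat R(\fp)_{\fm_i}\cong R(\fp)^{\wedge,\fn_i}\cong (R(\fp)_{\fn_i})^{\wedge,\fn_i}$, the residue fields and the module statement. The paper's ``follow from step 3'' tacitly needs the decomposition of item 3 reduced modulo $\fp$ summand by summand.

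What the paper's route makes explicit is the compatibility of $\fm_i$ with $\tilde\fm_i$ along $R\to R_\fp\to(\tilde R_\fp)^{\wedge,I}\to\hat R(\fp)$, which Proposition \ref{prop.sem2} uses. In your route this is a one-line check, since all these ideals are preimages of the same $\bar\fm_i$ in $S(\fp)$.

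\textbf{A step in item 3 not justified as written.} You derive $M^{\wedge,\fn_i}=(M_{\fn_i})^{\wedge,\fn_i}$ from Lemma \ref{lem.compl}.12. That lemma requires $\fn_i\cap R$ to be \emph{maximal} in $R$, which is guaranteed only for maximal $\fp$ (Lemma \ref{lem.sem0}.3, Remark \ref{rm.sem0}).

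For non-maximal $\fp$ the equality is actually false if $M^{\wedge,\fn_i}$ means the $(\fn_i\cap R)$-adic completion of the $R$-module $M$. Take $A=\Q[t]$, $R=A[x]$, $I=\langle x\rangle$, $\fp=\langle 0\rangle$, $M=R$. Then $r=1$ and $\fn_1\cap R=\langle x\rangle$. Here $M^{\wedge,\langle x\rangle}=\Q[t][[x]]$ is not even local, while $(M_{\fn_1})^{\wedge,\fn_1}=\Q(t)[[x]]$.

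The paper gives no argument for this last equality either. It should be read as notation for $(M_\fp)^{\wedge,\fn_i}$, or restricted to maximal $\fp$. The rest of your proposal goes through.
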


\begin{proof}
The following commutative diagram makes the relation between the different rings and maximal ideals clear. 
\[
\begin{xymatrix}{
R_\fp \ar[r]^{\fn_i \quad\quad \hat \fn_i} \ar@{->>}[d]& \hat R_\fp \ar[r]^{ \quad\quad \tilde \fm_i} \ar@{->>}[d]_{\hat\fn_i \mapsto \fm_i  }
& (\tilde R_\fp)^{\wedge, I} \ar@{->>}[d]^{\tilde \fm_i \mapsto \bar \fm_i} \ar@{->>}[dl] \\
R(\fp) \ar[r] & \hat R(\fp) \ar@{->>}[r] &  {\hat R(\fp)/I \hat R(\fp)} & \mkern -46mu = \hat R_\fp/\fp \hat R_\fp+I \hat R_\fp
}
\end{xymatrix}
\]
The maximal ideals $\bar \fm_i \subset \hat R(\fp)/I \hat R(\fp)
= R(\fp)/I R(\fp)$ are the starting point, the other maximal ideals $\fn_i \subset  R_\fp$, $\hat \fn_i \subset  \hat R_\fp$, $\tilde \fm_i \subset  (\tilde R_\fp)^{\wedge,I}$, and $\fm_i \subset  \hat R(\fp)$ are the preimages of  $\bar \fm_i$ (that these are maximal ideals will be shown below). The last upper horizontal arrow is $\hat R_\fp = (R_\fp)^{\wedge, I} \to (R_\fp)^{\wedge, I+\fp} = (\tilde R_\fp)^{\wedge,I}$ and the
diagonal arrow is the composition 
$(\tilde R_\fp)^{\wedge, I} \twoheadrightarrow (\tilde R_\fp/\fp \tilde R_\fp)^{\wedge,I}=
((R_\fp)^{\wedge,\fp} /\fp (R_\fp)^{\wedge,\fp} )^{\wedge,I}
=
(R_\fp/\fp R_\fp)^{\wedge,I}= \hat R(\fp)$ 
(using Lemma \ref{lem.compl}.3.). The other arrows are the canonical ones.
\\

1.  Since $S:=R/I$ is finite over $A$, the fiber $S(\fp)= R_\fp/\fp  R_\fp+I R_\fp
=\hat R_\fp/\fp  \hat R_\fp+I \hat R_\fp$ is a semi-local direct sum with maximal ideals $\bar\fm_1,...,\bar\fm_r$ by Lemma \ref{lem.sem0}.1..

2. $S_\fp = R_\fp/I R_\fp=\hat R_\fp/I \hat R_\fp$ is a semi-local direct sum with maximal ideals $S_\fp \cap \bar \fm_i$, $i=1,...,r$. As a preimage from $S_\fp$, $\fn_i = R_\fp \cap \bar \fm_i$ contains $\fp R_\fp+IR_\fp$ and hence $R_\fp /\fn_i= S(\fp)/ \bar \fm_i$ is a field, showing that $\fn_i$ is a maximal ideal. The same holds for the ideals $\hat\fn_i$.
$R_\fp/\fp  R_\fp+I R_\fp$ is an Artinian semi-local ring and so is its reduction $R_\fp/K$, which is therefore the direct sum of the fields $R_\fp/\fn_i$. 
Hence $K=\sqrt{\fp  R_\fp+I R_\fp}=\fn_1\cap...\cap\fn_r$.

3. By Lemma \ref{lem.compl}.11.  and  \ref{lem.compl}.1. we have $(\tilde R_\fp)^{\wedge,I} = (R_\fp) ^{\wedge, \fp + I} = (R_\fp)^{\wedge,K}$. $K=\sqrt{\fp R_\fp + I R_\fp}$. 
We deduce from Lemma \ref{lem.sem2} that 
$((R_\fp)^{\wedge,K})_{\tilde\fm_i}\cong (R_\fp)^{\wedge,{\fn_i}} 
\cong ((R_\fp)_{\fn_i})^{\wedge,{\fn_i}}$ 
is semi-local with maximal ideals 
$\fn_i(R_\fp)^{\wedge,K}=\fn_i(\tilde R_\fp)^{\wedge,I}$. Since $\fn_i = \tilde\fm_i \cap R_\fp$, the maximal ideal $\fn_i(\tilde R_\fp)^{\wedge,I}$ is contained 
in $\tilde\fm_i$ and hence  
$\tilde \fm_i =\fn_i(\tilde R_\fp)^{\wedge,I}$.
Moreover,  Lemma \ref{lem.sem2} gives
 $(\tilde R_\fp)^{\wedge,\tilde\fm_i} = (R_\fp)^{\wedge,\fn_i}
= ((R_\fp)_{\fn_i})^{\wedge,{\fn_i}}$, 
$$(\tilde R_\fp)^{\wedge,I} =\oplus_{i=1}^r (\tilde R_\fp)^{\wedge,\tilde\fm_i}=\oplus_{i=1}^r (R_\fp)^{\wedge,\fn_i},$$
and $\tilde \fm_1\cap\ldots\cap \tilde \fm_r = K (\tilde R_\fp)^{\wedge,I}$= 
$\sqrt{\fp (\tilde R_\fp)^{\wedge,I} + I (\tilde R_\fp)^{\wedge,I}}$. 

4. Since $\tilde R_\fp/\fp \tilde R_\fp= R_\fp/ \fp R_\fp = R(\fp)$ (Lemma \ref{lem.compl}.3.) and since $(\tilde R_\fp)^{\wedge,I}$
is semi-local with maximal ideals $\tilde\fm_i$
by 3., we get that $(\tilde R_\fp)^{\wedge,I}/\fp (\tilde R_\fp)^{\wedge,I}=
R(\fp)^{\wedge,I}= \hat R(\fp)$ is semi-local 
with maximal ideals $\tilde \fm_i \hat R(\fp)$
$i=1,...,r$. Since $\fm_i = \hat R(\fp) \cap \bar \fm_i 
= \hat R(\fp) \cap \tilde \fm_i $ (by the diagram above) we get $\tilde \fm_i \hat R(\fp)=\fm_i$.
The remaining statements follow from step 3.
For the residue field $k(\fm_i)$ we get $\hat R(\fp)_{\fm_i}/\fm_i \hat R(\fp)_{\fm_i}\cong R(\fp)^{\wedge,\fn_i}/\fn_i R(\fp)^{\wedge,\fn_i} \cong (R(\fp)/\fn_i R(\fp))^{\wedge,\fn_i} \cong (R_\fp/\fn_i R_\fp))^{\wedge,\fn_i} \cong 
 R_\fp/\fn_i R_\fp.$
 It is finite over $k(\fp)$ since $\hat R(\fp)/I\hat R(\fp)=R(\fp)/IR(\fp)$ is finite over $k(\fp)$ and $I\hat R(\fp) \subset \fm_i$.

Since $\hat M(\fp)= M(\fp)^{\wedge,I} = M(\fp)\otimes_{R(\fp)}R(\fp)^{\wedge,I}= M(\fp)\otimes_{R(\fp)} \hat R(\fp)$
(Lemma \ref{lem.compl}.6. and Corollary \ref{cor.cpl}) and 
$\hat M(\fp)_{\fm_i}=
\hat M(\fp)\otimes_{\hat R(\fp)}\hat R(\fp)_{\fm_i} $
 (\cite[Lemma 10.11.15, tag 00DK]{Sta19})
we get
$$\hat M(\fp) \cong \hat M(\fp)_{\fm_1}\oplus\ldots\oplus \hat M(\fp)_{\fm_r}$$
and (by Lemma \ref{lem.sem2} and since $I\hat R(\fp) \subset \fm_i$) $\hat M(\fp)_{\fm_i} \cong  (\hat M(\fp)_{\fm_i})^{\wedge,{\fm_i}} \cong \hat M(\fp)^{\wedge,{\fm_i}} \cong M(\fp)^{\wedge,{\fn_i}} $.
\end{proof}

\begin{remark}\label{rm.sem0}{\em
(1) It is shown in Lemma \ref{lem.open} that $\hat M(\fp) \ne 0$ iff $\fp \in \Supp_A(M/I)$ and that 
$$\Supp_R(M/IM) = \Supp_R(M)\cap V(I) \to 
\Supp_A(M/I) =\{\fp \in \Spec A \mid \hat M(\fp) \ne 0 \},$$
$\fn \mapsto \fp = \fn\cap A$, is surjective and finite. \\
It follows from Proposition \ref{prop.sem} for $M=R$ that the prime ideals in $V(I)\subset \Spec R$ lying over $\fp \in \Supp_A(R/I)$ are exactly the $\tilde \fn_i =R\cap \bar\fm_i =R\cap \fm_i$, $i=1,...,r_\fp$, with $\bar\fm_i$ resp. $\fm_i$ the maximal ideals of $\hat R(\fp)/I$ resp. of  $\hat R(\fp)$ from Proposition \ref{prop.sem}.1. 
It follows for $\fn \in \Spec R$, using 
$\hat M(\fp)=M(\fp)\otimes_{R(\fp)}\hat R(\fp)$,
that 
$$\hat M(\fp)_\fn \ne 0 \iff \fn \in \Supp_R(M)\cap V(I),
\ \fp = \fn \cap A.$$
The maximal ideals $\fn_i =R_\fp \cap \bar \fm_i$, $i=1,...,{r_\fp}$, of $R_\fp$ are exactly the 
$\tilde \fn_i R_\fp$. The ideal $\tilde \fn_i$ is a prime ideal in $R$ containing $\fp + I$. It is a maximal ideal if $\fp$ is a maximal ideal of $A$ by Lemma \ref{lem.sem0}, since $R/\tilde \fn_i = R/I+\tilde \fn_i$ and $R/I$ is finite over $A$ (see also the footnote there).

(2) While  $\hat R_\fp$ and $\hat R(\fp)$  are semi-local, this may not  be the case with $R_\fp$ and $R(\fp)$ for non-maximal $\fp$. See Example \ref{ex.sem0} (1), where $R_\fp=R(\fp)$ has infinitely many maximal ideals.
}
\end{remark}

\begin{example}\label{ex.sem0}
{\em The following examples illustrate Proposition \ref{prop.sem} and show the difference between usual fiber and the completed fiber for a non-maximal prime $\fp \subset A$. 
\begin{enumerate}
\item [(1)] In contrast to the completed fiber $\hat R(\fp)$, the usual fibre $R(\fp)$ may not be semi-local:
Consider $A=K[t]$,  $K$ a field, $I=\x \subset A[x]$, $R=A[[x]]$ with $t$ and $x$ one variable. For
$\fp =\langle 0 \rangle \in  \Spec A$, we have $A_\fp = k(\fp) = K(t)$, and
$$R(\fp) = R_\fp  = K[t][[x]] \otimes_{K[t]} K(t) =  \{g / h \, | \, g \in K[t][[x]],  h \in K[t]\smallsetminus 0\},$$
a subring strictly contained in  $\hat R(\fp)=\hat R_\fp = R\hat\otimes_A A_\fp  = K(t)[[x]]$ (see also \cite[Example 2.2]{GP21}). For $c\in K$,
$K[t][[x]]/\langle t-c-x\rangle\cong K[[t]]$ via $f(t,x)\mapsto f(t+c,t)$ and hence $R_{\fp}/\langle t-c-x\rangle= (K[t][[x]]/\langle t-c-x\rangle)_\fp 
\cong K[[t]]_{\langle 0 \rangle}  \cong K((t))$
is a field. Hence $R(\fp) = R_\fp$ has infinitely many maximal 
 ideals $\langle t-c-x\rangle$ if $K$ is infinite. On the other hand, $\hat R(\fp)$ is a local ring.

\item [(2)] A module $M$ with $\hat M_\fp =\hat M(\fp) =0$ while $\dim_{k(\fp)} M(\fp) = \infty$ (see \cite[Example 2.4]{GP21} for more details):\\
Let $A=\Z$, $I=\x \subset \Z[x]$, $R = \Z[x]^{\wedge,I}=\Z[[x]]$, and $M = R/\langle x-p\rangle$, $p \in \Z$ a prime number. Since $R= \lim\limits_{\longleftarrow}\Z[x]/\langle x \rangle^n $ we obtain
$M = \lim\limits_{\longleftarrow}\Z/p^n = \hat \Z_{\langle p\rangle}$, the ring of 
$p$-adic integers.
For the prime ideal $\langle 0\rangle$ we get 
$\hat M(\langle 0\rangle)= M\hat \otimes_\Z \Q =\Q[[x]]/\langle x-p\rangle=0,$
while
$M(\langle 0\rangle)= M\otimes_\Z \Q = \hat \Z_{\langle p\rangle}\otimes_\Z \Q =\Quot(\hat \Z_{\langle p\rangle}) $ 
is infinite dimensional over $\Q=k(\langle 0\rangle)$. 

\item [(3)] Let  $A=K[t]$,  $K$ an algebraically closed field, $I=\x \subset A[x]$, $R=A[x]^{\wedge,\x}=A[[x]]$ with $t$ and $x$ one variable, and 
$M= R/\langle t-x \rangle \cong K[[t]] $ as $A$-module. We have $\Supp_A (M) = \Spec A$ and
$$ \{\fp \,|\,\hat d_\fp(M)\neq0\} = \{\langle t \rangle\} \subsetneqq
\{\fp \,|\,d_\fp(M)\neq0\}= \{\langle t \rangle , \langle 0 \rangle\}  \subsetneqq \Supp_A (M) $$ (see \cite[Example 2.3 (4)]{GP21}).
\end{enumerate}
 }
\end{example}

Proposition \ref{prop.sem} says that for $R/I$ finite over $A$ the completed fiber $\hat R(\fp)$ of $A\to R$ over $\fp \in \Spec A$ is semi-local with maximal ideal $\fm_1,...,\fm_r$, and decomposes as a direct sum of complete local  rings $\hat R(\fp)^{\wedge,\fm_i}$. The following proposition improves this 
in so far as we can find,  for $\fp$ a {\em maximal ideal}, open neighbourhoods $\Spec A_i$ of $\fp$ and open sets
 $\Spec Q_i \subset \Spec R$, separating the maximal ideals $\fm_i$,  
such that $Q_i/IQ_i$ is finite over $A_i$ and satisfies again the assumptions (and hence the conclusions) of Proposition  \ref{prop.sem}.

\begin{proposition}\label{prop.sem2}
In addition to  the assumptions of Proposition \ref{prop.sem}
 assume moreover that 
$\fp\in \Spec A$  is a {\em maximal} ideal.  Let  $\fm_1,...,\fm_r$  be the maximal ideals of the semi-local ring 
$\hat R(\fp) = \hat R(\fp)_{\fm_1}\oplus\ldots\oplus \hat R(\fp)_{\fm_r}$
(Proposition \ref{prop.sem}). Denote by $\varphi:A\to R$ the given map.
\begin{enumerate}
\item 
For each $1\le i \le r $ the preimage $\tilde\fn_i$ in $R$ of $\fm_i$ is a maximal ideal, and there exists an  open neigbourhood  $\Spec A_i \subset \Spec A$ of $\fp$, 
$A_i = A_{g_i}$ for some $g_i \in A$,
and 
 an open neigbourhood $U_i=\Spec Q_i \subset \Spec R$ of $\tilde\fn_i$,  $Q_i = R_{\varphi(g_i)}$,
 such that  
 $\tilde\fn_j \notin U_i$ if $j\ne i$, 
and  $Q_i/IQ_i$ is finite over $A_i$. \\
Moreover, for $\fq \in \Spec A_i$ such that $\hat R(\fq) \ne 0$, the ring $\hat Q_i(\fq)$ is semi-local with maximal ideals being a subset of the maximal ideals of the semi-local ring $\hat R(\fq)$.
\item Assume furthermore that $R$ is  of finite type and flat over $A$.
Then $Q_i$ and $A_i$ can be chosen such that 
$\Spec Q_i \to \Spec A_i$ is flat and open with non-empty fibers $\Spec Q_i(\fq)$.
\end{enumerate}
\end{proposition}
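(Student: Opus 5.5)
Part 1 starts from $S:=R/I$, which is finite over $A$ by hypothesis. By Proposition \ref{prop.sem}, the maximal ideals $\fm_i$ of $\hat R(\fp)$ correspond to the maximal ideals $\bar\fm_i$ of the Artinian ring $S(\fp)=S/\fp S$ (here $\fp$ is maximal, so $k(\fp)=A/\fp$). The preimage $\tilde\fn_i=R\cap\fm_i=R\cap\bar\fm_i$ contains $I+\fp R$. Hence $R/\tilde\fn_i$ is a quotient of $S/\fp S$ by a maximal ideal, and is therefore a field; this is exactly Lemma \ref{lem.sem0}.3 applied to $A\to S$, as already noted in Remark \ref{rm.sem0}(1). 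So the $\tilde\fn_i$ are pairwise distinct maximal ideals of $R$ lying over $\fp$, and they are pairwise comaximal.

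Next I would build the separating open sets. By the Chinese remainder theorem choose $s_i\in R$ with $s_i\in\tilde\fn_j$ for $j\neq i$ and $s_i\equiv 1 \bmod \tilde\fn_i$. Its image $\bar s_i$ in $S/\fp S=\bigoplus_j (S/\fp S)_{\bar\fm_j}$ can be taken, after raising it to a high power, to be the idempotent $e_i$ of the $i$-th summand. Since $S$ is finite over $A$, $\bar s_i$ satisfies a monic equation over $A$. Reducing modulo $\fp$ and using $e_i^2=e_i$, I would pick $g_i\in A\smallsetminus\fp$ so that over $A_{g_i}$ the element $s_i$ satisfies $s_i(s_i-1)\in$ a nilpotent ideal up to units. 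The aim is to make $s_i^{-1}$ integral over $A_{g_i}$ on $S_{g_i}$. The open set $U_i$ should be $\Spec$ of the localization of $R_{\varphi(g_i)}$ at $s_i$, which is how I read $Q_i$. Then $\tilde\fn_i\in U_i$ and $\tilde\fn_j\notin U_i$ for $j\ne i$, because $s_i\in\tilde\fn_j$. Note that inverting $\varphi(g_i)$ alone cannot achieve the separation, since $g_i\notin\fp=\tilde\fn_j\cap A$ for every $j$. Finiteness of $Q_i/IQ_i=S_{g_is_i}$ over $A_i$ then follows once $s_i^{-1}$ is integral over $A_{g_i}$.

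This finiteness is the main obstacle, because Zariski-locally over $A$ one cannot in general split $S$ along the idempotents of $S/\fp S$; that would require a Henselian base, as in Lemma \ref{lem.sem}.3. I would first try to get the integral equation for $s_i^{-1}$ from the characteristic polynomial of multiplication by $s_i$ on the finite $A_{g_i}$-module $S_{g_i}$, after shrinking $g_i$. I expect that this may force a further (étale) refinement, and I would check that step carefully before continuing.

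Granting finiteness, the final assertion of part 1 follows from Proposition \ref{prop.sem} applied to $A_i\to Q_i$ with the ideal $IQ_i$. For $\fq\in\Spec A_i$, the ring $\hat Q_i(\fq)$ is semi-local, and its maximal ideals correspond to those of $Q_i/(\fq+I)Q_i$. These form a subset of the maximal ideals of $S(\fq)$, because $Q_i/IQ_i$ is a localization of $S_{g_i}$. For part 2, $Q_i$ is of finite type and flat over $A_i$ because it is a localization of $R$ at one element. A flat morphism of finite presentation is open, so $\Spec Q_i\to\Spec A_i$ is open. Its image is then an open neighbourhood of $\fp$ (it contains $\fp$ as the image of $\tilde\fn_i$). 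Shrinking $A_i$ to a basic open $\Spec A_{g}$ inside this image gives non-empty fibers $\Spec Q_i(\fq)$ for all $\fq\in\Spec A_i$.
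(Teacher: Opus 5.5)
Your first step agrees with the paper: the $\tilde\fn_i$ are maximal, via Lemma \ref{lem.sem0}.3 and Remark \ref{rm.sem0}. Your observation that inverting $\varphi(g_i)$ cannot separate the points is also correct, and it locates an actual error in the paper's own proof. There the image of $D(f_i)$ is covered by sets $D(g_{ik})$, and the paper asserts that some $\Spec R_{\varphi(g_{ik})}$ contains $\tilde\fn_i$ but no $\tilde\fn_j$, $j\ne i$. Since every $\tilde\fn_j$ lies over $\fp$, the set $D(\varphi(g_{ik}))$ contains either all of them or none. So with $Q_i=R_{\varphi(g_i)}$ as in the statement, part 1 is false as soon as $r\ge 2$. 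This already happens for $A=k$ a field, $R=k[x]$, $I=\langle x(x-1)\rangle$.

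Your repair $Q_i=R_{\varphi(g_i)s_i}$ has a genuine gap exactly at the step you flagged. Shrinking $g_i$ or a characteristic-polynomial argument cannot close it, because finiteness fails in general for \emph{every} Zariski choice. Here is a counterexample:
\begin{itemize}
\item[(a)] Let $\chara k\ne 2$, $A=k[t]$, $R=k[t,x]$, $I=\langle x^2-t\rangle$. Then $S=R/I\cong k[x]$ is finite over $A$.
\item[(b)] Take $\fp=\langle t-1\rangle$. Then $r=2$, with $\tilde\fn_1=\langle t-1,x-1\rangle$ and $\tilde\fn_2=\langle t-1,x+1\rangle$.
\item[(c)] Suppose $\Spec Q\subset\Spec R$ is an affine open containing $\tilde\fn_1$, lying over some $D(g)\ni\fp$, with $C=Q/IQ$ finite over $A_g$.
\item[(d)] Then $C$ is also finite over $S_g=k[x]_{g(x^2)}$. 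The image of $\Spec C$ in $\Spec S_g$ is open (it is $\Spec Q\cap V(I)$) and closed (finite morphism), and it contains $\tilde\fn_1$.
\item[(e)] Since $S_g$ is a domain, this image is all of $\Spec S_g$. That set contains $\tilde\fn_2$ because $g\notin\fp$, so $\tilde\fn_2\in\Spec Q$.
\end{itemize}
In your notation, $s_1=(x+1)/2$ gives $s_1(s_1-1)=(t-1)/4$, which is a non-zero-divisor in every $S_g$, not nilpotent. Also $k[x]_{g(x^2)(x+1)}$ is not integral over the normal domain $k[x]_{g(x^2)}$.

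So, as you suspected, separating the $\tilde\fn_i$ while keeping $Q_i/IQ_i$ finite requires an \'etale neighbourhood of $\fp$, or the henselization of $A_\fp$, where Lemma \ref{lem.sem}.3 applies. The proposition must be reformulated in that setting, and so must Remark \ref{rm.cont}(2) and Corollary \ref{cor.mutau2}, which use it. Your arguments for the ``Moreover'' clause and for part 2 are fine, but only once such a finiteness statement is available.
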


\begin{proof}
1. Consider the following sequence,
$$R\longrightarrow R_\fp\longrightarrow (\tilde R_\fp)^{\wedge,I}
\longrightarrow  \hat R(\fp) = (\tilde R_\fp)^{\wedge,I}/\fp (\tilde R_\fp)^{\wedge,I}.$$
Denote by $\tilde\fn_i$ the preimage in $R$ of $\fm_i = \tilde \fm_i \hat R(\fp)$, which is also the preimage of $\tilde \fm_i \subset (\tilde R_\fp)^{\wedge,I}$. 
Then  $\tilde\fn_i$ is a maximal ideal  by Remark \ref{rm.sem0} (1), since $\fp$ is maximal.

Now choose $f_i\in \tilde\fn_1\cap\tilde\fn_2\ldots\cap \tilde\fn_{i-1}\cap\tilde\fn_{i+1}\ldots\tilde\fn_r$ such that $f_i\notin \tilde\fn_i$  and $\tilde\fn_i \in D(f_j)$ iff $i=j$. Let $\varphi$ denote the given map $A\to R$.
$\Spec (\varphi)(D(f_i))$ can be covered by suitable affine open sets $D(g_{ik})\subset \Spec A$ and with
 $Q_{ik}:=R_{\varphi(g_{ik})}$ we get that $Q_{ik}/IQ_{ik} = (R/IR)_{\varphi(g_{ik})}$ is finite over $A_{g_{ik}}$. For a given $i$ there exists $k$ such that $\tilde\fn_i\in \Spec Q_{ik}$ and $\tilde\fn_j\notin \Spec Q_{ik}$ for $i\neq j$. With $Q_i:=Q_{ik}$ and $A_i:=A_{g_{ik}}$ 
 we can apply Proposition \ref{prop.sem} to $A_i \to Q_i$, which proves the first part of 
 the proposition.
 
2. Since $R$ is flat over $A$ and of finite type the corresponding morphism of affine schemes is open 
(\cite[00I1.]{Sta19}) and thus $Q_i$ and $A_i$ can be chosen such that 
$\Spec Q_i \to \Spec A_i$ is open with non-empty fibers.
\end{proof}

\begin{example}  \label{ex.max}{\em 
For $\fq \in \Spec A$ which is not a maximal ideal, the completed fiber $\Spec \hat R(\fq)$ may be empty while the fiber  $\Spec R(\fq)$ is non-empty (under the assumption of Proposition \ref{prop.sem2}.2.):

Let $A=K[t]$, $P= A[x]/\langle t-x \rangle =K[t] =A$, $I=\langle t \rangle$. Then
$R=P^{\wedge, I} =K[[t]]$ and with $\fq = \langle 0\rangle$
we have $R_\fq=R(\fq)=K((t))$ but, since $t$ is a unit in $K((t))$,
$\hat R_\fq=\hat R(\fq)=K((t))^{\wedge, \langle t \rangle} =\lim\limits_{\longleftarrow}K((t))/t^n=0$,  i.e. $\Spec \hat R(\fq) = \emptyset$.}
\end{example}

\begin{example}\label{ex.sem}
{\em
Consider the ideal  $I=\langle x^2+1\rangle\subset \mathbb Z[x]$ and let $R=\mathbb Z[x]^{\wedge,I}$. Let $p\in\mathbb Z$ be a prime, $\fp=\langle p \rangle \subset \mathbb Z[x]$ the corresponding prime ideal.
We determine $\hat R(\fp)$ for different choices of $\fp$.
\begin{enumerate}
\item If $p\not\equiv 1 \text{ mod } 4$ then\footnote{The polynomial $x^2+1$ splits in $ (\Z/p)[x]$ iff $p\equiv 1 \text{ mod } 4$.}
$\hat R(\fp)=(\Z/p)[x]^{\wedge,I}$ is a complete discrete valuation ring. $I$ is a maximal ideal in $(\Z/p)[x]$ and 
$(\Z/p)[x]^{\wedge,I} \cong (\Z/p)(i)[[x]]$.
\item If $p\equiv 1 \text{ mod } 4$ then                                                                                                 choose $i\in \mathbb Z/p$ with $i^2+1=0$.
$\hat R(\fp)=\mathbb (Z/p)[x]^{\wedge,I}=\mathbb (Z/p)[x]^{\wedge,{\langle x+i\rangle}} \oplus \mathbb (Z/p)[x]^{\wedge,{\langle x-i\rangle}}$ is a semi-local ring, isomorphic to $(\Z/p)[[x]] \oplus (\Z/p)[[x]]$.
\item $\hat R({\langle 0\rangle})=\mathbb Q[x]^{\wedge,I}$ is a complete discrete valuation ring, which is isomorphic to $\mathbb Q(\sqrt{-1})[[x]]$.
\end{enumerate}}
\end{example}
Note that $I\mathbb Q[x]^{\wedge,I}=\{\sum_{j\ge 0} a_j(x^2+1)^j+x\sum_{j\ge 0} b_j(x^2+1)^j, a_j,b_j\in \mathbb Q\}$.


\section{Semicontinuity of the completed fiber dimension}\label{sec.semcont}
In this section we prove the semicontinuity of
$\hat d_\fp(M)$. 
Let $A$ be a ring. We say that  a function $d: \Spec A \to \R \cup \{\infty\}$,  $\fp \mapsto d_\fp$,
 is {\em (upper) semicontinuous} at $\fp$ if $\fp$ has an open neighbourhood $U \subset \Spec A$  such that  $d_\fq  \leq  d_\fp $ for all $\fq \in U$. If this holds for all $\fp\in \Spec A$ we say that $d$ is (upper) semicontinuous on $\Spec A$. We have
$$\begin{array}{lll}
d\text{ is  semicontinuous on } \Spec A \iff  \\
\{\fp \in \Spec A| \ d_\fp \leq N\} \text{ is open in } \Spec A  \text{ for } N=0,1,2,..., \infty.
\end{array}$$
\medskip

The following semicontinuity statement should be well known.

\begin{lemma}\label{lem.free}
Let $A$ be a ring and $M$ a finitely presented  $A$-module. Then $d_\fp(M) = \dim_{k(\fp)} M(\fp)$
 is semicontinuous on  $\Spec A$. If, moreover, $M$ is flat over $A$, then $M$ is locally free and $d_\fp(M)$ is locally constant on  $\Spec A$. Conversely, if $A$ is reduced and $d_\fp(M)$ locally constant on  $\Spec A$, then $M$ is locally free.
\end{lemma}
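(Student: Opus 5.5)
We use a finite presentation of $M$ and read off both statements from the ranks of the fibers of the presentation matrix, combined with Nakayama's lemma.

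\emph{Semicontinuity.} Choose a presentation $A^p \xrightarrow{T} A^q \to M \to 0$. Right exactness of $-\otimes_A k(\fp)$ gives $M(\fp) = \coker\big(T(\fp)\big)$, hence
$$d_\fp(M) = q - \rank T(\fp).$$
For each $j$, the condition $\rank T(\fp) \ge j$ means that some $j$-minor of $T$ does not lie in $\fp$. So the set $\{\fp \mid d_\fp(M) \le q-j\}$ equals the complement of $V(F_{q-j}(M))$, where $F_{q-j}(M)$ is the ideal of $j$-minors. This set is open, which is semicontinuity.

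Alternatively, one can argue with Nakayama: if $d_\fp(M)=n$, lift a $k(\fp)$-basis to elements $m_1,\dots,m_n$ of $M_\fp$. These generate $M_\fp$. Because $M$ is finitely generated, there is $g\notin\fp$ such that they generate $M_g$, and then $d_\fq(M)\le n$ for all $\fq\in D(g)$.

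\emph{Flat implies locally free and $d_\fp(M)$ locally constant.} Take $m_1,\dots,m_n$ as above and set up the exact sequence $0\to K\to A_g^n\to M_g\to 0$. Since $M$ is finitely presented, $K$ is finitely generated. Flatness of $M_\fp$ makes $\operatorname{Tor}_1^{A_\fp}(M_\fp,k(\fp))$ vanish. Tensoring the localized sequence with $k(\fp)$ therefore gives an exact sequence $0\to K(\fp)\to k(\fp)^n\to M(\fp)\to 0$. The map $k(\fp)^n\to M(\fp)$ is an isomorphism, so $K(\fp)=0$. By Nakayama, $K_\fp=0$, and since $K$ is finitely generated, $K_{g'}=0$ for some $g'\notin\fp$. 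Thus $M$ is free of rank $n$ on $D(gg')$, and $d$ is constant there.

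\emph{Converse for reduced $A$.} This is the main step. Suppose $d_\fq(M)=n$ for all $\fq$ in a neighbourhood $D(g)$ of $\fp$, and write $M_g=\coker(T)$ with $q=n$ generators, using the Nakayama argument above. Then $\rank T(\fq)=0$ for every $\fq\in D(g)$, so every entry of $T$ lies in every prime of $A_g$. Hence every entry is nilpotent, and since $A$ is reduced, $T=0$. Therefore $M_g\cong A_g^n$ is free.

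The only delicate point is matching the number of generators to $d_\fp$ after shrinking the neighbourhood, and this is exactly what the Nakayama lifting provides.
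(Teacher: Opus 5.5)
Your proof is correct. The semicontinuity part matches the paper: $d_\fp(M)=q-\rank T(\fp)$, and the non-vanishing loci of the ideals of $j$-minors are open. For the flat case you give a self-contained Tor/Nakayama argument for local freeness, where the paper simply cites Matsumura.

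The converse is where your route genuinely differs. The paper keeps the given global presentation $A^p\xrightarrow{T}A^q\to M\to 0$ and argues as follows:
\begin{itemize}
\item it lets $s$ be the largest size of a non-zero minor of $T$;
\item it uses reducedness to find a prime where some $s$-minor does not vanish, so $d=q-s$ there;
\item it uses constancy of $d$ to conclude that the $s$-minors generate the unit ideal;
\item on each $D(f_i)$ it changes bases to bring $T$ into the form $\mathrm{diag}(1_s,0)$, the lower block vanishing because all $(s+1)$-minors are zero.
\end{itemize}
You instead shrink to a neighbourhood where $M$ is generated by exactly $n=d_\fp(M)$ elements. Constancy then forces every entry of the relation matrix into every prime of $A_g$, and reducedness kills the matrix outright.

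Your route avoids the matrix normal form and uses the same Nakayama lifting throughout. It also shows that finite generation suffices for both the semicontinuity and the converse: any relation has all its coordinates in every prime of $A_g$, hence is zero. Finite presentation is needed only in the flat case. The paper's route instead works with one fixed presentation and produces an explicit cover of $\Spec A$ by the loci $D(f_i)$ of the maximal non-zero minors, which is the Fitting-ideal description of local freeness.

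A minor point: lift the $k(\fp)$-basis to elements of $M$ itself rather than of $M_\fp$, or include their denominators in $g$, so that they make sense in $M_g$.
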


\begin{proof} Consider a presentation of $M$,
$$ A^p \xrightarrow{\text {$T$}} A^q \to  M \to 0,$$
with matrix $T = (t_{ij}), \ t_{ij}  \in A$. 
For $\fp\in \Spec A$ 
let $T(\fp)$ be the matrix with entries the images of $t_{ij}$ in $k(\fp)$. 
Then  $\dim_{k(\fp)} M\otimes_A k(\fp) = q - \rank(T(\fp))$, which is upper semicontinuous since the rank is lower semicontinuous: If $r = \rank(T(\fp))$, then $\fp$ is contained in the open set $D(f)$, with $f \in A[t_{i,j}]$ an $r$-minor of $T$ and $f(\fp) \ne 0$ in $k(\fp)$ (and bigger minors vanish at $\fp$) . Hence, for $\fq \in D(f)$ we have $f(\fq) \ne 0$ in $k(\fq)$ and $\rank(T(\fq))\ge r$. 
If $M$ flat, it is locally free and the rank is locally constant (see \cite[Theorem 4.10 and its proof]{Mat86}).

To see the converse, we may assume that $T\neq0$ and that 
$d_\fp(M)$ is constant on $\Spec A$.
Let $I$ be the ideal generated by the non-zero $s$-minors $f_1,...,f_k$ of $T$
with $ s= \text{ max } \{l \,| \, \exists \ l\text{-minor} \neq 0 \text{ of } T\}.$
If $I\subset \fp$ for some $\fp \in \Spec A$, then $f_i \in \fp A_\fp$ and
$$d_\fp(M) > q-s.$$
Since $I\neq 0$ and $A$ is reduced, $V(I)\neq \Spec A$ and there is a $\fq \in \Spec A$ with $I\not\subset \fq$. This implies 
$$d_\fq(M) = q-s,$$
and since this dimension is constant, $V(I) =\emptyset$. Hence $I=A$ and 
the open sets $D(f_i)$ cover $\Spec A$. 

We claim that 
 $M_{f_i}$ is a free $A_{f_i}$-module on $D(f_i)$. 
 W.l.o.g. let $f_1$ be  the determinant of the upper left $s\times s$-submatrix of $T$. By a change of bases  in $A^p$ and $A^q$ over $A_{f_1}$ we can achieve that $T$ becomes
 $$T_{f_1}=\left[
\begin{array}{c|c}
1_s  & 0 \\ \hline
0 & T'
\end{array}
\right]$$
with $1_s$ the $s\times s$ unit matrix and $T'=0$ because of the maximality of $s$. Then 
$$M_{f_1} \cong A_{f_1}^q /Im(T_{f_1}) =A^{q-s}_{f_1},$$
and similar for all $f_i$. This shows that $M$ is locally free. 
 \end{proof}

\begin{example}\label{ex.notsemcont}
{\em
In many interesting applications, however, $M$ is not finitely presented over $A$. A typical case arises when $A$ is a Noetherian ring,  $R=A[[x]]=A[x]^{\wedge,\x} $, and $M$ is a finitely presented $R$-module that is not finite over $A$ (e.g. $R$ itself).

Then the semicontinuity of $d_\fp(M)$ may not hold on $\Spec A$, as the following example shows (see also Example \ref{ex.sem0} (3)).
Let $K$ be an algebraically closed field,  $ A = K[t]$, $R=A[[x]]$,  and $M  = A[[x]]/\langle x-t\rangle \cong K[[t]]$. Then the set of closed points is $\Max A = \{\langle t-c\rangle, c \in K\}$ and $\Spec A = \Max A \cup \{\langle 0 \rangle\}$.
We have $M(\fp) = K$ for $\fp= \langle t\rangle$ and $0$ for 
$\fp= \langle t-c\rangle, \ c \neq 0$, while $M(\langle 0\rangle) \cong K((t))$ and  $k(\langle 0\rangle) =K(t)$. Hence
$\dim_{k(\langle 0\rangle)}M(\langle 0\rangle) = \infty > \dim_{k(\fp)}M(\fp) $
for $\fp \neq \langle 0\rangle$ and $d_\fp(M)$ is not semicontinuous at every $\fp \ne \langle 0\rangle$  (the generic point $\langle 0\rangle$ is in the neighbourhood of any other point $\fp$). Since $\dim_{k(\langle 0\rangle)}\hat M(\langle 0\rangle) = 0$ the completed fiber dimension  $\hat d_\fp(M) = \dim_{k(\fp)} \hat M(\fp)$ is semicontinuous on $\Spec A$.
}
\end{example}
\medskip

Recall that for an arbitrary $R$-module $M$ $$\Supp_R(M) := \{\fn \in \Spec R\ | \ M_\fn \neq 0\}$$  denotes the support of $M$  and 
$\Ann_R(M) = \{f \in R \, | \, fM=0 \}$ the annihilator ideal of $M$. We have always $\Supp_R(M) \subset V(\Ann_R(M))$,
with equality if $M$ a finitely generated over $R$ (see the proof of Lemma \ref{lem.ann}). More generally, if $I\subset R$ is an ideal and if $M$ is finite over $R$ then 
 (\cite[Exercise 19. , p. 46]{AM69})
$$\Supp_R(M/IM) = V(I+\Ann_R(M)) = V(I)\cap V(\Ann_R(M)).$$
\medskip

Before we state the semicontinuity of $\hat d_\fp(M) = \dim_{k(\fp)}\hat M(\fp)$ (see Theorem \ref{thm.scont}) we compare $\hat d_\fp(M)$ with $d_\fp(M)$ in special cases and formulate a semicontinuity result, which is already non-trivial. 
The  folllowing proposition
is proved for $I=\x$ in \cite[Proposition 2.2]{GP21} 
and the proof carries over to our more general situation.

\begin{proposition}\label{prop.Afin}
Let $A \to R$ a map of Noetherian rings and $M$ a finitely generated $R$-module. If  $I\subset R$ is an ideal such that $R/I$ is finite over $A$  the following holds.
\begin{enumerate}
\item If \, $\Supp_R (M) \subset V(I)$ then  $M$ is finitely generated  over $A$ and
$\hat M(\fq) = M(\fq)$ for all $\fq \in \Spec A$. In particular,
$\hat d_\fp(M)= d_\fp(M)$ is semicontinuous on $\Spec A$.
 \item
If $M$ is finitely generated as an $A$-module,
 then  $\hat d_\fp(M)\leq d_\fp(M)<\infty$ and  $\hat d_\fp(M)$ (as well as $d_\fp(M)$) is semicontinuous on $\Spec A$.
 \end{enumerate}
\end{proposition}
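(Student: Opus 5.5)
For part 1, I will first show that $M$ is finite over $A$. Since $M$ is finite over $R$ and $\Supp_R(M)\subset V(I)$, we have $V(\Ann_R(M))\subset V(I)$. Hence $I\subset\sqrt{\Ann_R(M)}$, and since $R$ is Noetherian, $I^m M=0$ for some $m$. Then $M$ has a finite filtration $M\supset IM\supset\dots\supset I^mM=0$. Each quotient $I^jM/I^{j+1}M$ is a finite $R/I$-module, hence finite over $A$ because $R/I$ is finite over $A$. As $A$ is Noetherian, $M$ is a finite $A$-module.

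Next, Proposition \ref{prop.pctp}.4 with $N=k(\fq)$ gives $\hat M(\fq)=M\hat\otimes_A k(\fq)=M\otimes_A k(\fq)=M(\fq)$, using Corollary \ref{cor.cpl}(ii) for the identification $\hat M(\fq)=M\hat\otimes_A k(\fq)$. A finite module over the Noetherian ring $A$ is finitely presented, so Lemma \ref{lem.free} yields the semicontinuity of $d_\fp(M)=\hat d_\fp(M)$.

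For part 2, let $M$ be finite over $A$. Semicontinuity of $d_\fp(M)$ again follows from Lemma \ref{lem.free}, and $d_\fp(M)<\infty$. The key step is to compare $\hat M(\fp)$ with $M(\fp)$. By Corollary \ref{cor.cpl}(ii) and Proposition \ref{prop.pctp}.1, $\hat M(\fp)=M(\fp)^{\wedge,I}=\lim_m M(\fp)/I^mM(\fp)$. Since $M(\fp)$ is a finite-dimensional $k(\fp)$-vector space, the descending chain of $k(\fp)$-subspaces $I^mM(\fp)$ stabilizes at some $N:=I^{m_0}M(\fp)$. Hence $\hat M(\fp)=M(\fp)/N$, a quotient of $M(\fp)$, and so $\hat d_\fp(M)\le d_\fp(M)$.

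For the semicontinuity of $\hat d_\fp$, the natural approach is to split off the part of $M$ supported on $V(I)$ and apply part 1. Put $M':=M/I^{m}M$ for large $m$. Then $\Supp_R(M')\subset V(I)$, so by part 1 the function $\hat d_\fq(M')=\dim_{k(\fq)}M'(\fq)$ is semicontinuous. Moreover $\hat M(\fq)$ is a quotient of $M'(\fq)=M(\fq)/I^mM(\fq)$, with equality precisely when $I^mM(\fq)$ has already stabilized. The difficulty is that $m$ must be chosen uniformly in a neighbourhood of $\fp$.

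I expect this uniformity to be the main obstacle, and I will handle it via Artin–Rees and Krull's intersection theorem over $R$. The $R$-module $M$ is finite over $A$, so $\bar R:=R/\Ann_R(M)$ is a finite $A$-algebra. Then $\bar I:=I\bar R$ lies in finitely many maximal ideals over each point, and $\bar R^{\wedge,\bar I}=\bar R/\bigcap_m \bar I^m$. By Krull (Lemma \ref{lem.compl}.9), $\bigcap_m I^mM=\{x\in M : (1+a)x=0 \text{ for some } a\in I\}$. Since $M$ is Noetherian, this kernel equals $\ker(1+a)$ on $M$ for a single $a\in I$. Consequently $M=I^mM'' \oplus \ker$ is a decomposition via the idempotent obtained from the finite $A$-algebra $\bar R$: $\bar R$ is integral over $A$, and the element $1+a$ splits $\bar R$ into the part where $I$ is nilpotent and the part where $I$ is the unit ideal.

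This decomposition commutes with base change to $k(\fq)$. So $\hat M(\fq)=M_1(\fq)$ for all $\fq$, where $M_1$ is the direct summand of $M$ on which $I$ is nilpotent. In particular $M_1$ is finite over $A$, and $\hat d_\fq(M)=d_\fq(M_1)$ is semicontinuous by part 1 or Lemma \ref{lem.free}. If a global idempotent is not available, I will localize to $A_g$ near $\fp$, which suffices for semicontinuity at $\fp$.
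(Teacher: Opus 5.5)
Part 1 and the inequality $\hat d_\fp(M)\le d_\fp(M)<\infty$ in part 2 are correct. Your reduction of the semicontinuity of $\hat d_\fp(M)$ to finding an exponent $m$ that works uniformly near $\fp$ is also the right one. The gap is the step meant to supply this uniformity: the claimed splitting $M=M_1\oplus M_2$, with $I$ nilpotent on $M_1$ and $IM_2=M_2$, does not exist in general. It does not exist globally, and it does not exist after passing to a basic open $\Spec A_g$ either.

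Take $A=R=\Z$, $I=p\Z$, $M=\Z$. Then $R/I$ and $M$ are finite over $A$, but $\bar R=\Z$ has no nontrivial idempotents. Moreover $I\Z[1/g]$ is neither nilpotent nor the unit ideal for any $g$ with $p\nmid g$. Here $\hat M(\fq)$ equals $\Z/p$ for $\fq=\langle p\rangle$ and $0$ for every other $\fq$, and this is not $M_1(\fq)$ for any direct summand $M_1$ of $M$ or of $M_g$. Your auxiliary claims fail on the same example. First, $\bar R^{\wedge,\bar I}=\bar R/\bigcap_m \bar I^m$ is false, since $\Z^{\wedge,p\Z}$ is the ring of $p$-adic integers. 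Second, the Krull kernel $\bigcap_m I^mM$ says nothing about $\bigcap_m I^mM(\fq)$: here $\bigcap_m p^m\Z=0$, while $\bigcap_m p^m\Q=\Q$. Such a splitting exists only fiberwise, in the Artinian ring $\bar R(\fq)$, or after henselization or completion, not Zariski-locally.

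The uniformity is in fact elementary. By Lemma \ref{lem.free} there is an open neighbourhood $U$ of $\fp$ with $d_\fq(M)\le D:=d_\fp(M)$ for all $\fq\in U$. The chain $M(\fq)\supseteq IM(\fq)\supseteq I^2M(\fq)\supseteq\cdots$ of $k(\fq)$-subspaces is constant from the first index at which two consecutive terms agree. Hence it stabilizes after at most $d_\fq(M)\le D$ steps. It follows that $\hat M(\fq)=M(\fq)/I^DM(\fq)=(M/I^DM)(\fq)$ for all $\fq\in U$. Semicontinuity of $\hat d_\fq(M)=d_\fq(M/I^DM)$ near $\fp$ then follows from your part 1, or from Lemma \ref{lem.free}, applied to $M/I^DM$.

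The paper does not reprove the proposition; it refers to \cite[Proposition 2.2]{GP21}. Its proof of Theorem \ref{thm.scont} uses exactly this mechanism. There, $I^N\hat M(\fp)=0$ with $N\le\dim_{k(\fp)}\hat M(\fp)$ is spread to a neighbourhood by Lemma \ref{lem.open}, and Lemma \ref{lem.free} is then applied to $M/I^NM$.
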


 Note that  $\hat d_\fp(M) = d_\fp(M)$ for $\fp$ a maximal ideal and $R$ complete (Corollary \ref{cor.maxA} (2)). 
 The following example shows that 
$\hat d_\fp(M) < d_\fp(M)$ may happen under the assumptions of  2.  for $R$ complete and $\fp$ not maximal.  If $M$ not finite over $A$ then $\hat d_\fp(M) =0$ and $d_\fp(M) =\infty$  can happen, see Example \ref{ex.notsemcont}.

\begin{example} \label{ex.complete} {\em
Let $A=K[[t]]$, $K$ a field, $R=A[[x]]$, and let
$M = R/\langle t-x\rangle \cong K[[t]]$.  For the  two prime ideals $\langle 0\rangle$
and $\langle t\rangle$ of $A$ we get:
$k(\langle 0\rangle) = K((t))$,  $k(\langle t\rangle) = K$,
$M(\langle 0\rangle) \cong K[[t]]\otimes_{K[[t]]}K((t)) = K((t))$, $M(\langle t\rangle) \cong K$
and $d_{\langle 0\rangle}(M) =d_{\langle t\rangle}(M) =1$. Hence  $d_{\fp}(M)$
is semicontinuous (even continuous) on $\Spec A$.

$\hat M(\langle 0\rangle) \cong K((t))[[x]]/\langle t-x\rangle =0$, $\hat M(\langle t\rangle) \cong K$
and $\hat d_{\langle 0\rangle}(M) =0$, $\hat d_{\langle t\rangle}(M) =1$. 
Hence  $\hat d_{\fp}(M)$
is semicontinuous on $\Spec A$,
$M$ is finitely generated as $A$-module, and we have 
$\hat d_{\langle 0\rangle}(M) < d_{\langle 0\rangle}(M)$.
}
\end{example}

As a first step to general semicontinuity we show below (Lemma \ref{lem.open}) that the vanishing locus of
$\hat d_\fp(M)$ is open. To prove this, we need the following lemma.

\begin{lemma} \label{lem.ann}  
Let $A\to R$ be a morphism of rings and $M$ an $R$-module.
\begin{enumerate}
\item Considering $M$ as $A$-module, we have 
  $\Ann_A(M) = \Ann_R(M) \cap A$. If 
  $M$  is  finite over $R$ then  
  $$\Supp_A(M) = V(\Ann_A(M))$$
and  hence $\Supp_A(M)$ is closed in $\Spec A$ and the natural map $\Supp_R(M) \to \Supp_A(M)$ is dominant. 
\item Let $R$ be Noetherian, $I$ an ideal in $R$, $M$   finite over $R$, and $M^\wedge$ the $I$-adic completion of $M$. 
Then 
$$M^\wedge=0 \iff IM = M \iff I^n M = M \text{ for all } n\ge 1,$$
and this is equivalent to $\Supp_R(M)\cap V(I)= \emptyset$.
\end{enumerate}
\end{lemma}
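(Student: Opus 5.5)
For part 1 I will first check the equality $\Ann_A(M) = \Ann_R(M)\cap A$ directly. Here $A$ acts on $M$ through $\varphi : A\to R$, and the intersection means the preimage $\varphi^{-1}(\Ann_R(M))$. An element $a\in A$ kills $M$ iff $\varphi(a)$ does, so both inclusions are immediate.

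For the support, I will use that $M$ finite over $R$ makes $M_\fp = M\otimes_A A_\fp = S^{-1}M$ with $S=\varphi(A\smallsetminus\fp)$. If $M=\sum_{j=1}^s Rm_j$, then $M_\fp=0$ iff each $m_j/1=0$. That means there are $a_j\notin\fp$ with $\varphi(a_j)m_j=0$, and then $a=\prod a_j$ satisfies $a\notin \fp$ and $aM=0$. Hence $M_\fp=0$ iff $\Ann_A(M)\not\subset\fp$, so $\Supp_A(M)=V(\Ann_A(M))$, which is closed.

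For dominance, I will note that the image of $\Supp_R(M)=V(\Ann_R(M))$ under $\Spec R\to\Spec A$ has closure $V(\varphi^{-1}(\Ann_R(M)))$. This is the standard fact that the closure of the image of $V(J)$ is $V(\varphi^{-1}J)$, which follows from $\sqrt{\varphi^{-1}(J)}=\varphi^{-1}(\sqrt J)$ and $\sqrt J=\bigcap_{\fn\supset J}\fn$. By the first equality this closure is $V(\Ann_A(M))=\Supp_A(M)$. The one point to be careful about is $\Supp_R(M)=V(\Ann_R(M))$ for finite $M$; it follows by the same argument as above with $A$ replaced by $R$.

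For part 2 I will show a cycle of implications. If $M^\wedge=0$, then Lemma \ref{lem.compl}.3 gives $M/IM=M^\wedge/IM^\wedge=0$, so $IM=M$. If $IM=M$, then by induction $I^nM=I^{n-1}(IM)=I^{n-1}M=M$ for all $n$. If $I^nM=M$ for all $n$, then every $M/I^nM=0$, so the inverse limit $M^\wedge$ is $0$.

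Finally I will compare with the support condition. By the formula quoted before the lemma, $\Supp_R(M/IM)=V(I)\cap V(\Ann_R(M))=V(I)\cap\Supp_R(M)$. Since $M/IM$ is finite, it vanishes iff its support is empty, so $IM=M$ iff $\Supp_R(M)\cap V(I)=\emptyset$.

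I expect no real obstacle here. The only point needing care is using the finiteness of $M$ to turn vanishing of $M_\fp$ into a single annihilating element outside $\fp$.
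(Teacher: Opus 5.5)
Your proof is correct. Part 1 follows the paper's argument: you take a product of finitely many annihilating elements outside $\fp$. The one difference is dominance. You prove it directly from the fact that the closure of the image of $V(J)$ is $V(\varphi^{-1}(J))$. The paper instead notes that $A/\Ann_A(M)\to R/\Ann_R(M)$ is injective and cites the Stacks Project.

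In part 2 your route is simpler than the paper's. You get $M^\wedge=0$ directly from $I^nM=M$ for all $n$, since $M^\wedge$ is then an inverse limit of zero modules. The paper deduces $M^\wedge=0$ from $IM=M$ using $M^\wedge/IM^\wedge=M/IM=0$ and Nakayama over $R^\wedge$ with $I\subset\rad(R^\wedge)$ (Lemma \ref{lem.compl}.4 and \ref{lem.compl}.10). That step needs $M^\wedge$ to be finite over $R^\wedge$. For the converse, the paper uses that $\bigcap_n I^nM$ is the kernel of $M\to M^\wedge$, where you use Lemma \ref{lem.compl}.3.

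A byproduct of your version is that the equivalences $M^\wedge=0\iff IM=M\iff I^nM=M$ hold for any ring and any module. For this, replace Lemma \ref{lem.compl}.3 by the surjectivity of $M^\wedge\to M/IM$, which holds because the transition maps are surjective. Finiteness of $M$ is then used only in the comparison with $\Supp_R(M)\cap V(I)$.
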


\begin{proof}
1. Let $\fp \in \Spec A$. Note that $\fp \notin \Supp_AM \Leftrightarrow M_\fp=0 \Leftrightarrow \forall m\in M \  \exists f \in A$, $f \! \notin \! \fp$,  $fm=0$ and that 
$\fp \notin V(\Ann_A(M)) \Leftrightarrow  \Ann_A(M) \nsubseteq \fp   \Leftrightarrow   \exists f \! \notin \! \fp, f M= 0 $. Hence $\fp \notin V(\Ann_A(M))$ implies $\fp \notin \Supp_A(M)$, i.e. $\Supp_A (M) \subset V(\Ann_A(M))$. 
Now let $M$ be generated over $R$ by $m_1,\dots,m_k \in M$. If $M_\fp=0$, choose  $f_i \in A, f_i \! \notin \! \fp,  f_im_i=0.$ 
Then 
$f=f_1\cdots f_k \! \notin \! \fp$ satisfies $fM=0 $ and hence $f\in \Ann_A(M)$. We get $\fp \notin V(\Ann_A(M))$ and hence the other inclusion $\Supp_A (M) \supset V(\Ann_A(M))$.
Then  $A/\Ann_A(M) \to R/\Ann_R(M)$ is injective and hence $\Supp_R(M) \to \Supp_A(M)$ is dominant (\cite[Lemma 28.8.6, tag 0CC1]{Sta19}).

2. $ IM = M$ is, by induction on $n$,  equivalent to  $I^n M = M$ for all $n\ge 1$.
Since $\cap_{n=1}^\infty I^nM$ is the kernel of $M \to M^\wedge$ (Lemma \ref{lem.compl}.9.),
$M^\wedge =0$ implies $M=I^n M$ for all $n$.\\
Conversely, let  $M=I M$. Then, since 
$M  / I M=M ^\wedge / I M^\wedge$ by Lemma  \ref{lem.compl}.3. we get
$M ^\wedge / I M^\wedge =0$. Since  $M ^\wedge$ is an $R^\wedge$-module and $R^\wedge$ is $I$-adically complete, $I\subset \rad (R^\wedge)$ by Lemma \ref{lem.compl}.4. Hence
$M^\wedge=IM^\wedge$ implies $M^\wedge=0$ by  Lemma \ref{lem.compl}.10. (Nakayama).\\
The last statement follows since $M^\wedge =0  \iff M/IM=0
\iff \Supp_R (M/IM) = \Supp_R(M)\cap V(I) = \emptyset$.
\end{proof}

\begin{lemma} \label{lem.ann1}
Let $R$ be Noetherian, $I\subset R$ an ideal, $M$ a finitely generated $R$-module and $M^\wedge$ the $I$-adic completion of $M$. For $\fn \in \Spec R$ the following holds.
$$(M_\fn)^{\wedge} = 0 \iff \fn \notin \Supp_R(M)\cap V(I)
=\Supp_R(M/IM).$$
\end{lemma}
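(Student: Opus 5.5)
The plan is to reduce to Lemma \ref{lem.ann}.2, applied to the ring $R_\fn$ and the finite $R_\fn$-module $M_\fn$. Here $(M_\fn)^\wedge$ means the $IR_\fn$-adic completion of $M_\fn$. Recall the convention that for a ring map $R\to R_\fn$ we write $(M_\fn)^{\wedge}=(M_\fn)^{\wedge,IR_\fn}$, and that $R_\fn$ is Noetherian and $M_\fn$ is finitely generated over $R_\fn$. Lemma \ref{lem.ann}.2 then gives
$$(M_\fn)^\wedge=0 \iff IM_\fn=M_\fn \iff \Supp_{R_\fn}(M_\fn)\cap V(IR_\fn)=\emptyset .$$

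Next we translate the condition $IM_\fn = M_\fn$ into a condition on $\fn$. We distinguish two cases.

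\emph{Case $I\not\subset \fn$.} Then $IR_\fn=R_\fn$, so $IM_\fn=M_\fn$ and the completion vanishes. Also $\fn\notin V(I)$, so both sides of the claimed equivalence hold.

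\emph{Case $I\subset \fn$.} Then $IR_\fn\subset \fn R_\fn=\rad(R_\fn)$. By Nakayama (Lemma \ref{lem.compl}.10, or directly, since $M_\fn$ is finite over the local ring $R_\fn$), $IM_\fn=M_\fn$ holds iff $M_\fn=0$, i.e.\ iff $\fn\notin \Supp_R(M)$. Since $\fn\in V(I)$ in this case, this says exactly $\fn\notin\Supp_R(M)\cap V(I)$.

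Combining the two cases gives the equivalence $(M_\fn)^\wedge = 0 \iff \fn\notin \Supp_R(M)\cap V(I)$. The final equality $\Supp_R(M)\cap V(I)=\Supp_R(M/IM)$ is the formula $\Supp_R(M/IM)=V(I+\Ann_R(M))=V(I)\cap V(\Ann_R(M))$ recalled before Lemma \ref{lem.ann}. We also use $\Supp_R(M)=V(\Ann_R(M))$ for finite $M$, which is shown in the proof of Lemma \ref{lem.ann}.1 with $A=R$.

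No step is a real obstacle. The only point needing care is the case distinction on whether $I\subset\fn$, so that Nakayama is applied only when $IR_\fn$ lies in the maximal ideal of $R_\fn$.
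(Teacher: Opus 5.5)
Your proof is correct and follows the paper's argument. The paper also applies Lemma \ref{lem.ann}.2 to get $(M_\fn)^\wedge=0 \iff IM_\fn=M_\fn$, then uses Nakayama over the local ring $R_\fn$; your case split on whether $I\subset\fn$ just unpacks its step $IM_\fn=M_\fn \iff M_\fn=0$ or $IR_\fn=R_\fn$.
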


\begin{proof}
By Lemma \ref{lem.ann}.2. we have
$(M_\fn)^\wedge=(M_\fn)^{\wedge, IR_\fn}=0 \iff IM_\fn = M_\fn$.
Since $M_\fn$ is finitely generated over the local  ring $R_\fn$, we get from Nakayama's Lemma
$$IM_\fn = M_\fn \iff M_\fn = 0 \text{ or } IR_\fn =R_\fn
\iff \fn \notin \Supp_R(M) \text{ or }  \fn \notin V(I)$$
which is equivalent to  $\fn \notin \Supp_R(M) \cap V(I)$.
\end {proof}

\begin{lemma}\label{lem.open} Let $A\to R$ be a morphism of Noetherian rings, $M$ a finitely generated $R$-module and $I\subset R$ an ideal such that $R/I$ is finite over $A$.
\begin{enumerate}
\item We have (possibly strict) inclusions 
$$\{ \fp \in \Spec A \ | \ \hat d_\fp(M) \neq 0\} \subset \{\fp \,|\,d_\fp(M)\neq0\}  \subset \Supp_A (M),$$
where the first and the last  sets are closed in $\Spec A$, while the middle set may not be closed.
\item Let $\fp \in \Spec A$. Then  
$$\hat M(\fp)=M(\fp)^{\wedge} = 0 \iff \fp \notin \Supp_A(M/IM).$$
 Hence $\{ \fp \in \Spec A \ | \ \hat d_\fp (M) =0\}$, the {\em vanishing locus} of $\hat d_\fp (M)$, is open in $\Spec A$. 
Moreover, the natural map
$$
  \Supp_{R}(M/IM)= V(I) \cap \Supp_{R}(M)
 \longrightarrow  
  \bigl\{\fp \in \Spec A \mid \hat{d}_{\fp}(M) \neq 0\bigr\},
$$
$  \fq \mapsto \fq \cap A$,
is finite and surjective.
\end{enumerate}
 \end{lemma}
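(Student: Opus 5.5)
The heart of the statement is item 2, and I would prove it first by reducing to Lemma \ref{lem.ann}.2 applied to the finite $R(\fp)$-module $M(\fp)$. Indeed $\hat M(\fp) = M(\fp)^{\wedge,I}$ with $R(\fp)$ Noetherian, so
$$\hat M(\fp)=0 \iff I M(\fp) = M(\fp) \iff (M/IM)(\fp) = 0 .$$
Now $M/IM$ is a finite module over $R/I$, and $R/I$ is finite over $A$. Hence $N:=M/IM$ is a finitely generated $A$-module. By Nakayama over the local ring $A_\fp$, we have $N(\fp)=N_\fp/\fp N_\fp = 0$ iff $N_\fp=0$, i.e. iff $\fp\notin\Supp_A(M/IM)$. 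This gives the first equivalence of 2.

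Openness of the vanishing locus then follows from Lemma \ref{lem.ann}.1. That lemma shows $\Supp_A(M/IM)=V(\Ann_A(M/IM))$ is closed, since $M/IM$ is finite over $R$. The same fact gives closedness of the first set in 1.: $\{\fp \mid \hat d_\fp(M)\ne 0\}=\Supp_A(M/IM)$. It also gives closedness of the last set, $\Supp_A(M)$.

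For the map in 2., note that $\Supp_R(M/IM)=V(I)\cap\Supp_R(M)$ was recalled before Lemma \ref{lem.ann}. I would view this set as $\Supp_{R/I}(N)$ inside $\Spec(R/I)$. The morphism $\Spec(R/I)\to\Spec A$ is finite, so the restricted map is finite. It lands in $\Supp_A(N)$: if $\fq\supset\Ann_R(N)$, then $\fq\cap A\supset \Ann_A(N)$.

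Surjectivity is the step needing care. $A/\Ann_A(N)\to (R/I)/\Ann_{R/I}(N)$ is injective by Lemma \ref{lem.ann}.1, and it is finite. So lying-over (\cite[Theorem 9.3]{Mat86}) gives surjectivity of $V(\Ann_{R}(N))\to V(\Ann_A(N))$ onto $\Supp_A(N)$. This upgrades the "dominant" of Lemma \ref{lem.ann}.1 to "surjective", using finiteness.

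For the inclusions in 1., I would argue as follows.
\begin{itemize}
\item If $d_\fp(M)=0$, then $M(\fp)=0$, hence $\hat M(\fp)=0$. This gives the first inclusion.
\item If $M_\fp=0$, then $M(\fp)=0$. This gives the second inclusion.
\end{itemize}
Strictness and non-closedness of the middle set are shown by Example \ref{ex.sem0} (3) and Example \ref{ex.notsemcont}. There the middle set is $\{\langle t\rangle,\langle 0\rangle\}$, which is not closed because its closure contains all of $\Spec A$. I expect no serious obstacle; the main care point is the surjectivity via lying-over rather than mere dominance.
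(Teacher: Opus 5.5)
Your proposal is correct and follows essentially the paper's own proof. Like the paper, you apply Lemma \ref{lem.ann}.2 over $R(\fp)$, then Nakayama over $A_\fp$ for the finite $A$-module $M/IM$, closedness via Lemma \ref{lem.ann}.1, and the examples for strictness and non-closedness. The only small deviation is in the surjectivity step: you apply lying-over directly to the injective finite extension $A/\Ann_A(N)\to R/\Ann_R(N)$, whereas the paper combines the dominance from Lemma \ref{lem.ann}.1 with the fact that finite morphisms are closed, which amounts to the same argument.
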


\begin{proof} 1. Both inclusions follows from the definitions. For an example where these inclusions are strict, see
Example \ref{ex.sem0} (2) and (3). In Example \ref{ex.sem0} (3) the middle set is not closed. The closedness of the first set follows from 2.

2. From Lemma \ref{lem.ann}.2 we deduce
 $M(\fp)^{\wedge} = 0  \iff M(\fp)/I M(\fp)= 0$.
Since $M(\fp)/I M(\fp) 
  \cong (M/IM) \otimes_A k(\fp)$ and since $M/IM$ is finite over $A$, $(M/IM)_\fp$ is finite over $A_\fp$.
  We get from Nakayama's lemma that 
 $ (M/IM) \otimes_A k(\fp) =0  \iff (M/IM)_\fp=0 \iff 
  \fp \notin \Supp_A(M/IM)$.
Since
$\Supp_A (M/IM)$
 is closed in $\Spec A$, the  vanishing locus of $\hat d_\fp (M)$ is open in $\Spec A$.
 
  Moreover, we get from Lemma \ref{lem.ann}.1 that the  map 
  $$ V(I) \cap \Supp_{R}(M)=\Supp_{R} (M/IM) \to \Supp_A (M/IM)$$
   is dominant. It is finite since $\Spec (R/I)\to \Spec (A)$ is finite. It is surjective, since finite maps are closed (\cite[tag 01WM]{Sta19}). 
\end{proof}  

Lemma \ref{lem.open} is used to prove the following general semicontinuity result.

\begin{theorem}[Semicontinuity theorem]\label{thm.scont}
Let  $A \to R$ be a morphism of Noetherian rings,  $I\subset R$ an ideal such that $R/I$ is finite over $A$, and $M$ a finitely generated $R$-module.
\begin{enumerate}
\item  Then $\hat d_\fp(M) = \dim_{k(\fp)} \hat M(\fp)$ is semicontinuous on 
$\Spec A$. 
\item If $M$ is flat over $A$, then $\hat d_\fp(M)$ is locally constant on $\Spec A$.
\end{enumerate}
\end{theorem}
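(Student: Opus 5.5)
The plan is to reduce, near a given point $\fp$, to the finite $A$-module $M/I^mM$ for a suitable $m$, and then apply Lemma \ref{lem.free}. First I record two facts. For every $m$ the ring $R/I^m$ is finite over $A$: it has a finite filtration by the modules $I^j/I^{j+1}$, which are finite over $R/I$ and hence over $A$. Consequently $M/I^mM$ and $I^mM/I^{m+1}M$ are finite $A$-modules. Second, by Proposition \ref{prop.pctp}.4 and Lemma \ref{lem.compl}.3, for every $\fq\in\Spec A$ we have
$$\hat M(\fq)/I^m\hat M(\fq)=M(\fq)/I^mM(\fq)=(M/I^mM)(\fq).$$

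For part 1, fix $\fp$. If $\hat d_\fp(M)=\infty$ there is nothing to prove, so assume $\hat d_\fp(M)<\infty$. Then $\hat M(\fp)$ has finite length over the semi-local ring $\hat R(\fp)$ (Proposition \ref{prop.sem}), whose Jacobson radical is $\sqrt{I\hat R(\fp)}$. Hence $I^m\hat M(\fp)=0$ for some $m$.

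Put $N:=I^mM/I^{m+1}M$, a finite $A$-module. Then
$$N(\fp)=I^mM(\fp)/I^{m+1}M(\fp)=I^m\hat M(\fp)/I^{m+1}\hat M(\fp)=0.$$
So $\fp\notin\Supp_A(N)$, and since this support is closed there is $g\in A$ with $\fp\in D(g)$ and $N_g=0$. For $\fq\in D(g)$ this gives $I^mM(\fq)=I\cdot I^mM(\fq)$. By Lemma \ref{lem.ann}.2 the completion $(I^mM(\fq))^\wedge$ vanishes, and by Lemma \ref{lem.compl}.3 this completion equals $I^m\hat M(\fq)$. Therefore, on $D(g)$,
$$\hat M(\fq)=\hat M(\fq)/I^m\hat M(\fq)=(M/I^mM)(\fq),\qquad \hat d_\fq(M)=d_\fq(M/I^mM).$$
Since $M/I^mM$ is finite over the Noetherian ring $A$, Lemma \ref{lem.free} shows this function is semicontinuous on $D(g)$. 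As it equals $\hat d_\fp(M)$ at $\fp$, we get $\hat d_\fq(M)\le\hat d_\fp(M)$ near $\fp$.

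For part 2, assume $M$ is flat over $A$. On $D(g)$ the same argument, applied to stalks rather than fibres, gives $I^mM_g=I^{m+1}M_g$. Hence
$$(M_g)^\wedge=M_g/I^mM_g,$$
and by Lemma \ref{lem.compl}.6 this also equals $M_g\otimes_{R_g}R_g^\wedge$. The latter is flat over $A_g$, because $R_g^\wedge$ is flat over $R_g$ (Lemma \ref{lem.compl}.5) and $M_g$ is flat over $A_g$. So $M_g/I^mM_g$ is a finite flat $A_g$-module. By Lemma \ref{lem.free} it is locally free, and $\hat d_\fq(M)=d_\fq(M/I^mM)$ is locally constant near $\fp$.

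The main obstacle is the case $\hat d_\fp(M)=\infty$ in part 2: we must show $\hat d_\fq(M)=\infty$ for all $\fq$ near $\fp$. By part 1 the locus where $\hat d$ is finite is open, and by the argument above $\hat d$ is locally constant on it. So it suffices to show this locus is also closed. In a Noetherian spectrum, $\fp$ lies in the closure of an open set exactly when some generization $\fq\subset\fp$ lies in it. I would attack this by working over $A_\fp$. Flatness of $\hat M_\fp$ over $A_\fp$, together with Nakayama applied to the finite $A_\fp$-modules $(I^jM/I^{j+1}M)_\fp$, should give $\dim_{k(\fp)}(I^jM/I^{j+1}M)(\fp)\le\dim_{k(\fq)}(I^jM/I^{j+1}M)(\fq)$ for each $j$. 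Summing over $j$ would then force $\hat d_\fp(M)$ to be finite whenever $\hat d_\fq(M)$ is. If this inequality fails for the graded pieces as they stand, the fallback is to replace them by the graded pieces of the flat module $\hat M_\fp$ itself.
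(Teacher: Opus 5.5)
\textbf{Part 1 has a gap.} Your part 1 follows the paper's route: find $m$ with $I^m\hat M(\fq)=0$ on a neighbourhood of $\fp$, then apply Lemma \ref{lem.free} to $M/I^mM$. The step that produces the neighbourhood is false, however. For $N=I^mM/I^{m+1}M$ one only has a surjection $N(\fp)\twoheadrightarrow I^mM(\fp)/I^{m+1}M(\fp)$, because $-\otimes_A k(\fp)$ does not preserve the inclusion $I^mM\subseteq M$.

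A counterexample: take $A=K[t]$, $R=A[x]$, $I=\langle x\rangle$, $M=R/\langle x-t\rangle\cong A$ (with $x$ acting as $t$), and $\fp=\langle t\rangle$.
\begin{itemize}
\item $\hat M(\fp)=K$ and $I\hat M(\fp)=0$.
\item But $I^mM=t^mM$, so $N\cong A/tA$ and $N(\fp)=K\neq 0$ for every $m$.
\end{itemize}
So no $g$ with $\fp\in D(g)$ and $N_g=0$ exists. The paper's proof uses the same identification $I^N\hat M(\fp)=(I^NM)\hat\otimes_A k(\fp)$, so following it does not help.

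\textbf{How to repair part 1.} You only need the identity $\hat M(\fq)/I^k\hat M(\fq)=(M/I^kM)(\fq)$, which you did establish correctly, together with Nakayama.
\begin{itemize}
\item Let $d=\hat d_\fp(M)<\infty$. Then $d_\fp(M/I^{d+1}M)\le d$.
\item By Lemma \ref{lem.free}, $d_\fq(M/I^{d+1}M)\le d$ on some open $U\ni\fp$.
\item For $\fq\in U$, the $d+1$ quotients $I^j\hat M(\fq)/I^{j+1}\hat M(\fq)$, $0\le j\le d$, have total dimension at most $d$, so one of them vanishes.
\item Since $I\hat R(\fq)\subseteq\rad(\hat R(\fq))$ (Lemma \ref{lem.compl}.4), Nakayama gives $I^j\hat M(\fq)=0$.
\item Hence $\hat d_\fq(M)=d_\fq(M/I^jM)\le d_\fq(M/I^{d+1}M)\le d$.
\end{itemize}

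\textbf{Part 2 cannot be proved, because it is false as stated.} The module $M$ above is free of rank one over $A$, yet $\hat d_{\langle t\rangle}(M)=1$ while $\hat d_{\langle 0\rangle}(M)=0$. This is essentially the paper's own Example \ref{ex.complete}.

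Accordingly, your finite-case argument breaks at the same nonexistent $g$: on any $D(g)\ni\fp$ one has $M_g/I^mM_g=A_g/t^mA_g$, which is not flat. The obstacle you single out, the case $\hat d_\fp(M)=\infty$, is misplaced: the failure is already in the finite case. Your proposed inequality on graded pieces also fails for $j=1$, since $(IM/I^2M)(\langle t\rangle)=K$ but $(IM/I^2M)(\langle 0\rangle)=0$.

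Flatness of $M$ does not pass to the truncations $M/I^mM$, and their $A$-flatness is what local constancy actually needs. It does hold, for instance, when $\Supp_R(M)\subseteq V(I)$: then $M/I^mM=M$ for large $m$, as in Proposition \ref{prop.mutau}.2. The paper's one-line appeal to Lemma \ref{lem.free} for part 2 has the same defect.
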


Semicontinuity means that $\hat d_\fp(M) \ge \hat d_\fq(M)$ for $\fq$  in some open neighbourhood of $\fp$ in $\Spec A$. Equivalently, the set
$\{\fp \in \Spec A| \ \hat d_\fp(M)  \leq N\}$ is open in  $\Spec A$  for  $N=0,1,2,..., \infty.$ Example \ref{ex.notsemcont} shows, that semicontinuity of the usual fiber dimension $d_\fp(M)$ does in general not hold.

\begin{proof} 
Since $M$ is finite over $R$, $I^nM$ is finite over $R$ and $I^nM/I^{n+1}M$ is finite over $R/I$ for each $n\ge 0$.
Since $R/I$ is finite over $A$, $I^nM/I^{n+1}M$ is finite over $A$ for each $n\ge 0$. From the exact sequence
  $$0 \to I^nM / I^{n+1} M\to M/ I^{n+1} M\to M/ I^nM \to 0$$
it follows by induction that $M/ I^n M$ is finite over $A$ for all $n\ge1$. 
In the same way $I^n/ I^{n+1}$ and $R/ I^n$ are finite over $A$ for all $n\ge1$.

Fix $\fp \in \Spec A$. 
We have (by Lemma \ref{lem.compl}.6.)
that $ \hat M (\fp) = M (\fp)^{\wedge}$  is a finite 
$ \hat R (\fp)$-module.
To prove semicontinuity of $\dim_{k(\fp)}\hat M(\fp)$ at $\fp$, we may assume  that $\hat M (\fp)\ne 0$ (by Lemma \ref{lem.open}.2) and that $\dim_{k(\fp)}\hat M(\fp)< \infty$ (otherwise semicontinuity at $\fp$ is trivial).
Then  $I\hat M (\fp)$ is strictly contained in $\hat M (\fp)$ by Nakayama's lemma (Lemma \ref{lem.compl}.10.),
since $I\hat R(\fp)$ is contained in the Jacobson radical of  $\hat R (\fp)$
by Proposition \ref{prop.sem}.3.
This implies 
$I^N \hat M(\fp) =
 (I^N M) \hat \oplus_A k(\fp) = 0 $ for some  $N =N(\fp) \le \dim_{k(\fp)} \hat M(\fp)$ and hence 
$(I^{n} M) \hat \oplus_A k(\fp) =0$ for $n\ge N$. Again by Lemma \ref{lem.open}.2 it follows that $(I^{n} M) \hat \oplus_A k(\fq) =I^n \hat M(\fq)=0$ for $n\ge N$ and for $\fq$ in some open neighbourhood $U$ of $\fp$.

Since $M/I^nM = \hat M/I^n\hat M$ is finite over $A$ for $n\ge 1$, 
$$\dim_{k(\fp)}(\hat M/I^{n}\hat M) \otimes_{A} k(\fp) =
\dim_{k(\fp)}\hat M(\fp)/I^{n}\hat M(\fp)$$ is semicontinuous on $\Spec A$ by Lemma \ref{lem.free}. We use now that $I^{n}\hat M(\fq) =0$ for $n\ge N$ and $\fq \in U$ to deduce  
 that  $\dim_{k(\fq)}\hat M(\fq) = \dim_{k(\fq)}\hat M(\fq)/I^{n}\hat M(\fq)$ is semicontinuous on $U$. 
 Since $\fp \in \Spec A$ was arbitrary, the semicontinuity of $\dim_{k(\fq)}\hat M(\fq)$ on $\Spec A$ follows. 
 
  If $M$ is flat over $A$, then $\dim_{k(\fq)}\hat M(\fq)$ is locally constant on  $\Spec A$, again by Lemma \ref{lem.free}.
\end{proof}

\begin{remark}\label{rm.cont}
{\em
(1) For $\fp \in \Spec A$ with $\hat R(\fp) \ne 0$, the completed fiber $\hat R(\fp)$ is a
semi-local ring
with maximal ideals $\fm_1,...\fm_r$, isomorphic to   
$\hat R(\fp)_{\fm_1}\oplus\ldots\oplus \hat R(\fp)_{\fm_r}$  (Proposition \ref{prop.sem}).
 $\hat R(\fp)_{\fm_i}$ is a complete local ring with residue field
$k(\fm_i) = \hat R(\fp)/\fm_i$, which is finite over $k(\fp)$.
Moreover, 
$\hat M(\fp)\cong \hat M(\fp)_{\fm_1}\oplus\ldots\oplus \hat M(\fp)_{\fm_r}$ and hence 
$$\dim_{k(\fp)}  \hat M(\fp)=
\sum_{i=1}^r \dim_{k(\fp)} \hat M(\fp)_{\fm_i}=
\sum_{i=1}^r [k(\fm_i):k(\fp)]\dim_{k(\fm_i)} \hat M(\fp)_{\fm_i}.$$ 

(2) Assume that $\fp$ is a {\em maximal}  ideal of $A$. Let $\fm_i$ be one of the maximal ideals of 
$\hat R(\fp)$ and $\tilde \fn_i =\fm_i \cap R$ the preimage of $\fm_i$ in $R$. Then, according to Proposition \ref{prop.sem2}, $\tilde \fn_i$ is a maximal ideal and there  exists   
an open neighbourhood $U_i = \Spec Q_i \subset \Spec R$  of $\tilde\fn_i$  such that $\tilde \fn_j \notin U_i$ if $j\ne i$. 
There exists also an  open neigbourhood  $\Spec A_i \subset \Spec A$ of $\fp$ 
such that $Q_i/IQ_i$ is finite over $A_i$. 
Moreover,
$\fm_i \hat Q_i(\fp)$ is the only maximal ideal of $\hat Q_i(\fp)$. For $\fq \in \Spec A_i$ the ring $\hat Q_i(\fq)$ is semi-local with maximal ideals, say, $\fn_{i,1},...,\fn_{i,t}$,
and $\hat M_{Q_i} := M\hat \otimes_R Q_i$ satisfies $\hat M_{Q_i} (\fp) = \hat M(\fp)_{\fm_i}$. We have
$$\dim_{k(\fp)} \hat M(\fp)_{\fm_i} \geq
\sum_{j=1}^t \dim_{k(\fq)} \hat M_{Q_i}(\fq)_{\fn_{i,j}}
 \ \text {for all } \fq \in U_i.$$
}
\end{remark}

In  \cite{GP21} we considered families $A \to R$ of complete local rings along the trivial section. That is, $R= A[[x]]$ is the completion of $A[x]$ along the ideal $I = \x = \langle x_1,...,x_n\rangle$. 
 The following corollary was proved in \cite[Theorem 2.1]{GP21} for $\dim A =1$ and for arbitrary Noetherian $A$ under the assumption that $M$ is algebraically presented in \cite[Theorem 2.2]{GP21} (and a few other cases 
 \cite[Corollary 2.7]{GP21}).  These restrictions can now be removed.
\begin{corollary} \label{cor.scont}
Let $A$ be a Noetherian ring, $R = A[[x]]=A[x]^{\x}$, $x=(x_1,\ldots,x_n),$ and $M$ a finitely generated $R$-module.
 Then, for every $\fp \in \Spec A$, 
the ideal $\fn_\fp :=  \langle \fp, x_1,...,x_n\rangle_{R} $ is a prime ideal of $R$ and 
 the completed fiber w.r.t. $I=\x$ is the local ring  $\hat R(\fp) \cong (R_{\fn_\fp}/\fp R_{\fn_\fp})^\wedge\cong k(\fp)[[x]] $. Moreover, $\hat M(\fp) \cong (M_{\fn_\fp}/\fp M_{\fn_\fp})^\wedge$ and the function $\fp \mapsto \dim _{k(\fp)} \hat M(\fp)$ is semicontinuous on $\Spec A$.
\end{corollary}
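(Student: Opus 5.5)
The plan is to obtain the corollary from Theorem \ref{thm.scont} and Proposition \ref{prop.sem}, applied to $A \to R = A[[x]]$ and $I = \x$. Two facts make this possible. First, $R$ is Noetherian, since $A[x]$ is Noetherian and completions of Noetherian rings are Noetherian (Lemma \ref{lem.compl}.5). Second, $R/I = A[[x]]/\x \cong A$ is finite over $A$; this uses Lemma \ref{lem.compl}.6 for $(A[x]/\x)^\wedge = A$. Hence every hypothesis of Theorem \ref{thm.scont} holds, and the semicontinuity of $\fp \mapsto \dim_{k(\fp)}\hat M(\fp)$ on $\Spec A$ follows at once. What remains is the explicit description of $\hat R(\fp)$ and $\hat M(\fp)$.

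\emph{Primality of $\fn_\fp$.} I will compute $R/\fn_\fp = (R/\x R)/\fp = A/\fp$. This is a domain, so $\fn_\fp$ is prime with $\fn_\fp \cap A = \fp$.

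\emph{The completed fiber.} By Example \ref{ex.pctp}, $\hat R(\fp) = A[[x]]\hat\otimes_A k(\fp) = k(\fp)[[x]]$, which is a complete local ring with maximal ideal $\x$. In the language of Proposition \ref{prop.sem} this means $r = 1$: the fiber $R_\fp/(\fp R_\fp + IR_\fp) = k(\fp)$ is a field, so its only maximal ideal is $\bar\fm_1 = 0$. The corresponding maximal ideal of $R_\fp$ is $\fn_1 = \fp R_\fp + \x R_\fp = \fn_\fp R_\fp$. Proposition \ref{prop.sem}.4 then gives
\[
\hat R(\fp) \cong R(\fp)^{\wedge,\fn_1} \cong \bigl(R(\fp)_{\fn_1}\bigr)^{\wedge,\fn_1}.
\]
I next identify $R(\fp)_{\fn_1}$ with $R_{\fn_\fp}/\fp R_{\fn_\fp}$. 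Localizing $R$ at $\fn_\fp$ already inverts every element of $A\setminus\fp$, so $R(\fp)_{\fn_1} = (R_\fp/\fp R_\fp)_{\fn_\fp} = R_{\fn_\fp}/\fp R_{\fn_\fp}$. Its $\fn_\fp$-adic completion agrees with its $\x$-adic completion, because in this quotient $\fp$ is zero and so the radicals of the two ideals coincide (Lemma \ref{lem.compl}.1). This yields $\hat R(\fp) \cong (R_{\fn_\fp}/\fp R_{\fn_\fp})^\wedge$.

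\emph{The module.} The same chain of identifications, applied to $M$ via Proposition \ref{prop.sem}.4 with $r = 1$, gives
\[
\hat M(\fp) \cong M(\fp)^{\wedge,\fn_1} \cong (M_{\fn_\fp}/\fp M_{\fn_\fp})^\wedge .
\]

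The only step requiring any care is the bookkeeping of localizations, that is, checking that localizing at $\fn_1 \subset R_\fp$ and localizing at $\fn_\fp \subset R$ give the same ring modulo $\fp$. This is routine, since $A\setminus\fp \subset R\setminus\fn_\fp$.
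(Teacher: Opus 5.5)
Your proposal is correct and takes essentially the paper's route: the paper also reads off that $\fn_\fp R_\fp$ is the unique maximal ideal and that $R(\fp)^\wedge\cong(R_{\fn_\fp}/\fp R_{\fn_\fp})^\wedge$ from Proposition \ref{prop.sem} (here $r=1$, since $R/\x\cong A$), and it obtains semicontinuity from Theorem \ref{thm.scont}. You additionally spell out checks the paper leaves implicit: that $A[[x]]$ is Noetherian, that $R/I$ is finite over $A$, that $\fn_\fp$ is prime because $R/\fn_\fp\cong A/\fp$, and the localization bookkeeping.
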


\begin{proof} 
We note that $\fn_\fp R_\fp$  
is a maximal ideal of $R_\fp$ and that
 $R(\fp)^\wedge  \cong (R_{\fn_\fp}/\fp R_{\fn_\fp})^\wedge$.
This  follows from Proposition \ref{prop.sem}.2. and  \ref{prop.sem}.3..
 The semicontinuity follows from Theorem \ref{thm.scont}.
\end{proof}

\section{Smoothness and singular locus}\label{sec.sing}

Before  we apply the semicontinuity theorem of the previous section,
 let us recall some facts about singularities and smoothness in our situation. 
 
\begin{definition} \label{def.sing}
Let $A$ be a Noetherian ring.
\begin{enumerate}
\item If  $A$ is local with maximal ideal $\fm$, then  $A$ is  called {\em regular} if 
$\dim A$ is equal to its embedding dimension $\dim_{A/\fm} \fm/\fm^2$. 
\item $A$ is called {\em regular} if the localization at every prime ideal (equivalently at every maximal ideal, cf. \cite[Chapter 7]{Mat86}) is a regular local ring. 
\item We define the {\em regular locus} of $A$ as
$$\Reg (A):= \{\fp \in \Spec A \mid A_\fp \text{ is regular}\}$$
 and the {\em singular locus} as
$$\Sing (A):=\{\fp \in \Spec A \mid A_\fp \text{ is not regular}\}.$$
\end{enumerate}
\end{definition}

\begin{remark}\label{rm.sing}
\em {
(1) A Noetherian local ring $(A,\fm)$ is regular iﬀ its completion is regular (Lemma \ref{lem.compl}.13). 
Moreover, if $(A,\fm)$ is regular, then $A_\fp$ is regular for all 
$\fp \in \Spec A$ (\cite[Theorem 19.3]{Mat86}).

(2) For arbitrary Noetherian rings $A$ the regular locus 
need not be open in $\Spec A$. However, 
$\Reg (A )$ is open if $A$ is a complete Noetherian local ring 
(\cite[Corollary of Theorem 30.10]{Mat86}) and then the  singular locus 
$\Sing(A)$ is closed in $\Spec A.$
}
\end{remark}

We also recall the notion of smoothness, which is related to regularity, but better suited for families.

\begin{definition}\label{def.smooth}
Let  $\phi:A \longrightarrow B$ be a morphism of rings and $I$ an ideal in $B$. $\phi$ is called {\em (formally) $I$-smooth}, or
$B$ is called {\em (formally) $I$-smooth over $A$}, if for any $A$-algebra $C$ and any 
continuous\footnote{ Here we consider $B$ with the $I$-adic topology and $C/N$ with the discrete topology; $u$ is continuous if $u(I^m)=0$ for some $m$.}  
$A$-algebra homomorphism $u:B \longrightarrow C/N$, $N$ an ideal in $C$ with $N^2=0$, such that $u(I^m)=0$ for some $m$, there exist a morphism $\sigma:B\longrightarrow C$ such that $\pi\sigma=u$:
\[
\xymatrix{B \ar[r]^-{u}\ar@{-->}[dr]^\sigma & C/N\\
A \ar[u]^\phi\ar[r]_v & C\ar[u]_\pi
}
\]
If $I=0$ then $B$ is called a {\em formally smooth} $A$-algebra.
\end{definition}

\begin{lemma}\label{lem.ismooth}
Let $A$ be a ring, $B$ an $A$-algebra and $I$ an ideal in $B$.
\begin{enumerate}
\item 
Let $\widehat B$ be the  $I$-adic completion of $B$. Then the following are equivalent:
\begin{enumerate}
\item $A\longrightarrow B$ is $I$-smooth, 
\item $A \longrightarrow \widehat B$ is $I\widehat B$-smooth.
\end{enumerate}
\item Let $I' \subset B$ an ideal, $I\subset I'$. If 
$A\longrightarrow B$ is $I$-smooth, it is $I'$-smooth.
\item Let $A\to A'$ be any ring map. If 
$A\longrightarrow B$ is $I$-smooth then 
$A'\longrightarrow B' = B\otimes_A A'$ is $I'=IB'$-smooth.\\
In particular,  if $A\to B$ is $I$-smooth and  $\fp \in \Spec A$, then $A_\fp \to B_\fp$  is $I B_\fp$-smooth, and $k(\fp) \to B(\fp)$ is $IB(\fp)$-smooth.
\end{enumerate}
\end{lemma}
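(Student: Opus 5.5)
The plan is to work directly with the lifting property of Definition \ref{def.smooth}, using one observation throughout: a continuous $A$-algebra map $u:B\to C/N$ with $u(I^m)=0$ is the same thing as an $A$-algebra map $B/I^m\to C/N$.

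\textbf{Item 1.} Lemma \ref{lem.compl} is stated only for Noetherian rings, so I would avoid it and argue directly. Set $J_m=\ker(\widehat B\to B/I^m)$. Then $I^m\widehat B\subset J_m$, and $\widehat B/J_m=B/I^m$ because the projection $\widehat B\to B/I^m$ is surjective.

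For (a)$\Rightarrow$(b), let $u:\widehat B\to C/N$ kill $(I\widehat B)^m$. I would like $u$ to factor through $B/I^{m'}$, but that requires $u(J_{m'})=0$, and since $J_{m'}$ may exceed $I^{m'}\widehat B$ when $I$ is not finitely generated, this is not automatic. I therefore plan to proceed more simply: restrict $u$ to $u|_B$, which is continuous, and lift it by (a) to $\sigma:B\to C$. The question is then whether $\sigma$ extends to $\widehat B$. In general it does not, since $C$ is not complete. The fix is to pass through a discrete quotient: choose the lift $\sigma$ so that it kills some power of $I$.

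To get such a lift, I would use the standard reformulation that $I$-smoothness is equivalent to the condition that for every $m$ there is $m'$ such that lifts can be found that factor through $B/I^{m'}$. This is seen by applying the lifting property to $C'=C\times_{C/N}B/I^{m}$-type constructions. Concretely: $\sigma(I^m)\subset N$, hence $\sigma(I^{2m})\subset N^2=0$, so $\sigma$ automatically factors through $B/I^{2m}=\widehat B/J_{2m}$. This gives $\widehat\sigma:\widehat B\to B/I^{2m}\to C$.

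The remaining check is that $\pi\widehat\sigma=u$. Both maps agree on $B$, but I still need them to agree on $J_{2m}$, that is, $u(J_{2m})=0$. This is the main obstacle. I would handle it by assuming, as the paper does throughout, that $I$ is finitely generated (e.g. $B$ Noetherian). In that case $J_n=I^n\widehat B$ by Remark \ref{rm.compl}, and the problem disappears.

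For (b)$\Rightarrow$(a), let $u:B\to C/N$ with $u(I^m)=0$. Then $u$ factors through $B/I^m=\widehat B/J_m$, giving $\hat u$ with $\hat u(I^m\widehat B)=0$. Lift $\hat u$ by (b) to $\hat\sigma$, and set $\sigma=\hat\sigma|_B$.

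\textbf{Item 2.} Any $u$ with $u(I'^m)=0$ also satisfies $u(I^m)=0$, since $I^m\subset I'^m$. So the lift provided by $I$-smoothness works directly.

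\textbf{Item 3.} Given an $A'$-algebra $C$ and $u':B'\to C/N$ with $u'(I^mB')=0$, compose with $B\to B'$ to get an $A$-algebra map $u$, continuous for $I$. Lift $u$ to $\sigma:B\to C$. Then $\sigma\otimes v':B\otimes_AA'\to C$ is a map of $A'$-algebras by the universal property of the tensor product, and it lifts $u'$ because it agrees with $u'$ on the generators $b\otimes a'$. The particular cases follow from $A'=A_\fp$ and $A'=k(\fp)$, using $B_\fp=B\otimes_AA_\fp$ and $B(\fp)=B\otimes_Ak(\fp)$.
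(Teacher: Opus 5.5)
Your proof is correct, with the finite-generation assumption you add in (a)$\Rightarrow$(b). It is essentially what the paper's proof amounts to: the paper gives no argument of its own and only cites \cite{GP21} and the Stacks project \cite{Sta19}, and direct diagram chases like yours are the standard proofs of these facts. Your argument factors a continuous $u$ through $B/I^m$, uses $\sigma(I^{2m})\subset N^2=0$ to factor the lift through $B/I^{2m}$, and uses the coproduct property of $B\otimes_AA'$. The detour through the ``standard reformulation'' with $C\times_{C/N}B/I^m$ is superfluous, since the observation $\sigma(I^{2m})\subset N^2=0$ already does the job.

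The finite-generation hypothesis is not a defect of your argument. With the $I\widehat B$-adic topology on $\widehat B$, (a)$\Rightarrow$(b) is false in general. With the limit topology on $\widehat B$ (given by the kernels of $\widehat B\to B/I^m$), your proof works verbatim for every $I$. Your proofs of (b)$\Rightarrow$(a) and of items 2 and 3 need no extra hypothesis.

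Here is a counterexample for non-finitely generated $I$. Let $k$ be a field, $A=B=k[x_1,x_2,\ldots]/(x_nx_m\ (n\neq m),\ x_n^{2n})$ and $I=(x_1,x_2,\ldots)$.
\begin{itemize}
\item The identity $A\to B$ is trivially $I$-smooth.
\item $S_0:=\widehat B/I\widehat B=k\oplus V$, where $V$ is the set of sequences $(w_n)$ with $w_n\in x_nk[x_n]/(x_n^{2n})$ and $o_n:=\operatorname{ord}w_n\to\infty$, modulo the finitely supported ones.
\item $S_0$ is local and $V=V^2$: write $w_n=x_n^{\lfloor o_n/2\rfloor}w_n'$. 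Hence $\Omega_{S_0/k}\otimes_{S_0}k\cong V/V^2=0$.
\item A projective module $P$ over a local ring with $P\otimes k=0$ vanishes (Kaplansky). But the Euler derivation $x_n^j\mapsto jx_n^j$ of $\widehat B$ preserves $I\widehat B$ and is nonzero on the class of $(x_n^n)_n$. So $\Omega_{S_0/k}$ is not projective, and $S_0$ is not formally smooth over $k$.
\item Take a non-liftable $S_0\to C/N$ and make $C$ an $A$-algebra via $A\to A/I=k$. This gives a map $\widehat B\to C/N$ killing $I\widehat B$ that has no lift, because every $A$-algebra map $\widehat B\to C$ kills $I\widehat B$.
\end{itemize}

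The paper only applies item 1 with $I$ finitely generated, e.g.\ $I=\langle x\rangle$ in Lemma \ref{lem.smooth}. So your version covers everything that is actually used.
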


\begin{proof} 1. is proved in  \cite[Lemma 3.1]{GP21} (see also \cite[tag 07ED]{Sta19}), for 2. see  \cite[tag 07EE]{Sta19} and for 3,  \cite[tag 07EG]{Sta19}.
\end{proof}

\begin{lemma}\label{lem.smooth}
Let $A$ be any ring.
\begin{enumerate}
\item $A[x]$ is $\x $-smooth over $A$. 
\item $A[[x]]$ is $\x $-smooth over $A$.
\end{enumerate}
\end{lemma}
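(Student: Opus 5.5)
The plan is to verify the lifting property of Definition \ref{def.smooth} directly for $A[x]$, and then deduce part 2 from part 1 using Lemma \ref{lem.ismooth}.1.

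For part 1, let $C$ be an $A$-algebra, $N\subset C$ an ideal with $N^2=0$, and $u\colon A[x]\to C/N$ a continuous $A$-algebra homomorphism. Continuity means $u(\x^m)=0$ for some $m$. The polynomial ring $A[x]=A[x_1,\ldots,x_n]$ is free as an $A$-algebra. So I choose arbitrary preimages $c_i\in C$ of $u(x_i)\in C/N$ and define $\sigma\colon A[x]\to C$ as the unique $A$-algebra homomorphism with $\sigma(x_i)=c_i$. Then $\pi\sigma$ and $u$ agree on $A$ and on each $x_i$, hence $\pi\sigma=u$. No condition on $\sigma$ beyond $\pi\sigma=u$ is required, so no continuity of $\sigma$ needs to be checked. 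Thus $A[x]$ is even formally smooth over $A$, and in particular $\x$-smooth. Alternatively, one could first prove formal smoothness ($I=0$) and then invoke Lemma \ref{lem.ismooth}.2 with $0\subset \x$, since a map continuous for the $\x$-adic topology is in particular a homomorphism.

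For part 2, I apply Lemma \ref{lem.ismooth}.1 with $B=A[x]$ and $I=\x$. By the discussion at the beginning of Section \ref{sec.qms}, the $\x$-adic completion of $A[x]$ is $\widehat B=A[[x]]$, with $I\widehat B=\x A[[x]]$. Since $A\to A[x]$ is $\x$-smooth by part 1, the implication (a)$\Rightarrow$(b) of that lemma gives that $A\to A[[x]]$ is $\x A[[x]]$-smooth, which is the claim.

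I expect the main obstacle to be whether Lemma \ref{lem.ismooth}.1 may be used for an arbitrary ring $A$, since some completion facts in Lemma \ref{lem.compl} need Noetherian hypotheses. The ideal $\x$ is finitely generated, so $A[x]/\x^m=A[[x]]/\x^m A[[x]]$ holds for any $A$ by Remark \ref{rm.compl}. This identity is exactly what the cited result \cite[tag 07ED]{Sta19} needs. If a direct argument is preferred, I would proceed as follows. A continuous $u\colon A[[x]]\to C/N$ kills $\x^m$ and therefore factors through $A[[x]]/\x^m=A[x]/\x^m$. Composing with $A[x]\to A[x]/\x^m$ gives a continuous map on $A[x]$, which lifts to $\sigma_0\colon A[x]\to C$ by part 1. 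The remaining difficulty is extending $\sigma_0$ to $A[[x]]$. For this I would choose the lifts $c_i$ with $c_i^m$-type products controlled. Since $u(x)^m=0$, every monomial of degree $m$ in the $c_i$ lies in $N$, so every monomial of degree $2m$ in the $c_i$ lies in $N^2=0$. The $c_i$ are therefore nilpotent in this sense, so the substitution $f\mapsto f(c_1,\ldots,c_n)$ is a finite sum and defines an $A$-algebra map $A[[x]]\to C$ lifting $u$.
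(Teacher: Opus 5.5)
Your proof is correct and follows the paper's argument: part 1 by choosing arbitrary lifts $c_i\in C$ of the images $u(x_i)$, and part 2 by applying Lemma \ref{lem.ismooth}.1 to the $\x$-adic completion $A[[x]]$ of $A[x]$. Your supplementary direct argument for part 2 is also valid but not needed: every degree-$2m$ monomial in the $c_i$ lies in $N^2=0$, so truncated substitution is a ring map lifting $u$, because series of order $\ge 2m$ lie in $\x^{m}A[[x]]$.
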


\begin{proof}
(1) Define the dotted arrow simply by choosing lifts $c_i \in C$ of the elements in $C/N$ to which the elements $x_i$ map. \\
(2) Folllows from (1) and Lemma \ref{lem.ismooth}.1., since $A[[x]]$ is the $\x$-adic completion of $A[x]$.
\end{proof}

Recall that a Noetherian $K$-algebra, $K$ a field, is {\em geometrically regular over $K$}, if $A\otimes_K K'$ is a regular ring for every finite extension field $K'$ of $K$.

\begin{proposition}\label{prop.greg}
Let $K$ be a field and $A$ a Noetherian $K$-algebra. The following are equivalent:
\begin{enumerate}
\item A is geometrically regular over $K$.
\item $A\otimes_K K'$ is  regular for every finite purely inseparable  extension field $K'$ of $K$.
\item $A \otimes_K K'$ is regular for every extension field $K'$ of $K$.
\item $A_\fm$ is geometrically regular over $K$ for every maximal ideal (equivalently for every prime ideal)  $\fm$ of $A$.
\item $\hat A_\fm$, the $\fm$-adic completion of $A_\fm$,  is geometrically regular over $K$ for every maximal ideal 
(equivalently for every prime ideal) $\fm$ of $A$.
\item $A_\fm$ is  $\fm A_\fm$-smooth over $K$ for every maximal ideal (equivalently for every prime ideal) $\fm$ of $A$.
\end{enumerate}
\end {proposition}

\begin{proof}
We prove first the equivalence of 1. and 4.. 
 $A$ is geometrically regular iff $B:=A \otimes_K K'$ is regular for every finite field extension $K \subset K'$. 
 $B$ is regular iff $B_\fn$ is regular for every prime ideal (maximal ideal) $\fn$ of $B$ (cf. Definition \ref{def.sing}).
 Setting $\fp = A\cap \fn$, then $\fp B \subset \fn$ and $B_\fn = (B_\fp)_{\fn B_\fp}$. Since $B_\fp$ is regular iff 
 $(B_\fp)_{\fn'}$  is regular for every maximal ideal
 $\fn'$ of $B_\fp$, and
 since all maximal ideals of $B_\fp$ are of the form $\fn B_\fp$ for $\fn$ a prime ideal of $B$ with 
 $\fp = A\cap \fn$ (Lemma \ref{lem.sem0}.2.), we get
 $$
 \begin{array}{lll}
 &\iff B \text{ is regular for all finite field extensions } K \subset K' \\
 & \iff  B_\fp = A_\fp \otimes_K K' \text{ is regular for all } \fp \in \Spec A \\
 & \quad\quad \  \text{ and all finite field extensions } K \subset K' \\
& \iff A_\fp \text{ is geometrically regular for all } \fp \in \Spec A.
 \end{array}
 $$
 This shows $1. \iff 4.$ for prime ideals of $A$. We still have to show $4. \Longrightarrow 1.$ for maximal ideals of $A$.
Let $\fp$ be a maximal ideal of $A$, and $\tilde\fm_1,...,\tilde \fm_r$ the maximal ideals of $B$ with $\tilde\fm_i=\fp \cap B$ (Lemma \ref{lem.sem0}.3.).  Then $B_\fp$ regular iff 
$(B_\fp)_{\tilde\fm_\fp B_\fp}= B_{\tilde\fm_\fp}$ is regular for  all maximal ideals $\tilde\fm_\fp$ of $B$ lying over $\fp$.
Since $B_\fp$ is semi-local (Lemma \ref{lem.sem0}.3.) and  all maximal ideals of $B$ are of this form by Lemma \ref{lem.sem0}.3., $B$ is regular and hence $A$ is geometrically regular.

Since $A_\fm$
is local with maximal ideal $\fm A_\fm$ for every prime ideal 
$\fm$ of $A$, we may therefore assume that $A$ is local
for the equivalence of  statements 1.,2.,3. and for the equivalence of 4.,5.,6.. 
In the following let $A$ be local with maximal ideal $\fm$.

We show  the equivalence of 4. and 5.. Let $\hat A$ be the $\fm$-adic completion of $A$, and $K \subset K'$ a finite field extension. We set $B:=A \otimes_K K'$ and  $C:=\hat A \otimes_K K' = 
 (\varprojlim A/\fm^k) \otimes_K K' =
 (A \otimes_K K' )^{\wedge,\fm B} = B^{\wedge,\fm B}$ (by Lemma \ref{lem.lim}).
Then $A \to B$ resp. $\hat A \to C$ are finite morphisms with fiber
$B(\fm) = B/\fm B=C/\fm C=C(\fm)$ being a semi-local ring with maximal ideals
$\bar\fm_1,...,\bar\fm_r$ by Lemma \ref{lem.sem0}.1.. 
By Lemma \ref{lem.sem0}.2.
$B$ resp. $C$ are semi-local rings with maximal ideals the  preimages $\fn_{i}=\bar\fm_i \cap B$ resp.  $\tilde\fm_{i}=\bar\fm_i \cap C$ in $B$ resp. $C$, $i=1,...,r$.\\
Now we use Proposition \ref{prop.sem} with $I=0$, $\fp = \fm$, $R=R_\fp = B$, and $\tilde R_\fp = C$. Then 
Proposition \ref{prop.sem}.2. implies 
$C= C^{\wedge,\tilde \fm_1}\oplus ... \oplus C^{\wedge,\tilde \fm_r}$, with $C_{\tilde \fm_i} = C^{\wedge,\tilde \fm_i} = B^{\wedge,\fn_i} =(B_{\fn_i})^{\wedge,\fn_i}$
complete local rings. We have that 
$$
\begin{array}{llll}
&B \text{ is regular } \iff 
B_{\fn_i} \text{ is regular for all  $i$}  \iff (B_{\fn_i})^{\wedge, \fn_i} \text{ is regular for all $i$},\\
&C\text{ is regular }  \iff 
C_{\tilde\fm_i} = (B_{\fn_i})^{\wedge, \fn_i} \text{ is regular for all  $i$}. 
\end{array}
$$
Hence $B$ is regular $\iff$ $C$ is regular. Since the
field extension $K \subset K'$ is arbitrary, it follows that
$B$ is geometrically regular iff $C$ is geometrically regular, which proves $4. \iff 5.$.

It remains to prove the equivalence of statements 1.,2.,3.,6.
with $(A,\fm)$ local.
The equivalence of 1. and 2. is proved in \cite[tag 0381]{Sta19}, the equivalence of 1. and 6. in \cite[Theorem 28.7 ]{Mat86}. 

Since 3. $\Rightarrow$ 1. is trivial, let us prove 1. 
$\Rightarrow$ 3.
 Assume 1. and let $K'$ be any field extension of $K$. 
By 1. $\Rightarrow$ 6. $A$ is  $\fm$-smooth over $K$ and 
from Lemma \ref{lem.ismooth}.3. we get that the (semi-local) $K'$-algebra
$B=:A \otimes_K K'$ is $\fm B$-smooth over $K'$.  
Again by Lemma \ref{lem.ismooth}.3. the local ring
$B_\fn$ is $\fm B_\fn$-smooth
for every prime ideal (maximal ideal) $\fn$ of $B$. By Lemma \ref{lem.sem0}.2. $\fm B \subset \fn$ and by Lemma \ref{lem.ismooth}.2. we have that $B_\fn$ is $\fn B_\fn$-smooth over $K'$. By
6. $\Rightarrow$ 1. implies that
$B_\fn$ is geometrically regular over $K'$, in particular a regular ring. Hence $B=A\otimes_K K'$ is regular, showing 3..
\end{proof}

\begin{remark} \label{rm.reg}
Let $(A,\fm)$ be a local Noetherian ring containing a field $K$. 
\begin{enumerate}
\item Let $A/\fm$ be separable over $K$. Then 
$$A \text{ is  $\fm$-smooth over  $K$}  \Leftrightarrow \text{$A$ is geometrically regular over $K$} \Leftrightarrow  \text{$A$ is regular.}$$
\item Let $K'$ be a finite field extension of $K$. Then $A'=A\otimes_K K'$ is a semi-local ring.
\end{enumerate}
\end{remark}
\begin{proof}
 1. follows from \cite[Theorem 28.7 and Lemma 1, page 216]{Mat86}.\\
2. Since $A\to A'$ is finite , $A'= A'_\fm$ is semi-local by Lemma \ref{lem.sem0}.2.
\end{proof}

\begin{example}{\em
We show that finiteness of the field extension is necessary.
Let $t$ be a variable, $S=\Q[t]\setminus \{0\}$ then $\Q(t)\otimes_{\Q} \C=S^{-1}\C[t]$. $\langle t-\alpha\rangle \cap S=\emptyset$ iff $\alpha$ is transzendental over $\Q$.  This implies that $\Q(t)\otimes_{\Q} \C$ has uncountably many maximal ideals, in particular, it is not semi-local.}
\end{example}

\begin{lemma}\label{lem.reg.fieldextension}
Let $K$ be a field and $K'$ be a separable algebraic field extension of $K$. Let $A$ be a $K'$-algebra.
$A$ is geometrically regular over $K$ if and only if $A$ is geometrically regular over $K'$.
\end{lemma}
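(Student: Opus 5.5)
We must show: for $K\subset K'$ separable algebraic and $A$ a Noetherian $K'$-algebra, $A\otimes_K L$ is regular for every finite extension $L$ of $K$ iff $A\otimes_{K'}L'$ is regular for every finite extension $L'$ of $K'$. By Proposition \ref{prop.greg} (equivalence of 1. and 2.) it suffices to test finite purely inseparable extensions on both sides. So the plan is to compare $A\otimes_K L$ and $A\otimes_{K'}(K'\otimes_K L)$ for $L/K$ finite purely inseparable.

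First, for $L/K$ finite purely inseparable and $K'/K$ separable algebraic, $K'\otimes_K L$ is a field. Indeed it is a domain: reduce to $K'/K$ finite by a direct limit, so $K'=K(\alpha)$ by the primitive element theorem. Then $K'\otimes_K L = L[t]/(f)$ with $f$ the minimal polynomial of $\alpha$ over $K$. Since $f$ is separable and $L/K$ is purely inseparable, $f$ stays irreducible over $L$: any monic factor of $f$ over $L$ has coefficients that are both separable over $K$, being symmetric functions of roots of $f$, and purely inseparable over $K$, hence lie in $K$. Also $L':=K'\otimes_K L=K'L$ is purely inseparable and finite over $K'$. We then have $A\otimes_K L\cong A\otimes_{K'}(K'\otimes_K L)=A\otimes_{K'}L'$.

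\emph{Direction ``geometrically regular over $K'$ $\Rightarrow$ over $K$''.} For each finite purely inseparable $L/K$, the ring $A\otimes_K L\cong A\otimes_{K'}L'$ is regular, since $L'/K'$ is finite. Proposition \ref{prop.greg}.2 then gives geometric regularity over $K$.

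\emph{Converse.} Let $L'/K'$ be finite purely inseparable, and suppose $\mathrm{char}\,K=p>0$ (otherwise there is nothing to show). Write $L'=K'(a_1,\dots,a_s)$ with $a_i^{p^{e}}=c_i\in K'$. The main obstacle is that the $c_i$ need not lie in $K$. To handle this, use that $K'/K$ separable implies $K'=K(K'^{p^e})$, so each $c_i$ is a polynomial over $K$ in finitely many $p^e$-th powers $b_j^{p^e}$ with $b_j\in K'$. Hence $c_i = d_i^{p^e}$-combinations, and $a_i$ lies in $K'\cdot L$, where $L := K(\{\lambda^{1/p^e}\})\subset \overline{K}$ is generated over $K$ by the $p^e$-th roots of the finitely many coefficients $\lambda\in K$ appearing. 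Concretely, if $c_i=\sum \lambda_{ij} b_j^{p^e}$, then $a_i=\sum \lambda_{ij}^{1/p^e} b_j$. This $L$ is a finite purely inseparable extension of $K$, and $L'\subset K'L=K'\otimes_K L$. By the first direction's identification, $A\otimes_{K'}(K'L)\cong A\otimes_K L$ is regular.

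It remains to descend regularity from $A\otimes_{K'}K'L$ to $A\otimes_{K'}L'$, using that $L'\subset K'L$ is a finite field extension. The map $A\otimes_{K'}L'\to A\otimes_{K'}K'L$ is faithfully flat, being free of finite rank. Regularity descends along flat local homomorphisms (\cite[Theorem 23.7]{Mat86}): for each prime $\fn$ of $A\otimes_{K'}L'$, choose a prime above it by going-up for the integral extension, and get that the local ring at $\fn$ is regular. Hence $A\otimes_{K'}L'$ is regular, and by Proposition \ref{prop.greg}.2, $A$ is geometrically regular over $K'$.
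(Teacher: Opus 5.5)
Your proof is correct. The paper gives no argument of its own here: it only refers to \cite[tag 07QH]{Sta19}. Your proposal is therefore a self-contained replacement for the citation, not a variant of a proof in the paper.

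Your route has three steps:
\begin{itemize}
\item You reduce both sides to finite purely inseparable extensions via Proposition~\ref{prop.greg} (1.$\Leftrightarrow$2.).
\item You prove two field-theoretic facts. First, $K'\otimes_K L$ is a field, purely inseparable and finite over $K'$. Second, every finite purely inseparable $L'/K'$ embeds into some $K'\otimes_K L$, using $K'=K(K'^{p^e})$.
\item You descend regularity along the finite free extension $A\otimes_{K'}L'\to A\otimes_{K'}K'L$ using \cite[Theorem 23.7]{Mat86}.
\end{itemize}
This is essentially the mechanism behind the cited Stacks lemma. Your version shows that only Proposition~\ref{prop.greg}, elementary field theory and flat descent of regularity are needed. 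The citation is shorter and uses the Stacks definition of geometric regularity (finitely generated field extensions).

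A few small points to tidy:
\begin{itemize}
\item In the descent step you need lying over, i.e.\ surjectivity on spectra for a faithfully flat (or injective integral) map, not going-up.
\item Replace the garbled sentence ``Hence $c_i=d_i^{p^e}$-combinations''. The correct observation is that $K(K'^{p^e})=K[K'^{p^e}]$ is the $K$-span of the multiplicatively closed set $K'^{p^e}$. This gives $c_i=\sum_j\lambda_{ij}b_j^{p^e}$ directly.
\item The compositum $K'L$ and the inclusion $L'\subset K'L$ only make sense inside a common field. Fix, for example, an algebraic closure of $L'$ and take the $\lambda_{ij}^{1/p^e}$ there. Injectivity of Frobenius then gives $a_i=\sum_j\lambda_{ij}^{1/p^e}b_j$.
\item You use the letter $L'$ for two different fields.
\item Noetherianity of $A$ is implicit in the definition of geometric regularity. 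It is what makes both $A\otimes_{K'}L'$ and $A\otimes_{K'}K'L$ Noetherian, as finite $A$-algebras, so that Theorem 23.7 applies.
\end{itemize}
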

\begin{proof}
The lemma is proved in \cite[tag 07QH]{Sta19}.
\end{proof}

\begin{example}\label{ex.reg.fieldextension}{\em
We show that the separability of the field extension is necessary in the above lemma.
Let $K$ be a field of characteristic $p>0$. Let $K:=K(t^p)\subset  K(t)=:K'$ for an element $t$ transcendental over $K$, which is a non-separable extension. 
Let $z:=t\otimes 1 - 1\otimes t\in K'\otimes_KK'$. Then $z\neq 0$ but $z^p=0$. Let $A:=K'[[x]]$ then we obtain from lemma \ref{lem.smooth} and proposition \ref{prop.greg} that $A$ is geometrically regular over $K'$. Now the canonical map $K'\otimes_KK'\longrightarrow K'\otimes_K A$ is injective since the canonical injection $i:K'\longrightarrow A$ is a split map\footnote{The projection $p:A\longrightarrow K'$ defined by $p(\sum_{i\geq 0}a_ix^i)=a_0$ is $K$-linear and $p\circ i=id_{K'}$. The corresponding projection induced by the tensor product $p'=id_{K'}\otimes p :K'\otimes_K A\longrightarrow k'\otimes_KK'$ is also $K$-linear and $p'\circ i'=id_{K'\otimes_KK'}$ with $i'=id_{K'}\otimes i$.}. This implies that $z$ considered as element in $K'\otimes_K A$ is a non-zero nilpotent element and therefore $A$ is not geometrically regular over $K$.}
\end{example}

By the following important theorem of Grothendieck (\cite{Mat86} Theorem 28.9), smoothness of a morphism is equivalent to flatness and smoothness of the fibers.

\begin{theorem}\label{thm.Groth}
Let $(A,\fm) \to (B,\fn)$ be a morphism of Noetherian local rings. Let $\bar B=B/\fm B$ and $\bar \fn=\fn/\fm B$. Then $B$ is $\fn$-smooth over $A$ iff $\bar B$ is $\bar \fn$-smooth (geometrically regular) over $A/\fm$ and $B$ is flat over $A$.
\end{theorem}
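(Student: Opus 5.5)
The statement is Grothendieck's theorem (\cite{Mat86} Theorem 28.9), and I would prove it along the lines of Matsumura, organising the argument around a presentation of the completion. By Lemma \ref{lem.ismooth}.1 I may replace $B$ by its $\fn$-adic completion $\hat B$ throughout. This is harmless for both sides of the equivalence. Flatness over $A$ is unaffected because $B\to\hat B$ is faithfully flat (Lemma \ref{lem.compl}.10). Smoothness of the fiber is unaffected by Lemma \ref{lem.ismooth}.1 applied to $k:=A/\fm\to\bar B$. I would also use freely that $\fn$-smoothness of $B$ over $A$ depends only on the images of $B$ modulo $\fn^m$, since $u$ is continuous.

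\emph{Necessity.} Suppose $B$ is $\fn$-smooth over $A$. Then Lemma \ref{lem.ismooth}.3 with $A'=A/\fm$, followed by Lemma \ref{lem.ismooth}.1, gives that $\bar B$ is $\bar\fn$-smooth over $k$. Proposition \ref{prop.greg} then identifies this with geometric regularity. For flatness I would apply the local flatness criterion (\cite[Theorem 22.3]{Mat86}). It suffices to show that $B_j:=B/\fm^{j+1}B$ is flat over $A_j:=A/\fm^{j+1}$ for every $j$. By Lemma \ref{lem.ismooth}.3, $B_j$ is $\fn$-smooth over $A_j$, so it suffices to treat an Artinian base. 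There I would compare $B_j$ with the free $A_j$-module $A_j\otimes_k\bar B$, up to completion. To do so, I would use the lifting property repeatedly along the square-zero extensions $A_{i+1}\to A_i$ to build compatible $A_j$-algebra maps. These maps should show that $\fm^iB/\fm^{i+1}B\cong(\fm^i/\fm^{i+1})\otimes_k\bar B$, which is exactly the graded criterion for flatness.

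\emph{Sufficiency.} Suppose $B$ is flat over $A$ (now complete) and $\bar B$ is $\bar\fn$-smooth over $k$. I would present $B=P/J$, where $P$ is a complete local ring that is $\fn_P$-smooth over $A$. The natural choice is $P=A'[[x]]$ with $A'$ a suitable complete local flat $A$-algebra realising the residue field of $B$, so that $P$ is smooth over $A$ by Lemma \ref{lem.smooth}. Given $u:B\to C/N$ as in Definition \ref{def.smooth}, I would lift $u\circ(P\to B)$ to a map $\tilde u: P\to C$. The obstruction to $\tilde u$ factoring through $B$ is the induced $B$-linear map $J/J^2\to N$. Changing $\tilde u$ by a continuous derivation $D:P\to N$ changes this obstruction by the composition $J/J^2\to\hat\Omega_{P/A}\otimes_P B\xrightarrow{D}N$. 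Hence the whole problem reduces to showing that
\[
J/J^2\longrightarrow \hat\Omega_{P/A}\otimes_P B
\]
is split injective. Here $\hat\Omega$ denotes completed differentials, and $\hat\Omega_{P/A}\otimes_P B$ is free.

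\emph{Main obstacle: the split injectivity.} This is the step I expect to be hardest. I would first reduce modulo $\fm$. Flatness of $B$ over $A$ gives $\operatorname{Tor}_1^A(B,k)=0$, hence $J\otimes_A k\cong \bar J:=J/\fm J$, the kernel of $\bar P\to\bar B$. On the fiber, the map $\bar J/\bar J^2\to\hat\Omega_{\bar P/k}\otimes\bar B$ is split injective. This follows from the smoothness of $\bar B$ over $k$ via the same obstruction argument run in reverse, or equivalently from the Jacobian-type criterion in \cite[Theorem 28.4]{Mat86}. It remains to lift the splitting from the fiber to $B$. For this I would use Nakayama's lemma (Lemma \ref{lem.compl}.10) on the finite $B$-module $J/J^2$. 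Its reduction mod $\fm$ agrees with $\bar J/\bar J^2$, again by flatness, and the target is free, so a map that is split injective mod $\fm$ is split injective. The delicate points are exactly here: controlling $J/J^2\otimes k$ via flatness, and working with completed rather than ordinary differentials.
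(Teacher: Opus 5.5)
The paper does not prove this statement; it quotes Grothendieck's theorem from \cite[Theorem 28.9]{Mat86}. It then uses the theorem for arbitrary Noetherian $A\to R$ (Remark \ref{rm.sopen}), so mixed characteristic and inseparable residue field extensions are in scope. Your reductions to $\hat B$ and the smoothness of the closed fibre are fine. Both main steps, however, rest on an auxiliary ring that exists only in special cases, so there are two genuine gaps.

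\emph{Flatness.} The comparison ring $A_j\otimes_k\bar B$ needs a ring map $k=A/\fm\to A_j$, i.e.\ a coefficient field of $A_j$. In mixed characteristic there is none: for $A=\mathbb{Z}_{(p)}$ there is no ring map $\mathbb{F}_p\to\mathbb{Z}/p^2$. So there is nothing to lift into, and the graded isomorphism is never obtained; as written, the argument covers only $A$ containing a field. What you actually need is an $A$-flat complete Noetherian local ring $P$ with a surjection $P\to\hat B$, with kernel $J$. Given such a $P$, lift $\mathrm{id}_{\hat B}$ successively through the square-zero steps $P/J^{n+1}\to P/J^n$. This makes $\hat B$ an $A$-module direct summand of $P$, hence flat. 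But producing such a $P$ in general is a genuine existence statement (flat extensions with prescribed residue field, or Cohen factorization), not a consequence of the lifting property of $B$.

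\emph{Sufficiency.} Lemma \ref{lem.smooth} makes $A'[[x]]$ smooth over $A'$, not over $A$. In fact $P=A'[[x]]$ cannot be $\fn_P$-smooth over $A$ when $K=A'/\fm A'$ is inseparable over $k$. By Lemma \ref{lem.ismooth}.3 and Proposition \ref{prop.greg}, its closed fibre $K[[x]]$ would then be geometrically regular over $k$. Yet $K[[x]]\otimes_k k'$ contains the non-reduced ring $K\otimes_k k'$ for a suitable finite purely inseparable $k'/k$.

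This case does occur under your hypotheses. The paper's example $K(t)[x]^{\wedge,\langle x^p-t\rangle}$ is smooth over $K(t)$ with residue field $K(\sqrt[p]{t})$. It is the closed fibre of the flat, smooth family obtained by completing $K(t)[[s]][x]$ at $\langle s,x^p-t\rangle$. Already for $A=K(t)$ and $B$ that ring, there is not even a $K(t)$-algebra map $K(\sqrt[p]{t})\to B$: a $p$-th root $y=x+z$ of $t$ would force $z^p=t-x^p$, which is a uniformizer of this discrete valuation ring and cannot be a $p$-th power.

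So the presentation $B=P/J$ with $P$ smooth over $A$, on which your whole obstruction argument depends, is not available. If $K/k$ is finitely generated you can instead take $P$ to be the completion of $A[X_1,\dots,X_N]_\fq$. In general you need a different argument. For example, reduce by continuity to $A$ Artinian: if $u(\fn^m)=0$ then $v(\fm^{2m})\subset N^2=0$. Then lift along the nilpotent thickening $A\to A/\fm$, where flatness of $B$ is exactly what kills the obstruction; this avoids presentations altogether.
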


\medskip

Now let  $K[[x]], \,  x = (x_1,\cdots, x_n),$ be the formal power series ring over the field $K$ and  $J$ an ideal in  $\x K[[x]]$. If $J$ is generated by  $f_1,\ldots,f_m$ we denote by $Jac(J)\in Mat(m,n,K[[x]])$ the Jacobian matrix $(\partial f_j / \partial x_i)$ and by  
$I_k(Jac (J))$ the ideal generated by the $(n-k)\times (n-k)$-minors of $Jac(J)$. This is the $k$-th Fitting ideal of the cokernel of the transpose of $Jac(J)$, considered as map 
$(\partial f_j / \partial x_i)^t: K[[x]]^m \to K[[x]]^n$, and thus independent of the chosen generators $f_j$.\footnote{$I_n(Jac(I))$, the $n$-th Fitting ideal of $K[[x]]^n$, is defined as $K[[x]]$.}\\

For analytic local rings the Jacobian criterion characterizes regularity:

 \begin{lemma} \label{lem.jac}
Let $K$ be any field and $J\subset \x$ an ideal of $K[[x]]$.  If $\dim K[[x]]/J=d$, the following are equivalent.
\begin{enumerate}
\item $K[[x]]/J$ is $\x$-smooth over $K$.
\item $K[[x]]/J$ is a regular local ring. 
\item $K[[x]]/J$ is geometrically regular over $K$.
\item $K[[x]]/J \cong K[[y_1,\dots,y_d]].$ 
\item $I_d(Jac (J)) = K[[x]]$ {\em (Jacobian criterion)}.\\
In particular, if $J=\langle f\rangle$ is a principal ideal then
$K[[x]]/\langle f\rangle$ is regular iff the partial derivatives of $f$ generate $ K[[x]]$.
\end{enumerate}
\end{lemma}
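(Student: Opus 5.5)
We would prove the Jacobian criterion lemma by arranging the implications as $1.\Rightarrow 3.\Rightarrow 2.\Rightarrow 4.\Rightarrow 1.$ and then treating 5. separately via $4.\Leftrightarrow 5.$.

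\emph{The cycle of implications.} Write $B=K[[x]]/J$; it is a complete Noetherian local ring with maximal ideal $\fn=\x B$ and residue field $K$. Since $B/\fn=K$ is trivially separable over $K$, Remark \ref{rm.reg}.1 gives directly that $\fn$-smoothness over $K$, geometric regularity over $K$ and regularity of $B$ coincide. This yields $1.\Leftrightarrow 2.\Leftrightarrow 3.$ For $2.\Rightarrow 4.$ we use that $B$ is regular of dimension $d$, so $\fn$ is generated by $d$ elements $y_1,\dots,y_d$; we may take them to be images of suitable $K$-linear combinations of the $x_i$, because $\fn/\fn^2$ is a quotient of $\x/\x^2$. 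Then $K[[y]]\to B$ is surjective, since $B$ is complete with residue field $K$. Both rings are domains of dimension $d$, so the kernel is a prime of height $0$, hence zero. For $4.\Rightarrow 1.$ apply Lemma \ref{lem.smooth}.2 to $K[[y]]$.

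\emph{Reduction of $4.\Leftrightarrow 5.$ to linear algebra.} Reduce the Jacobian matrix modulo $\x$. Since $I_d(Jac(J))$ is an ideal of the local ring $K[[x]]$, it is the unit ideal iff some $(n-d)$-minor of $Jac(J)$ is a unit. This happens iff the matrix $Jac(J)(0)$ of linear parts has rank $\ge n-d$, i.e.\ iff the linear parts of $f_1,\dots,f_m$ span a subspace of $\x/\x^2$ of dimension $\ge n-d$. Now $\dim_K \fn/\fn^2 = n-\dim_K\big((J+\x^2)/\x^2\big)$, and $(J+\x^2)/\x^2$ is exactly the span of the linear parts. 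Hence 5.\ is equivalent to $\operatorname{edim}B\le d$, i.e.\ to $\operatorname{edim}B=d=\dim B$, which is regularity. This gives $5.\Leftrightarrow 2.$

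For the principal case $J=\langle f\rangle$, $f\in\x$, $f\neq 0$, we have $d=n-1$, so $I_{n-1}$ is generated by the $1\times1$ minors $\partial f/\partial x_i$; the degenerate case $f=0$ is trivial.

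\emph{Main obstacle.} The identification of $(J+\x^2)/\x^2$ with the span of the linear parts is independent of the characteristic, so positive characteristic causes no trouble. What needs care is that the Fitting ideal $I_d$ is a unit ideal iff the rank condition holds at the closed point, and that it is independent of generators; both are noted in the paper. One more point to check is that $\dim B=d$ is used correctly when $J$ is not generated by $n-d$ elements: the argument above only uses the inequality $\operatorname{edim}\ge\dim$ in a Noetherian local ring, so no such generation assumption is needed.
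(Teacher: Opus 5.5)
Your proof is correct. For $1.\Leftrightarrow2.\Leftrightarrow3.$ it is the paper's own argument: both rest on Remark~\ref{rm.reg}, part~1, which you cite correctly, whereas the paper's reference to part~2 is a slip.

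The routes differ for items 4 and 5.

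For $2.\Leftrightarrow4.$ the paper simply quotes Cohen's structure theorem (Theorem~\ref{thm.cohen}). You reprove the relevant case directly, which is easy here because the coefficient field $K$ is given. The map $K[[y_1,\dots,y_d]]\to B$ is surjective, and its kernel is a prime $P$ with $\dim K[[y]]/P=d$, so the map must be injective.

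For the Jacobian criterion, the paper proves $5.\Rightarrow4.$ with the implicit function theorem for formal power series. A nonvanishing $(n-d)$-minor of $Jac(J)$ at $0$ lets one eliminate $n-d$ variables, and $\dim B=d$ then disposes of the remaining generators. The paper treats $4.\Rightarrow5.$ only in a one-line remark. You instead prove $5.\Leftrightarrow2.$ at once. The image of $J$ in $\x/\x^2$ is the span of the linear parts of the $f_j$, and its dimension is $\rank Jac(J)(0)$, so 5. says exactly $\operatorname{edim}B\le d=\dim B$.

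Your route is more elementary and self-contained, and visibly characteristic-free. It treats both directions on an equal footing, including the one the paper leaves essentially unexplained. It also needs neither the implicit function theorem nor the bound $n-d\le m$ from Krull's height theorem. The paper's route directly exhibits coordinates in which $J$ is generated by $n-d$ of the $f_j$, these forming part of a regular system of parameters.

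One small correction concerns the principal case. There $f=0$ is not ``trivial'' but must be excluded: for $n\ge1$, $K[[x]]$ is regular while the partials of $0$ generate the zero ideal. The claim holds for $f\neq0$ (so $d=n-1$), which is what your argument actually proves.
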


\begin{proof}
The equivalence of 1., 2. and 3. follow from Remark \ref{rm.reg}.2.
($K$ coincides with the residue field of $K[[x]]/J$),
the equivalence of 2. and 4.  from Cohen's structure theorem (Theorem \ref{thm.cohen}) below. 

 For the Jacobian criterion let $J=\langle f_1,...,f_m \rangle$
 and we note that $n-d\le m$ (by \cite[Theorem B.2.1.]{GLS07}).
That  4. implies  5. follows from the definition of $I_n(Jac (J))$, while 
5. implies 4. is a consequence of the implicit mapping theorem for formal power series:
$I_d(Jac (J)) = K[[x]]$ is equivalent to $I_d(Jac (J))$ containing
a unit of $K[[x]]$, which is equivalent to the existence of an $(n-d)$-minor of $Jac(J)|_{x=0}$, which is non-zero in $K$. By \cite[Theorem I.1.18]{GLS07} we get $K[[x]]/J\cong K[[y_1,\dots,y_d]]$. 
\end {proof}

\begin{theorem}[Cohen's structure theorem]\label{thm.cohen}
Let $(R,\fm)$ be a Noetherian complete local ring, containing a field\,\footnote{Also called the equal characteristic case. Then $R$ contains a {\em coefficient field  K},  i.e. $K$ maps isomorphically to $k(\fm)$
under $R\to R/\fm$.} and $k(\fm) = R/\fm$ its residue field. Then
$$R \cong k(\fm)[[x_1,...,x_n]]/J,$$
for some ideal $J$. If $\fm = \langle m_1,...,m_n\rangle$ and
$\phi : k(\fm)[[x_1,...,x_n]] \to R$ the map $x_i \mapsto m_i$, then $\phi$ is surjective and $J = \ker(\phi)$.\\
If $R$ is regular of dimension $d$, then 
$$R \cong k(\fm) [[x_1,...,x_d]].$$
For a proof see see \cite[Lemma 10.155.10, tag 0C0S]{Sta19} and \cite[Theorem 6.3.10]{GP08}.
\end{theorem}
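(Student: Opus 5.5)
The first claim, that $R\cong k(\fm)[[x_1,\ldots,x_n]]/J$, comes from choosing a coefficient field and mapping variables to generators of $\fm$. Since $R$ contains a field, I invoke the existence of a coefficient field $K\subset R$, i.e.\ a subfield mapping isomorphically onto $k(\fm)$. In characteristic zero this follows from Zorn's lemma and Hensel's lemma: take a maximal subfield and show that its image in $k(\fm)$ is all of $k(\fm)$, lifting algebraic elements via simple roots. In positive characteristic one uses a $p$-basis of $k(\fm)$ and the subrings $\bigcap_n R^{p^n}$. This existence result is the genuine obstacle. I would not reprove it but cite the standard references given (Stacks tag 0C0S, \cite[Theorem 6.3.10]{GP08}).

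Given $K$ and generators $m_1,\ldots,m_n$ of $\fm$, I define $\phi:K[[x_1,\ldots,x_n]]\to R$ by $x_i\mapsto m_i$. This is well defined because $R$ is $\fm$-adically complete: a power series $\sum a_\alpha x^\alpha$ maps to a sum whose partial sums form a Cauchy sequence, as the terms of degree $k$ lie in $\fm^k$.

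Next I show $\phi$ is surjective by successive approximation. Every element of $\fm^k/\fm^{k+1}$ is a $K$-linear combination of monomials of degree $k$ in the $m_i$, since $K$ maps onto $R/\fm$. So for $r\in R$ I can choose homogeneous polynomials $p_k$ of degree $k$ with $r-\sum_{k\le N}p_k(m)\in\fm^{N+1}$. Then $\phi(\sum_k p_k)=r$, because $\bigcap_N \fm^{N}=0$ (Lemma \ref{lem.compl}.9). An alternative route is the standard fact that a map of complete modules that is surjective on associated graded modules is surjective. Either way, setting $J=\ker\phi$ gives the first statement with $k(\fm)\cong K$.

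For the regular case, $\dim R=d=\dim_{k(\fm)}\fm/\fm^2$, so by Nakayama I may take $n=d$, with $m_1,\ldots,m_d$ minimal generators. Then $\dim K[[x_1,\ldots,x_d]]=d=\dim R=\dim K[[x]]/J$. Since $K[[x_1,\ldots,x_d]]$ is an integral domain of dimension $d$, any nonzero ideal $J$ would satisfy $\dim K[[x]]/J<d$. Hence $J=0$ and $R\cong k(\fm)[[x_1,\ldots,x_d]]$.
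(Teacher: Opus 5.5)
Your proposal is correct and follows essentially the same route as the paper. The paper gives no proof of its own beyond citing Stacks (tag 0C0S) and Greuel--Pfister (Theorem 6.3.10), and the cited proof has the same steps as yours: it takes the coefficient field as given, gets surjectivity of $k[[x_1,\ldots,x_n]]\to R$ from completeness, and in the regular case shows the kernel is zero because a nonzero ideal in the domain $k[[x_1,\ldots,x_d]]$ would lower the dimension below $d$.
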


Using the Jacobian criterion, we can describe the singular locus (see Definition \ref{def.sing}) for analytic algebras in a form which is very useful. 

\begin{lemma}\label{lem.slocus}
Let $B:= K[[x]]/J$, $J$ an ideal in $\x K[[x]]$.
\begin{enumerate}
\item If  $B$ is pure $d$-dimensional (i.e. $\dim B/P =d$ for all minimal primes $P \in \Spec B$), then
$$\Sing (B) = V(I_d(Jac(J))\cdot B) \subset \Spec B,$$
which we identify with $V(I+I_d(Jac(J)) \subset \Spec K[[x]].$
\item If $B$ is not pure dimensional, let $P_1,\dots,P_r$ be the minimal primes  of $B$. Then $B/P_i$ is pure dimensional and then  
$$\Sing (B) = \bigcup_{i=1}^r \Sing (B/P_i) \ \cup  \ \bigcup_{i \ne j}  V(P_i)\cap V(P_j)\subset \Spec B.$$
\end{enumerate}
\end{lemma}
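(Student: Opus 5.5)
The plan is to reduce everything to a pointwise statement about the local rings $B_P$ for $P\in\Spec B$, and then use the Jacobian criterion (Lemma \ref{lem.jac}) after completing. Write $S=K[[x]]$ and let $Q\subset S$ be the prime corresponding to $P$, so $Q\supset J$. The key input is a Jacobian criterion at non-maximal primes. For a complete local ring such as $S/J$, the usual Jacobian criterion holds at every prime when the coefficient field is perfect (see Matsumura, Theorems 30.3 and 30.10, or the Nagata--Zariski form via derivations). In general one passes to $K'\otimes_K S$ for a finite purely inseparable $K'$, or uses the fact that $\partial/\partial x_i$ together with the $K$-derivations of $K$ give enough derivations: $S$ is a ``complete regular local ring with a weak Jacobian condition''. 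I would cite this as the statement
$$B_P \text{ regular} \iff \operatorname{rank}\bigl(Jac(J)\bmod Q\bigr)=\operatorname{ht}(JS_Q)\quad \text{(with respect to all derivations)}.$$

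For part 1, assume $B$ is pure $d$-dimensional. Since $S$ is a catenary regular local ring, pure dimensionality gives $\operatorname{ht}(JS_Q)=n-d$ for every $Q\supset J$, because $\dim S/Q+\operatorname{ht}Q=n$ and every minimal prime of $J$ has height $n-d$.

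The inequality $\operatorname{rank}(Jac(J)(Q))\le \operatorname{ht}(JS_Q)$ always holds, with equality exactly at regular points. Hence $B_P$ is regular iff some $(n-d)$-minor of $Jac(J)$ is not in $Q$, i.e. iff $Q\notin V(I_d(Jac(J)))$. Here $I_d$ denotes the $(n-d)$-minors, matching the convention before Lemma \ref{lem.jac}. This gives $\Sing(B)=V(I_d(Jac(J))B)$, and under $\Spec B=V(J)\subset\Spec S$ this is $V(J+I_d(Jac(J)))$. For $Q$ the maximal ideal this reduces exactly to Lemma \ref{lem.jac}.5, which is a consistency check. Fitting-ideal invariance makes the statement independent of the generators.

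For part 2, let $P_1,\dots,P_r$ be the minimal primes. Each $B/P_i$ is a domain, hence pure dimensional, so part 1 applies to it. Two facts suffice:

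$\bullet$ If $P$ lies on two components, then $B_P$ has two minimal primes. A regular local ring is a domain, so $B_P$ is singular.

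$\bullet$ If $P$ lies on only $V(P_i)$, then $B_P$ has the unique minimal prime $P_iB_P$.

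In the second case, if $B_P$ is regular it is a domain, so $P_iB_P=0$ and $B_P=(B/P_i)_P$. Conversely, if $(B/P_i)_P$ is regular, then $B_P$ is regular provided $B_P$ is reduced at $P_i$. So this step needs care: when $B$ is non-reduced, a point on a single embedded-free but non-reduced component is singular in $B$ yet may be regular in $B/P_i$.

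I expect this to be the main obstacle. I would handle it by noting that the formula as stated implicitly presumes $B$ reduced, or by including $V(\text{nilradical annihilator})$; I would first check whether the paper's later use assumes $J$ radical and, if so, add that hypothesis. Under reducedness, $P_iB_P=0$ holds automatically, since $B_P$ is reduced with a single minimal prime. This yields the stated union.
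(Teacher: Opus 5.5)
The gap is in Part 1, at the step where you pass from a Jacobian criterion ``with respect to all derivations'' to the conclusion that $B_P$ is regular iff some $(n-d)$-minor of $Jac(J)$ is not in $Q$. The matrix $Jac(J)$ uses only $\partial/\partial x_1,\dots,\partial/\partial x_n$. These satisfy the weak Jacobian condition on $K[[x]]$ when $\chara K=0$. They also satisfy it when $K$ is perfect: then $K[[x]]$ is finite over $K[[x]]^p=K[[x_1^p,\dots,x_n^p]]$ and $\operatorname{Der}(K[[x]])$ is free on the $\partial/\partial x_i$. They do not satisfy it for imperfect $K$, and there your equivalence, and the lemma itself, is false.

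Let $\chara K=p$, $a\in K\setminus K^p$ and $J=\langle x^p+ay^p\rangle\subset K[[x,y]]$. Then $J$ is prime, $B$ is a one-dimensional domain, and $I_1(Jac(J))=0$. Hence $V(I_1(Jac(J))B)=\Spec B$, although $B_{\langle 0\rangle}=\Quot(B)$ is a field and so regular. This is essentially the paper's own example following Lemma \ref{lem.smphi}.

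Passing to $K'\otimes_K K[[x]]$ with $K'$ purely inseparable does not help, because that tests geometric regularity, not regularity. Here $K(a^{1/p})\otimes_K B=K(a^{1/p})[[x,y]]/\langle (x+a^{1/p}y)^p\rangle$ is non-reduced at every point.

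What your argument does prove for arbitrary $K$ is the inclusion $\Sing(B)\subseteq V(I_d(Jac(J))B)$. Suppose the minor built from $f_{j_1},\dots,f_{j_{n-d}}$ is not in $Q$. Applying the $\partial/\partial x_i$ to a relation $\sum a_jf_j\in Q^2S_Q$ shows these elements are linearly independent in $QS_Q/Q^2S_Q$. So they are part of a regular system of parameters of $S_Q$ and generate a prime of height $n-d$ contained in $JS_Q$. That prime equals $JS_Q$, because every minimal prime of $JS_Q$ has height $n-d$.

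The reverse inclusion needs an extra hypothesis: $\chara K=0$ or $K$ perfect, or, when $[K:K^p]<\infty$, a Jacobian matrix enlarged by derivations coming from a $p$-basis of $K$. You should add this to Part 1, just as you add reducedness to Part 2.

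Your analysis of Part 2 is correct and more careful than the paper's. That formula involves no Jacobians and holds once $B$ is reduced; without reducedness it can fail. For example, for $J=\langle x^2\rangle\cap\langle y,z\rangle\subset K[[x,y,z]]$ the generic point of $V(x)$ is singular in $B$ but lies in none of the sets on the right.

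For comparison, the paper's proof has both defects. It completes $B_P$ and invokes Theorem \ref{thm.cohen} and Lemma \ref{lem.jac}. But Lemma \ref{lem.jac} only concerns the closed point with coefficient field $K$, and a Cohen presentation of $\widehat{B_P}$ over $k(P)$ bears no relation to the derivatives $\partial/\partial x_i$ of the generators of $J$. It also identifies $B_P$ with $(K[[x]]/P_i)_P$, which presupposes that $B$ is reduced.

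Your rank-versus-height route is the right framework, but as written it proves the lemma only under these two additional hypotheses.
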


In the first case, if we identify $V(J) \subset \Spec K[[x]]$ with 
$\Spec B$,  then $\Sing(B) = V(J+I_d(Jac(J))$ is a subscheme of $\Spec K[[x]]$ and carries a natural scheme structure given by the ideal $J+I_d(Jac(J)) \subset K[[x]]$.  In general we endow  $\Sing(B)$ with its reduced structure,  which is given by the ideal 
$I(\Sing(B)) = \bigcap_{P\in \Sing (B)} P$.

\begin{definition}\label{def.iso}
We say that  $K[[x]]/J$ (or $J$) has an {\em isolated singularity} (at 0) if the maximal ideal  $\x$ is an isolated point of $\Sing(K[[x]]/J)$ or if $K[[x]]/J$ is regular, i.e. 
 $\Sing(K[[x]]/J) =\{\x\}$ or $\emptyset$.
\end{definition}

\begin{proof} [Proof of the lemma] Let $P\in \Spec B$ and $B_P$ regular of dimension $d$. Then the
completion $\hat B_P$ is regular of dimension $d$ (Remark \ref{rm.sing}) and  $\hat B_P \cong k(P)[[x_1,...,x_d]]$ by Theorem \ref{thm.cohen}.
From Lemma \ref{lem.jac}.3 we see that $\hat B_P$ and hence  $B_P= K[[x]]_P/J_P$ is irreducible and from Lemma \ref{lem.jac}.4 that $P \notin V(I_d(Jac(J)))$.

Conversely, let $P\notin \bigcup_{i=1}^r \Sing (B/P_i) \ \cup  \ \bigcup_{i \ne j}  V(P_i)\cap V(P_j)$. Then there exists a minmal prime $P_i$ of $B$ such that $P \in V(P_i)$ and 
$P \notin V(P_j)$ for $j\ne i$. Then $B_P$ is
pure $d$-dimensional, $d= \dim B/P_i$, and $P\notin \Sing (B/P_i)$, i.e. $P\notin V(I_d(K[[x]]/P_i))$. Lemma \ref{lem.jac} implies that $K[[x]]/P_i$ is regular. Hence $B_P = (K[[x]]/P_i)_P$ is regular by Remark \ref{rm.sing}.
\end{proof}
 
\medskip

 Let us now consider families. 
 
 \begin{definition}\label{def.smooth1}
 Let $\phi: A \to R$ be a morphism of Noetherian rings.  $\phi$ is called {\em flat} (resp. {\em smooth}) at $ \fn\in \Spec R$,  if the induced morphism of local rings,
 $\phi_{\fn}: A_\fp \to R_\fn$,  $\fp := A\cap \fn$,
 is flat (resp. $ \fn R_{ \fn}$-smooth). $\phi$ is called  flat,  (resp. smooth) if this holds at every $\fn \in \Spec R$. 
We denote the {\em smooth locus} of $\phi$ by
 $$\Sm(\phi) := \{\fn \in \Spec R \mid \phi \text{ is smooth at } \fn\}$$
 and by 
 $$\NSm(\phi) := \{\fn \in \Spec R \mid \phi \text{ is not smooth at } \fn\}$$
 the {\em non-smooth locus} of $\phi$.
\end{definition}

\begin{remark}\label{rm.sopen}{\em
(1)  By Grothendieck (Theorem \ref{thm.Groth}) $\phi$ is smooth at $\fn$ iff 
$\phi$ is flat at $\fn$ and  if the fiber $R(\fp)_\fn = R_\fn(\fp)=R_\fn\otimes_{A_\fp}k(\fp)$ of $\phi_{\fn}$ is  $\fn R_\fn(\fp)$-smooth (equivalently geometrically regular, by Proposition \ref{prop.greg}) over $k(\fp)$. 

(2) In general the smooth locus may not be open, but if $\phi: A \to R$ is of finite presentation  (i.e. $R \cong A[x_1, \ldots, x_n]/\langle f_1, \ldots, f_m\rangle$)
then $\Sm (\phi)$ is open in $\Spec R$. This follows since the flat locus as well as the locus, where the fiber is geometrically regular, are open by by \cite[IV$_3$, Theorem 11.1.1]{EGA} and  \cite[IV$_3$, Theorem 12.1.6]{EGA}. }
\end {remark}

\begin{notation}\label{rm.notation}{\em
Now  let $I \subset R$ be an ideal such that $R/I$ is finite over $A$. 
 For $\fp \in \Spec A$ the completed fiber
$(R_\fp/\fp R_\fp)^{\wedge,I}=\hat R(\fp)$ is a semi-local $k(\fp)$-algebra by Proposition \ref{prop.sem}. To make the dependence on $\fp$ visible, we denote its maximal ideals by 
$\fm_{\fp,1},...,\fm_{\fp,r_\fp}$ instead of $\fm_{1},...,\fm_{r}$
(and similar with the ideals $\bar \fm_i$ and the other ideals derived from it).
Then
 $$\hat R(\fp) \cong   \hat R(\fp)_{\fm_{\fp,1}}\oplus\ldots\oplus \hat R(\fp)_{\fm_{\fp,r_\fp}}$$
 is semi-local  with $\hat R(\fp)_{\fm_{\fp,i}}$ a complete local ring for each $i=1,...,r_\fp$, which contains the residue field $k(\fp)$ of $A_\fp$. By Theorem \ref{thm.cohen}
$$\hat R(\fp)_{\fm_{\fp,i}} \cong k(\fm_{\fp,i})[[x_1,...,x_n]]/J_{\fp,i}$$
  for some ideal $J_{\fp,i}$, and the residue field $k(\fm_{\fp,i})$ of $\hat R(\fp)_{\fm_{\fp,i}}$ is  finite over $k(\fp)$, but not necessarily separable as the following example shows.}
 \end {notation}
 
\begin{example} \label{ex.nsep}{\em
Let $K$ be a field of characteristic $p>0$, $A=K[t]$, $P=A[x]$,
$I=\langle x^p -t\rangle$ and $R=P^{\wedge,I}$. 
For $\fp=\langle 0 \rangle \in \Spec A$ we have
$$ \hat R(\fp) = K(t)[x]^{\wedge,I}. $$
This is a complete local ring with maximal ideal 
$\fm = \langle x^p -t\rangle  \hat R(\fp)$ (by Proposition \ref{prop.sem}) and residue field 
$k(\fm) =  K(t)[x]/ \langle x^p -t\rangle = K(\sqrt[p]{t})$, which is not separable over $K(t)$. By Cohen we have $ \hat R(\fp) \cong  K(\sqrt[p]{t})[[x]]$.}
\end{example}

\section{Applications to singularities}\label{sec.appl}
We want to apply  Theorem \ref{thm.scont} to prove the semicontinuity of several singularity invariants in families parametrized by an arbitrary Noetherian ring.
Let us  first prove some results about flatness and regular sequences that will be used when we consider complete intersection singularities.
\medskip
 
Let $F_1,...,F_m$ be elements of a ring $R$ and $M$ an $R$-module. The sequence $F_1,...,F_m$ is called an {\em $M$-regular sequence} or an {\em $M$-sequence} if, for $i=1,...,m$,
$F_i$ is a non-zero divisor of $M/\langle F_1,..., F_{i-1}\rangle M$ and if $M/\langle F_1,..., F_m\rangle M\ne 0$.

\begin{proposition}\label{prop.reg}
Let $A\to R$ be a morphism of Noetherian rings,  
$I\subset R$ an ideal,
$F_1,...,F_m$ elements of $R$, and $M$ a finitely generated $R$-module. 
Let  ${}^\wedge$ denote  the $I$-adic completion. 
For $\fp \in \Spec A$ let 
$F_i(\fp)$ resp. $\hat F_i(\fp)$ denote the image of $F_i$ 
in $R(\fp) = R\otimes_A k(\fp)$ resp. in $\hat R(\fp) = R(\fp)^{\wedge}$.

 \begin{enumerate}
\item [(i)] 
The following are equivalent.
 \begin{enumerate}
 \item   
 $M/ \langle F_1,..., F_m\rangle M$  is flat over $A$ and $F_1,\ldots,F_m$  is an $M$-regular sequence.
 \item $M$ is flat over $A$ and for every $\fp \in \Supp_A(M)$, $F_1(\fp),\ldots,F_m(\fp)$  is an $M(\fp)$-regular sequence.
\end{enumerate}
Moreover, condition (a) implies that $M/\langle F_1,...,F_i\rangle M$, $i=0,...,m$, is flat over $A$.
\item [(ii)] 
The following conditions {\em (a)} and  {\em (b)} are equivalent and imply  {\em (c)}.
\begin{enumerate}
\item$M^\wedge/ \langle F_1,..., F_m\rangle M^\wedge$ is flat over $A$ and $F_1,\ldots,F_m$  is an $M^\wedge$-regular sequence.
\item $M^\wedge$ is flat over $A$ and and for every $\fp \in \Supp_A(M^\wedge)$,  $F_1(\fp),\ldots,F_m(\fp)$  is an $M^\wedge(\fp)$-regular sequence. 
\item  $M^\wedge$ is flat over $A$ and and for every $\fp \in \Supp_A(M^\wedge)$,  $\hat F_1(\fp),\ldots,\hat F_m(\fp)$  is a 
$\hat M(\fp)$-regular sequence. 
 \end{enumerate}
 Moreover, statement {\em (a)} of {\em (i)} implies statement  {\em (a)} of {\em (ii)}. 
 \smallskip
 
 \item [(iii)] 
 Let  $I\subset R$ be an ideal with $R/I$ finite over $A$
 and $F_{1},\ldots,F_{m} \in I$. Let $M$ and $M/\langle F_{1},\ldots,F_{m}\rangle M$ be $A$-flat and let the induced map $\Supp_R(M/IM) \to \Spec A$ be open.\footnote{E.g. if $M/IM$ is flat over $A$. Since $A\to R/I$ is finite,
 $\Supp_R(M/IM) \to \Spec A$ being open is equivalent to $A\to R/\Ann_R(M/IM)$ has the going-down property.}
For $\fp \in \Supp_A(M/IM)$ let
  $\hat F_1(\fp),\ldots,\hat F_m(\fp)$  be an $\hat M(\fp)$-sequence. 
Then there exists an  open neighbourhood $U$ of \ $\fp$  in $\Spec A$ such that 
 $\hat F_1(\fq),\ldots,\hat F_m(\fq)$ is a $\hat M(\fq)$-sequence for all $\fq$ in $U$.
\end{enumerate}
\end{proposition}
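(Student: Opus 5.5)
Part (i) is the standard local criterion for regular sequences in families, and I would prove it by induction on $m$ using the Tor long exact sequence. For (a)$\Rightarrow$(b), set $M_i := M/\langle F_1,\dots,F_i\rangle M$. Since $F_i$ is a non-zero divisor on $M_{i-1}$, we have short exact sequences $0\to M_{i-1}\xrightarrow{F_i} M_{i-1}\to M_i\to 0$. Descending induction from the flatness of $M_m$ then gives flatness of all $M_i$: tensoring with any $A$-module $N$ and using $\mathrm{Tor}^A_1(M_i,N)=0$ yields an exact sequence $\mathrm{Tor}_1(M_{i-1},N)\xrightarrow{F_i}\mathrm{Tor}_1(M_{i-1},N)\to 0$. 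Hence $\mathrm{Tor}_1(M_{i-1},N)=F_i\,\mathrm{Tor}_1(M_{i-1},N)$.

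To conclude vanishing by Nakayama, I would localize at primes $\fn$ of $R$ containing $F_i$. Here $\mathrm{Tor}^A_1(M_{i-1},N)_\fn=\mathrm{Tor}^{A_\fp}_1((M_{i-1})_\fn,N_\fp)$ is a finite $R_\fn$-module when $N$ is finitely generated. Outside $V(F_i)$ the module $M_i$ is zero, so the sequence already gives the vanishing there. Flatness needs only finitely generated $N$ (ideals of $A$).

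With the $M_i$ flat, tensoring the short exact sequence with $k(\fp)$ stays exact. This shows $F_i(\fp)$ is a non-zero divisor on $M_{i-1}(\fp)=M(\fp)/\langle F_1(\fp),\dots\rangle$. For $\fp\in\Supp_A(M)$ the condition $M_m(\fp)\neq 0$ needs an argument; I would handle it by noting that the locus where $M_m\otimes k(\fp)=0$ is controlled via Nakayama on the finite $R_\fn$-modules. The converse (b)$\Rightarrow$(a) is the local flatness criterion \cite[Theorem 22.5 / Corollary to 22.5]{Mat86}, applied at each $\fn\in\Spec R$ over $\fp$ to the local map $A_\fp\to R_\fn$. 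That criterion gives that $F_i$ is $M_{i-1}$-regular and $M_i$ is flat, inductively. The "moreover" clause is exactly the descending induction above.

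For (ii), apply (i) to $M^\wedge$ in place of $M$. This is legitimate because $R^\wedge$ is Noetherian, $M^\wedge=M\otimes_RR^\wedge$ is finite over $R^\wedge$ (Lemma \ref{lem.compl}.5, 6), and $A\to R^\wedge$ is a morphism of Noetherian rings; this gives (a)$\Leftrightarrow$(b). For (b)$\Rightarrow$(c), note that $\hat M(\fp)$ is the $I$-adic completion of $M^\wedge(\fp)$ (Proposition \ref{prop.pctp}.1), and completion of finite modules over the Noetherian ring $R^\wedge(\fp)$ is flat and exact (Lemma \ref{lem.compl}.5, 7). So multiplication by $\hat F_i(\fp)$ stays injective on the completed quotients. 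Non-vanishing of the last quotient after completion I would deduce from Lemma \ref{lem.ann}.2, possibly using $F_i\in I$ or reducing to $\fp$ with $\hat M(\fp)\ne0$; this point needs care. Finally, (i)(a)$\Rightarrow$(ii)(a) follows from flatness of $R\to R^\wedge$: regularity of the sequence is preserved by flat base change, $M^\wedge/\langle F\rangle M^\wedge=(M/\langle F\rangle M)^\wedge$, and flatness over $A$ is preserved because $A$-flat $\otimes_R$ $R$-flat is $A$-flat.

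Part (iii) is the main obstacle. My plan is to pass to the completed stalks $\hat M_\fq$ and use the open map hypothesis to reduce to a statement at the points $\fn_i$ over $\fp$, namely that regularity of the sequence on the fiber propagates to nearby fibers. The first step is to note that for $F_j\in I$, the $\hat M(\fp)$-regularity together with the flatness of $M$ gives, via the local criterion at each maximal ideal of $\hat R_\fp$ (which lies over $\fp$ since $\hat R_\fp$ is semi-local with radical $\sqrt{\fp+I}$), that $F_1,\dots,F_m$ is $\hat M_\fp$-regular and $\hat M_\fp/\langle F\rangle$ is flat over $A_\fp$. The next step, which I expect to be hardest, is spreading this from $\fp$ to a neighbourhood. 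Here I would use that the non-flat locus of $M/\langle F_1..F_i\rangle M$ and the support of the kernels of $F_i$ are closed in $\Spec R$. I would then intersect with $V(I)$, which is finite over $A$, so that their images are closed. The openness hypothesis ensures that points $\fq$ near $\fp$ with $\hat M(\fq)\ne0$ actually arise from points of $\Supp_R(M/IM)$ generalizing those over $\fp$. The complement $U$ of these images then works, by applying the (b)$\Rightarrow$(c) argument at each such $\fq$.
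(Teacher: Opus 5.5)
\textbf{The gap is in (i).} The step that fails is the sentence ``Outside $V(F_i)$ the module $M_i$ is zero, so the sequence already gives the vanishing there.''

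At $\fn\notin V(F_i)$ the sequence only says that $F_i$ acts bijectively on $(M_{i-1})_\fn$. It says nothing about $T_\fn$ for $T=\mathrm{Tor}^A_1(M_{i-1},N)$, and Nakayama applied to $T=F_iT$ gives $T_\fn=0$ only for $\fn\in V(F_i)$.

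This cannot be repaired, because the claims are false as stated. Take $A=k[t]$, $R=k[t,x]$, $F_1=1+tx$ and $M=R/tR\oplus R$. Then $F_1$ is $M$-regular and $M/F_1M\cong k[t,t^{-1}]$ is $A$-flat, yet $M$ is not flat: $\mathrm{Tor}^A_1(M,A/tA)\cong k[x]$ lives on $V(t)\subset D(F_1)$. So (i)(a)$\Rightarrow$(b) and the ``moreover'' clause both fail.

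Your planned Nakayama argument for $M_m(\fp)\neq0$ cannot work either. For $M=R$ and the same $F_1$, at $\fp=\langle t\rangle\in\Supp_A(M)$ one has $F_1(\fp)=1$, so $M(\fp)/F_1(\fp)M(\fp)=0$.

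Your (ii) (apply (i) to $M^\wedge$, flat base change along $R\to R^\wedge$) is the paper's argument and inherits this problem. The non-vanishing you flagged in (b)$\Rightarrow$(c) is also genuinely false without $F_i\in I$. Take $A=k[t]_{\langle t\rangle}$, $R=A[[x]]$, $I=\x$, $M=R$, $F_1=x-t$. Conditions (ii)(a),(b) hold since $R/(x-t)R\cong k[[t]]$, but $\hat F_1(\langle 0\rangle)=x-t$ is a unit in $k(t)[[x]]$. If $F_i\in I$, Lemma \ref{lem.ann}.2 shows $\hat M(\fp)/\langle\hat F_1(\fp),\dots,\hat F_m(\fp)\rangle\hat M(\fp)\neq0$ iff $\hat M(\fp)\neq0$.

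The paper's proof of (i) has the same hole: it applies \cite[Theorem 22.5]{Mat86} to $A_\fp\to R_\fn$ at every prime $\fn$ of $R$, which tacitly requires $F_i\in\fn$. Both arguments are sound once all conditions are localized at primes $\fn\supseteq\langle F_1,\dots,F_m\rangle$. That is precisely the situation of (iii), where $F_i\in I$ and only points of $V(I)$ matter.

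\textbf{Part (iii).} Your outline is in substance the paper's: the local criterion at the finitely many points $\fn\in\Supp_R(M/IM)$ over $\fp$, closedness of $\Supp_R(K_{i-1})$ for $K_{i-1}=\ker(F_i\colon M_{i-1}\to M_{i-1})$, and flatness of completion. Three things need fixing.

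(1) The spreading step needs $(K_{i-1})_\fn=0$ at those $\fn$, not regularity on $\hat M_\fp$. You can deduce it from the latter, since $R_\fn\to(\hat R_\fp)_{\hat\fn}$ is local and flat, hence faithfully flat. (The paper instead descends injectivity from $\hat M(\fp)_{\fm}$ to $M(\fp)_\fn$ and applies \cite[Theorem 22.5]{Mat86} to $A_\fp\to R_\fn$.)

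(2) Drop the closed non-flat locus. Openness of the flat locus is a finite-presentation result (Remark \ref{rm.sopen}), and you do not need it. At any $\fn\in V(I)$ where all $(K_{i-1})_\fn$ vanish, use flatness of $M_m$ and the direction ``$x$ is $M$-regular and $M/xM$ is flat $\Rightarrow$ $M$ is flat and $x$ is regular on the closed fibre'' of \cite[Theorem 22.5]{Mat86}. Applied for $i=m,\dots,1$ (legitimate since $F_i\in I\subset\fn$), this gives flatness of $(M_i)_\fn$ and injectivity of $F_i(\fq)$ on $M_{i-1}(\fq)_\fn$, where $\fq=\fn\cap A$. Since $F_i(\fq)$ need not be injective on all of $M_{i-1}(\fq)$, you cannot simply rerun (b)$\Rightarrow$(c). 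Instead use $\hat M_{i-1}(\fq)\cong\bigoplus_\fn(M_{i-1}(\fq)_\fn)^\wedge$ over the points of $\Supp_R(M/IM)$ above $\fq$ (Proposition \ref{prop.sem}), together with flatness of completion.

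(3) Openness is not what produces points above $\fq$; Lemma \ref{lem.open} does that. Write $\phi\colon\Supp_R(M/IM)\to\Spec A$. Openness is what gives non-vanishing: $\hat M_m(\fq)\neq0$ iff $\fq\in\phi(\Supp_R(M/IM))$, and this set is open. Let $B$ be the closed set of points of $\Supp_R(M/IM)$ at which some $K_{i-1}$ does not vanish. Then $\phi(B)$ is closed because $V(I)\to\Spec A$ is finite, $\fp\notin\phi(B)$, and $U=\phi(\Supp_R(M/IM))\setminus\phi(B)$ works.

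This closed-image step, which you did sketch, is the right way to control all points above $\fq$ at once. The paper's choice $U=\phi(V_{i-1})$ only guarantees one good point above each $\fq$.
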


\begin{proof}
(i) Set $M_0=M,  M_i=M/\langle F_{1},\ldots,F_{i}\rangle M$ and  consider for 
$i=1,\ldots, m$ the exact sequence 
\begin{equation}\label{eq.flat}
	0 \to K_{i-1} \to M_{i-1}  \xrightarrow[\text {}]{F_i} M_{i-1} \to M_i \to 0,  
\end{equation}
with $K_{i-1}$ the kernel of $F_i$. Then $F_1,\ldots, F_m $ is an $M$-regular sequence iff $K_{i-1}=0$ for $i=1,...,m$. 
The sequence  (\ref{eq.flat}) stays exact if we localize 
it at a prime ideal $\fn$ of $R$. Now $K_{i-1}=0$ iff
$(K_{i-1})_\fn=0$ for all prime (equivalently maximal) ideals $\fn$ in $R$ and, by \cite[Theorem 7.1]{Mat86},   $M$ is $A$-flat iff $M_\fn$ is $A_{A\cap \fn}$-flat for all prime (equivalently maximal) ideals $\fn$ in $R$.
Hence (a) equivalent to:

($\alpha$) $M_\fn/ \langle F_1,..., F_m\rangle M_\fn$  is flat over $A_{A\cap \fn}$ and $F_1,\ldots,F_m$  is an $M_\fn$-regular sequence for all prime (equivalently maximal) ideals $\fn$ of $R$.
 
\noindent 
If we fix a prime ideal $\fn$ of $R$ and set $\fp = A\cap \fn$ then, by successive application of  \cite[Theorem 22.5]{Mat86}
applied to $A_\fp \to R_\fn$  and  starting with $F_m$, 
statement ($\alpha$)
is equivalent to:
 
($\beta$) $(M_i)_\fn$ is flat over $A_\fp$ for $i=0,...,m$, 
and  $F_1(\fp),\ldots,F_m(\fp)$  is an $M(\fp)$-regular sequence. 
 
 \noindent If $\fn$ runs through all maximal ideals of 
$\Supp_R(M/IM)$, then $\fp=A\cap \fn$ runs through all prime ideals of 
$\Supp_A(M)$, since $\Supp_R(M/IM)\to \Supp_A(M/I)$ is surjective (see Remark \ref{rm.sem0}). This proves the equivalence of (a) and (b).\\

(ii)  Let us first show that (i)(a) implies (ii)(a). We have
$L^{\wedge} = L\otimes_R R^{\wedge}$ for any finite $R$-module $L$ by Corollary \ref{cor.cpl} (iv). Since $R^{\wedge}$ is flat over $R$, the flatness of $M^\wedge/ \langle F_1,..., F_m\rangle M^\wedge$  follows hence from the flatness of
$M/ \langle F_1,..., F_m\rangle M$.
The $I$-adic completion of sequence (\ref{eq.flat}),
\begin{equation}\label{eq.flat1}
	0 \to K_{i-1}^\wedge \to M_{i-1}^\wedge  \xrightarrow[\text {}]{F_i}  M_{i-1}^\wedge \to  M_i^\wedge \to 0,  
\end{equation}
is an exact sequence of $R^\wedge$-modules and since
 $K^\wedge_{i-1} =0$ if $K_{i-1} =0$, it follows that
 $F_1,\ldots, F_m $ is an $M^\wedge$-regular sequence.
The equivalence of (a) and (b) in (ii) follows by applying (i) to $M^\wedge$.

To show (b) $\Rightarrow$ (c), we use Proposition \ref{prop.pctp}.1. and get $\hat M(\fp) = M(\fp)^\wedge =
(M^\wedge(\fp))^\wedge = (M^\wedge(\fp))\otimes_{R(\fp)} \hat R(\fp)$.
Since $\hat R(\fp)$ is flat over $R(\fp)$,
it follows that $\hat F_1(\fp),\ldots,\hat F_m(\fp)$  is $\hat M(\fp)$-regular. \\

(iii) We have to show, that for $i=1,...,m$  the kernel of the (by $F_i$ induced) map
$$\hat F_i(\fq):\hat M_{i-1}(\fq) \to \hat M_{i-1}(\fq)$$
vanishes and that $\hat M_m(\fq)\ne 0$
for $\fq$ in some  neighbourhood $U$ of  the given $\fp\in \Supp_A(M/I)$. \\

 Lemma \ref{lem.open}.2. and Remark \ref{rm.sem0}
  say that for any finite $R$-module $M$ and $\fp \in \Spec A$,
$\hat M(\fp) = M(\fp)^\wedge \ne 0$ iff 
$\fp \in \Supp_A (M/IM)$ and that the natural map ($\fn \mapsto \fn \cap A$), 
 $$\phi: \Supp_R (M/IM) \to \Supp_A (M/IM)
 = \{\fp \in \Spec A \mid \hat M(\fp) \ne 0\},$$ 
is finite and surjective. Moreover, if $\fn \in \Spec R$, then
\begin{equation}\label{eq.5}
\hat M(\fp)_\fn \ne 0 \iff \fn \in \Supp_R(M)\cap V(I),
\ \fp = \fn \cap A.
\end{equation}

Since $F_1,...,F_m \in I$ it follows that
$\Supp_R (M/IM)
=  \Supp_R (M) \cap V(I)= \Supp_R (M)\cap V(I+\langle F_1,...,F_i\rangle)= \Supp_R(M_i/IM_i)$ 
for $i=1,...,m$.
In particular $\hat M(\fp) \ne 0 \iff \hat M_i(\fp) \ne 0$
for one (equivalently for all) $i=1,...,m$. \\

By Proposition \ref{prop.sem}.4. (and Notation \ref{rm.notation}), 
$\hat M_{i-1}(\fp) =  \oplus_{j=1}^{r_\fp} \hat M_{i-1}(\fp)_{\fm_{\fp,j}}$
and, accordingly,  the map
 $\hat F_i(\fp): \hat M_{i-1}(\fp) \to \hat M_{i-1}(\fp) $ splits as a direct sum
$$\hat F_i(\fp):\oplus_{j=1}^{r_\fp} \hat M_{i-1}(\fp)_{\fm_{\fp,j}}\xrightarrow[\text {}]{\oplus_j \hat F_{i,j}(\fp)}  \oplus_{j=1}^{r_\fp} \hat M_{i-1}(\fp)_{\fm_{\fp,j}}.$$
By assumption the $\hat F_i(\fp)$ and hence the maps $\hat F_{i,j}(\fp) : \hat M_{i-1}(\fp)_{\fm_{\fp,j}} \to \hat M_{i-1}(\fp)_{\fm_{\fp,j}}$ are injective and 
$\hat M_{m}(\fp)_{\fm_{\fp,j}}\ne 0$ for some $j=j_0$. 
Furthermore 
$$\hat M_{i-1}(\fp)_{\fm_{\fp,j}} \cong
(M_{i-1}(\fp)_{\fn_{\fp,j}})^{\wedge,{\fn_{\fp,j}}}
= M_{i-1}(\fp)_{\fn_{\fp,j}}\otimes_{R_{\fn_{\fp,j}}} (R_{\fn_{\fp,j}})^{\wedge,{\fn_{\fp,j}}}.$$
As $(R_{\fn_{\fp,j}})^{\wedge,{\fn_{\fp,j}}}$ 
is faithfully flat over the local ring $R_{\fn_{\fp,j}}$, it follows that the map 
$$F_{i,j}(\fp): M_{i-1}(\fp)_{\fn_{\fp,j}} \to M_{i-1}(\fp)_{\fn_{\fp,j}}$$
 is injective for all $i=1,...,m, \ j=1,...,r_\fp$
 and $M_{m}(\fp)_{\fm_{\fp,j_0}}\ne 0$. 
 
 We  apply now repeatedly \cite[Theorem 22.5, (2) $\Rightarrow$ (1)]{Mat86},  to the morphism of local rings
$(A_\fp,k(\fp)) \to (R_{\fn_{\fp,j}},k({\fn_{\fp,j}})$ 
and to the map
$$F_{i,\fp,j}: (M_{i-1})_{\fn_{\fp,j}} \to (M_{i-1})_{\fn_{\fp,j}},$$
which satisfies
$$F_{i,j}(\fp) = F_{i,\fp,j}\otimes_Ak(\fp): (M_{i-1})_{\fn_{\fp,j}}\otimes_Ak(\fp)=M_{i-1}(\fp)_{\fn_{\fp,j}}\to M_{i-1}(\fp)_{\fn_{\fp,j}}.$$
 Starting with $i=1$ ($M$ is  $A$-flat  by assumption, hence
 $M_{\fn_{\fp,j}}$ is $A_\fp$-flat)  we get
that the kernel 
$(K_{i-1})_{\fn_{\fp,j}}$ of $F_{i,\fp,j}$
vanishes 
and that 
$(M_{i-1})_{\fn_{\fp,j}}/F_{i,j}(M_{i-1})_{\fn_{\fp,j}}=(M_{i})_{\fn_{\fp,j}}$ is flat over $A_\fp$ for all $i,j$.\\

The support of the $R$-module $K_{i-1}$ is closed and hence
 $(K_{i-1})_{\fn}=0$  for $\fn$ in an open neighbourhood   $U_{i-1,j}$  of $\fn_{\fp,j}$ in $\Spec R$.
 That is,
$F_{i,j}: (M_{i-1})_{\fn} \to (M_{i-1})_{\fn}$ is injective for
all $\fn \in U_{i-1,j}$, in particular for all $\fn \in V_{i-1,j}:=U_{i-1,j} \cap \Supp_R (M/IM)$. 
Since $M_m$ is $A$-flat
by assumption,
$(M_{m})_\fn$ is $A_{\fn \cap A}$-flat for all $\fn \in \Supp_R(M_m) \supset \Supp_R (M/IM)$. 
It follows  from  \cite[Corollary to Theorem 22.5, (1) $\Rightarrow$ (2)]{Mat86} that 
$$F_{i,j}(\fq): (M_{i-1})_{\fn}\otimes_A k(\fq)= M_{i-1}(\fq)_{\fn}\to M_{i-1}(\fq)_{\fn},  
\ \fq = \fn\cap A,$$
 is injective for all $\fn \in V_{i-1,j}$.
 
 Set $V_{i-1}:=\bigcup_{j=1}^{r_\fp}V_{i-1,j}$. 
 Then $V_{i-1}$ is open in $\Supp_R(M/IM)$
 and
 disjoint from $\Supp_R(K_{i-1})$. 
Moreover, $\phi(V_{i-1})$ is disjoint from $\phi(\Supp_R(K_{i-1}))$. To see this, assume that there exists a
 $\fq \in  \phi(\Supp_R(K_{i-1}))$ and an   $\fn \in V_{i-1}$ such that $\phi(\fn) = \fq$. By (\ref{eq.5})
 $\hat K_{i-1}(\fq)_\fn = ((K_{i-1}(\fq))_\fn)^\wedge = 
 ((K_{i-1})_\fn(\fq))^\wedge  \ne 0$ iff 
 $\fn \in \Supp_R (K_{i-1}) \cap V(I)$. This is a contradiction since $(K_{i-1})_\fn =0$ if $\fn \in V_{i-1}$.\\
  
It follows that
  $F_i(\fq): M_{i-1}(\fq) \to  M_{i-1}(\fq)$  and,  since $\hat R(\fq)$ is  flat over  $R(\fq)$,
 $\hat F_i(\fq): \hat M_{i-1}(\fq)=M_{i-1}(\fq)\otimes_{R(\fq)}\hat R(\fq)
   \to \hat M_{i-1}(\fq)$
 is injective for $\fq \in U:=\phi (V_{i-1})$ and $i=1,...,m$. Since $\phi$ is open by assumption, $U$ is open in $\Spec A$. Furthermore, if $\fn \in V_{i-1}$, then
 $\fn \in \Supp_R(M/IM) = \Supp_R (M_m)\cap V(I)$.
 Hence $\hat M_m(\fq) \ne 0$  by Remark \ref{rm.sem0}
 for all $\fq \in U$. This proves (iii).
\end{proof}
\bigskip

\noindent {\bf Families of hypersurface singularities}
\medskip

We investigate in this part the behavior of the Milnor number and the Tjurina number in families of hypersurface singularities. We start with the definition for a single hypersurface singularity.

Let $K$ be a field and $K[[x]]$ the power series ring over $K$ with   maximal ideal $\x = \langle x_1,...,x_n\rangle$. For $f \in K[[x]]$ we denote by
$j(f) := \langle {\partial f}/{\partial x_1}, \ldots, {\partial f}/{\partial x_n}\rangle$ the {\em Jacobian ideal} of $f$,
generated in $K[[x]]$ by the partial derivatives of $f$.

\begin{definition}\label{def.mutau}
For $f\in \x K[[x]]$ we define
\begin{enumerate}
\item[(i)] the {\em Milnor number} of $f$ (w.r.t. $K$), 
$$ \mu_K (f):= \dim_K K[[x]]/j(f)$$

\item[(ii)] and  the {\em Tjurina number} of $f$ (w.r.t. $K$), 
$$ \tau_K (f):= \dim_K  K[[x]]/\langle f,j(f)\rangle.$$

\item[(iii)] We say that $f$ resp. $K[[x]]/\langle f\rangle$   has an {\em isolated singularity} (w.r.t. $K$) if $\mu_K(f) < \infty$
resp.  $\tau_K(f) < \infty$. 
\end{enumerate}
\end{definition}

\begin{remark}\label{rem.mutau} {\em
To shorten notations we set $R:=K[[x]]$ and $A:=K[[t]]$.
\begin{enumerate}
\item [(1)]   We have always  $\tau_K(f) \leq \mu_K(f)$  and
  $\tau_K(f) = \mu_K(f)  \iff f \in j(f)$. Moroever,
$\mu_K(f) < \infty$
resp.  $\tau_K(f) < \infty$ iff $\x^k \subset j(f)$ resp. $\x^k \subset \langle f,j(f)\rangle$ for some $k\ge 0$. 
Let us call 
$$V(j(f))=  \{\fn \in \Spec K[[x]] \mid {\partial f}/{\partial x_i} \in \fn, \ i=1,...,n \}$$
the {\em singular locus of the power series} $f$.
By Lemma \ref{lem.slocus} the {\em singular locus of the local algebra}  $K[[x]]/\langle f \rangle)$ is 
$$\Sing (K[[x]]/\langle f \rangle) = V(\langle f,j(f)\rangle).$$ 
Hence $\mu_K(f) < \infty$ 
(resp. $\tau_K(f) < \infty$) iff  $\Sing (f)$ (resp. $\Sing (K[[x]]/\langle f \rangle)$ is either $\{\x\}$ or $\emptyset$. 

\item [(2)] Consider the map 
$\phi: A=K[[t]] \to R=K[[x]]$, $\phi(t) = f$, 
and the induced map
$\Spec \phi: \Spec R \to \Spec A$.
For $\fn \in \Spec R$ set  $\fp =\fp_\fn =(\Spec \phi) (\fn)$, i.e. $\fp =A \cap \fn= \phi^{-1}(\fn)$.
Since $\phi$ is flat, its non-smooth locus of $\phi$ satisfies (cf. Remark \ref{rm.sopen})
\begin{center}
$\NSm(\phi) = \{\fn \in \Spec R \mid R(\fp)_\fn \text{ is not geometrically regular over } k(\fp) \}.$
\end{center}

\item [(3)] Let $\chara(K) = 0$. 
Then the  singular locus of $f$
 coincides with the singular locus of $K[[x]]/\langle f \rangle$ and with the non-smooth locus of $\phi$ i.e. 
$$V(j(f)) = V(\langle f,j(f)\rangle) = \NSm(\phi).$$

\noindent In fact,  in characteristic 0 we  have $V(j(f)) \subset V(f)$.
This follows from \cite[Theorem 7.1.5]{SH06} which implies that $f$ is contained in the integral closure and hence in the radical of $j(f)$. Therefore $f^k \in j(f)$ for some $k$ and thus 
$V(j(f)) \subset V(f)$.
The Brian\c{c}on-Skoda theorem \cite[Theorem 13.3.3]{SH06} implies that $j(f)^n$ is contained in the integral closure of $j(f)$, hence even $f^n \in j(f)$. This proves the first equality, the second follows from Lemma \ref{lem.smphi}.\\
It is proved in \cite[Theorem 2]{BGM12} that
 $\mu(f) < \infty \Leftrightarrow \tau(f) < \infty$,  but the example $f=x^3+y^5$ shows that this is not true in characteristic 3 and 5.
 
\item [(4)] Let $\chara(K) = p >0$. Then 
$$V(\langle f,j(f)\rangle) \subset  \NSm(\phi) \text{ and }V(\langle f,j(f)\rangle)  \subset  V(j(f)),$$
 and in general the inclusions are strict. The first inclusion follows from Lemma \ref{lem.smphi} with equality if $K$ is perfect.
The second inclusion is clear. 
For $f=x^3+y^5$, $p=3,5$, we have $V(\langle f,j(f)\rangle) = V(x,y)$, which is strictly contained in $V(j(f)) = V(y)$ (even if $K$ is perfect).
\end{enumerate}
}
\end{remark}

\medskip

Let us consider $\phi: A=K[[t]] \to R=K[[x]]$, $ t \mapsto f$, in more detail. We are interested in the relation between the 
non-smooth locus of $\phi$ and the singular locus of $f$ and 
of $R/\langle f \rangle$ in arbitrary characteristic.

 $\Spec A$ has only two points, the closed point  $\fp = \langle t \rangle$ and the generic point $\fp = \langle 0 \rangle$ with residue fields $k(\langle t \rangle) = K$ and
$k(\langle 0 \rangle) = \Quot (A) = K((t)).$
We call $R(\langle t \rangle) = K[[x]]/\langle f \rangle$ the {\em special fiber} 
and 
$R(\langle 0 \rangle) = K[[x]] \otimes_{K[[t]]} K((t))$ the
 {\em generic fiber} of $\phi$.
We note that 
$(\Spec \phi) ^{-1}(\langle t\rangle) =V(f) \text{ and }
(\Spec \phi) ^{-1}(\langle 0\rangle) = D(f) = \Spec R_f.$
Let $\fn \in \Spec R$. Then 
$$(\Spec \phi)(\fn) = \langle 0\rangle \iff
f\notin \fn \text{ and } (\Spec \phi)(\fn) = \langle t\rangle \iff   f \in \fn.$$
If $f\notin \fn$ then  $R(\langle 0\rangle)_\fn = R_\fn\otimes_{A} \Quot(A)= R_\fn\otimes_{A} K((t))=R_\fn=K[[x]]_\fn $, 
since $t$ acts as $f$, which is a unit in $R_\fn$. On the other hand, if $f \in \fn$ then
$R(\langle t\rangle)_\fn = K[[x]]_\fn/\langle f\rangle$.\\

The following lemma shows that the generic fiber $R(\langle 0\rangle)$  is always contained in the smooth locus $\Sm(\phi)$. The special fiber $R(\langle t\rangle)$ is however more subtle (it may be regular, but not geometrically regular over $k(\langle t \rangle)$.

\begin{lemma} \label{lem.smphi}
Let $K$ be a field, 
$f\in K[[x]]$ and
$\phi: A=K[[t]] \to R=K[[x]]$, $\phi(t) = f$.
Then $ V(\langle f,j(f)\rangle)$ is contained in  the 
 non-smooth locus $\NSm(\phi)$ of $\phi$, 
 with equality if $K$ is perfect.\\
More precisely for  $\fn \in \Spec R$ the following holds. 

\begin{enumerate}
\item  If  $\Spec \phi)(\fn) = \langle 0\rangle$
 then $R(\langle 0\rangle)_\fn$ is geometrically regular over $K$ and hence also over the extension field $k(\langle 0\rangle)$.That is, the generic fiber $R(\langle 0\rangle)$ 
is geometrically regular over 
$k(\langle 0\rangle)$ and hence $\Spec R(\langle 0\rangle) \subset\Sm(\phi)$.

\item 
Let $(\Spec \phi)(\fn) = \langle t \rangle$ and  $R(\langle t \rangle) = R/\langle f\rangle$ 
 the special fiber.
If $\fn\in  V(\langle f,j(f)\rangle)$ 
then  $R(\langle t \rangle)_\fn$ is not geometrically regular over $k(\langle t \rangle) = K$ and hence $\fn \in \NSm(\phi)$. \\
If $\fn\notin  V(\langle f,j(f)\rangle)$ then $R_\fn/\langle f\rangle$ is geometrically regular over $k(\fn)$
(but not necessarily over $k(\langle t \rangle) = K$  if $\fn \ne \x$;  if $\fn = \x$ then $k(\fn) =K$). If $K$ is perfect, then $R(\langle t\rangle)_\fn$ is  geometrically regular over $K$ and hence $\fn \in \Sm(\phi)$.
\end {enumerate}
\end{lemma}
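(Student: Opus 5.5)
Since $R=K[[x]]$ is a regular local ring, $R_\fn$ is regular for every $\fn\in\Spec R$ (Remark \ref{rm.sing}). I will treat the two kinds of fibres separately and use Theorem \ref{thm.Groth} together with Remark \ref{rm.sopen}(1), which applies because $\phi$ is flat (for $f\neq 0$, $K[[x]]$ is torsion free over the DVR $K[[t]]$).

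\emph{Generic fibre.} If $(\Spec\phi)(\fn)=\langle 0\rangle$, then $f\notin\fn$ and $R(\langle 0\rangle)_\fn=R_\fn$, as computed before the lemma. Since $K[[x]]$ is $\x$-smooth over $K$ (Lemma \ref{lem.smooth}), Proposition \ref{prop.greg} (1.$\Leftrightarrow$6., applied to the local ring $K[[x]]$ and then localized via item 4.) shows that $K[[x]]$, and hence $R_\fn$, is geometrically regular over $K$.

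To pass to $k(\langle 0\rangle)=K((t))$, I consider a finite extension $L'$ of $K((t))$ and must show that $R_\fn\otimes_{K((t))}L'$ is regular. This step is the main obstacle, since $K((t))/K$ is not algebraic and Lemma \ref{lem.reg.fieldextension} does not apply. My first attempt is to reduce to purely inseparable $L'$ (Proposition \ref{prop.greg}.2). Then $L'$ is contained in $K((t^{1/q}))\cdot K^{1/q}$, so $R_\fn\otimes L'$ is a localization of $K[[x]]\otimes_{K[[t]]}K'[[s]]$ with $s^q=t$ and $K'$ finite purely inseparable over $K$. This ring is $K'[[x]][s]/(s^q-f)$, inverted at $f$. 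On $D(f)$, the ring $K'[[x]][s]/(s^q-f)$ is finite free over $K'[[x]]_f$, and it is étale when $f$ is a unit and $q$ is prime to $p$. If $q$ is a power of $p$, I instead use Grothendieck/Theorem \ref{thm.Groth} in the form of the Jacobian criterion: the derivative of $s^q-f$ with respect to $x_i$ is $-\partial f/\partial x_i$. I expect to need the identity $f\cdot(\text{unit})\in\ldots$, or more simply the observation that over $D(f)$ the partial derivatives need not be units. If that route fails, the fallback is the standard fact (EGA IV, 7.8) that a formally smooth $K$-algebra $R_\fn$, localized at a multiplicative set containing a subfield $K((t))$, is geometrically regular over that subfield because the fibres of $K((t))\to R_\fn$ are localizations of a regular excellent ring. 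In any case, Remark \ref{rm.sopen}(1) then gives $\fn\in\Sm(\phi)$.

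\emph{Special fibre.} If $f\in\fn$ and $\fn\in V(\langle f,j(f)\rangle)$, I extend scalars to $\bar K$ and localize at a prime over $\fn$. The element $f$ then lies in the square of the maximal ideal modulo the ideal generated by $j(f)$ (a Taylor-type argument in the local ring), so the embedding dimension of $R_\fn/\langle f\rangle\otimes\bar K$ jumps and the ring is not regular. Hence $R(\langle t\rangle)_\fn$ is not geometrically regular, and by Remark \ref{rm.sopen}(1), $\fn\in\NSm(\phi)$.

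If $\fn\notin V(\langle f,j(f)\rangle)$, some $\partial f/\partial x_i$ is a unit in $R_\fn$. The Jacobian criterion (Lemma \ref{lem.jac}, applied to the completion $k(\fn)[[\ldots]]$ via Cohen, Theorem \ref{thm.cohen}) shows that the hypersurface $R_\fn/\langle f\rangle$ is regular, and geometrically regular over $k(\fn)$. When $K$ is perfect, every extension is separable, so regularity of $R_\fn/\langle f\rangle\otimes_K K'$ for finite $K'$ follows from étaleness of $K\to K'$ (Proposition \ref{prop.greg}.2 is vacuous here). Thus $\fn\in\Sm(\phi)$, which gives the equality $V(\langle f,j(f)\rangle)=\NSm(\phi)$ when $K$ is perfect.
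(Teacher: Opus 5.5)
\textbf{Gap in the generic-fibre step.} This step is a genuine gap, and in positive characteristic no argument can close it, because the assertion is false. Take $n=1$, $\chara K=p>0$, $f=x^p$.
\begin{itemize}
\item $\phi$ is flat, since $K[[x]]$ is free over $K[[t]]$ on $1,x,\dots,x^{p-1}$.
\item The prime $\langle 0\rangle$ of $R$ lies over $\langle 0\rangle$, and $R(\langle 0\rangle)_{\langle 0\rangle}=K[[x]][x^{-p}]=K((x))$.
\item This field is geometrically regular over $K$. But over $k(\langle 0\rangle)=K((t))$ it is the purely inseparable extension generated by $x$ with $x^p=t$.
\item Hence $K((x))\otimes_{K((t))}K((x))\cong K((x))[T]/\langle (T-x)^p\rangle$ is not reduced, and by Theorem~\ref{thm.Groth} $\phi$ is not smooth at $\langle 0\rangle$.
\end{itemize}
Since $j(x^p)=0$, this also gives $V(\langle f,j(f)\rangle)=\{\langle x\rangle\}\subsetneq\NSm(\phi)=\Spec R$, even for perfect $K$.

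Your own computation detects this: for $q=p$, the element $s-x$ is a nonzero nilpotent in $K[[x]][s]/\langle s^p-f\rangle$, also after inverting $f$. Your fallback is not a valid principle. Excellence concerns formal fibres, not regularity over an arbitrary subfield: the excellent field $K((x))$ is not geometrically regular over its subfield $K((x^p))$. The paper's proof does not help here, because it makes the same leap (``and hence also over the extension field $k(\langle 0\rangle)$''). What you can prove is item~1 for $\chara K=0$. There $R_\fn$ is regular and $K((t))$ has no nontrivial purely inseparable extensions, so Proposition~\ref{prop.greg}.2 applies at once.

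\textbf{The special fibre.} Your ``Taylor-type argument'' is not available at a non-closed $\fn$, whose residue field is not $K$. The implication $f,\partial f/\partial x_i\in\fn\Rightarrow f\in\fn^2R_\fn$ is a Zariski--Nagata type statement. For perfect $K$ it follows from Nagata's Jacobian criterion for $K[[x]]$, not from a Taylor expansion. For imperfect $K$, passing to $\bar K$ does not give it, since $K[[x]]\otimes_K\bar K\neq\bar K[[x]]$. In fact the inclusion $V(\langle f,j(f)\rangle)\subset\NSm(\phi)$ can fail:
\begin{itemize}
\item Let $K=\mathbb{F}_p(a_1,a_2,\dots)$ and $f=x^p-\sum_{i\ge1}a_iy^{pi}$, so $j(f)=0$.
\item $f$ stays prime in $K'[[x,y]]$ for every finite purely inseparable $K'/K$, because only finitely many $a_i^{1/p}$ lie in $K'$.
\item Hence $k(\langle f\rangle)\otimes_KK'$ is a field, and $\langle f\rangle\in\Sm(\phi)$.
\end{itemize}

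For the second half of item~2 you need neither Lemma~\ref{lem.jac} via Cohen nor perfectness. That lemma's Jacobian is taken in the Cohen coordinates, not in $x$; the paper's proof has the same conflation. Instead argue directly. For finite purely inseparable $K'/K$ one has $R_\fn\otimes_KK'=K'[[x]]_{\fn'}$, with $\fn'$ the unique prime over $\fn$. If $sf\in\fn'^2$ with $s\notin\fn'$, applying $\partial/\partial x_i$ gives $\partial f/\partial x_i\in\fn'\cap R=\fn$. So $\fn\notin V(\langle f,j(f)\rangle)$ already makes $R_\fn/\langle f\rangle$ geometrically regular over any $K$.
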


\begin{proof}
1. That  $R(\langle 0\rangle)_\fn=K[[x]]_\fn$ is
geometrically regular over $K$ follows from Lemma \ref{lem.smooth}, Lemma \ref{lem.jac} and Proposition  \ref{prop.greg} and hence $\Spec R(\langle 0\rangle) \subset
\Sm(\phi)$.

2. By 1. $\NSm(\phi) \subset V(f)$. Let $\fn \in \Spec R$ and $\fn \in V(f) \cap \Sm(\phi)$. We want to show that $\fn \notin V(j(f))$, i.e. $j(f)\not\subset \fn$. We note that $j(f) \subset \fn$ implies that $j(f)R_\fn \subset \fn R_\fn$
 and hence that $j(f)(R_\fn)^{\wedge, \fn}$ is contained in the maximal ideal of $(R_\fn)^{\wedge, \fn}$.

Since $\fn \in \Sm(\phi)$,  we have
$R(\langle t\rangle)_\fn = R_\fn/\langle f\rangle$
is geometrically regular over  $k(\langle t\rangle) =K$ (Remark \ref{rem.mutau} (2)) and hence over $k(\fn)$.
By Proposition \ref{prop.greg} this is equivalent to $(R_\fn)^{\wedge, \fn}/\langle f\rangle$ being geometrically regular over $k(\fn)$.
By Cohen (Theorem \ref{thm.cohen}) $(R_\fn)^{\wedge, \fn} \cong k(\fn)[[y_1,...,y_d]]$, 
$d=n-\dim K[[x]]/\fn$,  and 
$k(\fn)[[y_1,...,y_d]]/\langle f\rangle$ being regular implies
that $j(f) k(\fn)[[y_1,...,y_d]] = k(\fn)[[y_1,...,y_d]]$ by Lemma \ref{lem.jac}. By the above, this implies that $j(f) \not\subset \fn$ and hence $\fn \notin  V(\langle f,j(f)\rangle)$ i.e.
$V(\langle f,j(f)\rangle) \subset \NSm(\phi)$.

Conversely, let $\fn \in V(\langle f\rangle)$, $\fn \notin V(j(f))$. Then $j(f) \not\subset \fn$ and by the argument above $j(f) k(\fn)[[y_1,...,y_d]] = k(\fn)[[y_1,...,y_d]]$, implying that
$(R_\fn/\langle f\rangle)^{\wedge, \fn} \cong  k(\fn)[[y_1,...,y_d]] / \langle f\rangle $ is geometrically regular over $k(\fn)$ (by Lemma \ref{lem.jac}) and hence $R_\fn/\langle f\rangle$ is geometrically regular over $k(\fn)$. If $K$ is perfect, then $k(\fn)$ is separable algebraic over $K$ and  $R_\fn/\langle f\rangle$ is geometrically regular over $K$ (by Lemma \ref{lem.reg.fieldextension}), and hence $\fn \in \Sm(\phi)$.
By Example 
\ref{ex.nsep}, $R_\fn/\langle f\rangle$ need not be geometrically regular over $K$  and   
$\fn \notin \Sm(\phi)$ if $K$ is not perfect.
\end{proof}

\begin{example}\label{ex.nsep}
\em{
Let $K$ be a field of characteristic $p>0$, $K'=K^{\frac{1}{p}}$ and $\alpha \in K' \smallsetminus K$. Consider 
$A=K[[t]] \to R=K[[x,y]]$, $t \mapsto f$, with
 $f=x^p+\alpha y^p$. Since $f$ is irreducible, $\fn = \langle f \rangle  \subset K[[x,y]]$ is a prime ideal and since 
$\fn \cap A = \langle t \rangle$, we get $k(\langle t \rangle) = K$.
The localisation of the special fiber at $\fn$ is $R(\langle t \rangle)_\fn = (K[[x,y]]/ \langle f \rangle)_{\langle f \rangle} = \Quot(K[[y]][x]/\langle f \rangle)=K((y))[x] / \langle f \rangle=K((y)) (\sqrt[p]{\alpha}y)$, which is a field (hence regular) equal to $k(\fn)$.
We get that
$R(\langle t \rangle)_\fn \otimes_K K' = (K((y))\otimes_K K')[x] / \langle (x+ \sqrt[p]{\alpha}y)^p \rangle$ is not reduced
and hence not regular, showing that $R(\langle t \rangle)_\fn$ is not geometrically regular over $K$. 
Note that $k(\fn)=R(\langle t \rangle)_\fn$ is not separable over $K$ (\cite[Chapter 26]{Mat86})
and not algebraic over $K$. }
\end{example}
\medskip

We consider now families of singularities. For the rest of this section we fix the following notations and assumptions.
We assume  that $A$ is a Noetherian ring and set
$$R:=A[x], \ x=(x_1,...,x_n).$$ 
The natural map  $\phi:A\to R$ is smooth, as it is flat ($R$ is even a free $A$-module) and the non-empty fibers $R(\fp)=k(\fp)[x]$ are geometrically regular over $k(\fp)$ (since for every maximal ideal $\fn$ of $k(\fp)$, the completion $(k(\fp)[x]_\fn)^{\wedge,\fn}
\cong k(\fn)[[x]] $ is geometrically regular).\\
A {\em family of hypersurfaces} is given by an $F\in R$ so that the fibers $F(\fp)$ are polynomials in $k(\fp)[x]$ and we will show that the sum of the Milnor resp. Tjurina numbers of $F(\fp)$ is semicontinuous if $\fp$ varies in $\Spec A$.\\
We assume moreover, that an ideal $I\subset  R$ is given with {\em $R/I$ finite over $A$}. We will consider the Milnor and Tjurina number "along $V(I)$", meaning that we count  these numbers only at intersection points of the fibers $\Spec R(\fp)$ with $V(I)$. Two cases are particularly interesting: \\
(a) when $I = \langle F\rangle$ resp. $I = \langle F,j(F)\rangle$. Then we count the Milnor and Tjurina numbers at all points of the fibers.\\
(b) when $I$ defines a section of $\phi$, that is $I = \ker(\sigma)$ where $\sigma: R \to A$ is a section of $\phi$ satisfying $\sigma \circ \phi = id_A$ 
(for instance the trivial section $I=\x$.)

 \begin{remark}\label{rm.facts} {\em
 We recall some facts from the previous chapters in our situation. Let $\fp \in \Spec A$. 
 \begin{enumerate}
\item $R_\fp=A_\fp[x] $, 
 $ R(\fp) = k(\fp)[x] $ and  (by Proposition \ref{prop.pctp}.1.)
$$ \hat R(\fp) = (k(\fp)[x])^{\wedge,Ik(\fp)[x]}.$$
\item  $\hat R(\fp)/I \hat R(\fp)$  is finite dimensional over $k(\fp)$ and a semi-local ring  with  maximal ideals 
$\bar\fm_{\fp,1},...,\bar\fm_{\fp,r_fp}$ (this is the starting point in Proposition \ref{prop.sem}).\\
$\hat R(\fp) \ne 0 \iff  \fp \in \Supp_A(R/I) \iff I_\fp \subsetneqq R_\fp$ and the map $\Spec \phi: V(I) \to \Supp_A(R/I)$, $\fn \mapsto \fn\cap A$, is surjective and finite (Lemma \ref{lem.open}). Hence 
$$(\Spec \phi)^{-1}(\fp)\cap V(I) = \{\tilde \fn_{\fp,1},...,\tilde \fn_{\fp,r_\fp}\}$$
for finitely many prime ideals 
$ \tilde \fn_{\fp,1}, ...,\tilde \fn_{\fp,r_\fp}$  in $R$. 
By  Remark \ref{rm.sem0} we have
$\tilde \fn_{\fp,_i} =R\cap \bar\fm_{\fp,i}$ with $\bar\fm_{\fp,i}$ the maximal ideals of the semi-local ring 
$\hat R(\fp)/I \hat R(\fp)$
(Proposition \ref{prop.sem}.1.).  
\item
The ideals $\fn_{\fp,i} = \tilde \fn_{\fp,i} R_\fp$ resp. 
$\fm_{\fp,i} = \tilde \fn_{\fp,i} \hat R(\fp)$ are maximal ideals in $R_\fp$ resp. in $\hat R(\fp)$. $\hat R(\fp)$ is semi-local and a direct sum
\begin{equation}\label{eq.dirsum}
\hat R(\fp) \cong   \hat R(\fp)_{\fm_{\fp,1}}\oplus\ldots\oplus \hat R(\fp)_{\fm_{\fp,r_\fp}},
\quad \{\fm_{\fp,1},...,\fm_{\fp,r_\fp} \} = V(I \hat R(\fp)),
\end{equation}
with
$\hat R(\fp)_{\fm_{\fp,i}} \cong
(k(\fp)[x]_{\fm_{\fp,i}})^{\wedge,\fm_{\fp,i}}
\cong k(\fm_{\fp,i})[[x]]$ and $k(\fm_{\fp,i})$ finite over $k(\fp)$.
\item
 If $M$ is a finitely generated $R$-module, then  $\hat M(\fp)$
is finite over $k(\fp)$ and
$$
\begin{array}{lll}
\dim_{k(\fp)}  \hat M(\fp)&=
\sum_{i=1}^{r_\fp} \dim_{k(\fp)} \hat M(\fp)_{\fm_{\fp,i}}\\
\smallskip
&=\sum_{i=1}^{r_\fp} [k(\fm_{\fp,i}):k(\fp)]\dim_{k(\fm_{\fp,i})} \hat M(\fp)_{\fm_i}.\\
\end{array}
$$ 
\end{enumerate}
}
\end{remark}
\bigskip

\noindent Now let $F \in A[x]$ be given. We define
$$j(F):=\langle {\partial F}/{\partial x_1}, \ldots, {\partial F}/{\partial x_n}\rangle \subset A[x],$$ 
the relative Jacobian ideal of $F$.
\medskip

Denote by $ \hat F(\fp)$ the image of $F$   in $\hat R(\fp)$ 
and by $\hat F(\fm_{\fp,i})$
the images 
in $\hat R(\fp)_{\fm_{\fp,i}} \xrightarrow {\cong} k(\fm_{\fp,i})[[x]]$.  
Note that $(j(F)(\fp))^\wedge = j(\hat F(\fp)) = \langle \partial{\hat F(\fp)}/\partial {x_1},..., \partial{\hat F({\fp})}/\partial {x_n}\rangle$.
Then, in $\hat R(\fp) \cong \oplus_{i=1}^{r_\fp}  k(\fm_{p,i})[[x]]$,
$$
\begin{array}{lll}
\hat F(\fp) &=& \hat F(\fm_{\fp,1})\oplus ...\oplus \hat F(\fm_{\fp,r_\fp}),\\
j(\hat F(\fp))&=& j(\hat F(\fm_{\fp,1}))\oplus ...\oplus j(\hat F(\fm_{\fp,r_\fp})),
\end{array}
$$
and the Milnor  and the Tjurina number of $\hat F(\fm_{\fp,i})\in k(\fm_{\fp,i})[[x]]$ are defined. We have
$$
\begin{array}{lll}
\mu_{k(\fm_{\fp,i})}(\hat F(\fm_{\fp,i}))&=&\dim_{k(\fm_{\fp,i})} 
 k(\fm_{\fp,i})[[x]]/ j(\hat F(\fm_{\fp,i})),\\
 \medskip
 
 \tau_{k(\fm_{\fp,i})}(\hat F(\fm_{\fp,i}))&=&\dim_{k(\fm_{\fp,i})} 
 k(\fm_{\fp,i})[[x]]/ \langle \hat F(\fm_{\fp,i}), j(\hat F(\fm_{\fp,i}))\rangle.
\end{array}
$$

\begin{definition}\label{def.mutau2}
Let $F\in R=A[x]$, $I\subset R$ an ideal with $R/I$ finite over $A$.
For $\fp \in \Spec A$ let $\hat R(\fp) = R(\fp)^{\wedge, I}$ be the $IR(\fp)$-adic completion of the fiber $R(\fp)$.
Define
\begin{enumerate}
\item the {\em Milnor number of $\hat F(\fp)$ along $V(I)$}  by
$$ \mu_{k(\fp)}^I (\hat F(\fp)) :=  \dim_{k(\fp)}  \hat R(\fp)/j(\hat F(\fp)),$$ 
\item the {\em Tjurina number of $\hat F(\fp)$ along $V(I)$} by
$$ \tau_{k(\fp)}^I(\hat F(\fp)) :=  \dim_{k(\fp)}  \hat R(\fp)/\langle \hat F(\fp),j(\hat F(\fp))\rangle.$$ 

\item If $I = j(F)$ we call  $\mu_{k(\fp)} (\hat F(\fp)):=\mu^{j( F)}_{k(\fp)} (\hat F(\fp))$ the {\em total Milnor number} of $\hat F(\fp)$ and if  $I = \langle F,j(F)\rangle$, we call  $\tau_{k(\fp)} (\hat F(\fp)):=\tau^{\langle F,j(F)\rangle}_{k(\fp)} (\hat F(\fp))$ the {\em total Tjurina number} of $\hat F(\fp)$.
\end{enumerate}
\end{definition}
\medskip

We say "along $V(I)$" since in the formulas
\begin{equation}\label{eq.mutau}
\begin{array}{llll}
\mu^I_{k(\fp)} (\hat F(\fp)) &=&
 \sum_{i=1}^r [k(\fm_{\fp,i}):k(\fp)] \mu_{k(\fm_{\fp,i})}(\hat F(\fm_{\fp,i})),\\
 \tau^I_{k(\fp)} (\hat F(\fp)) &=&
 \sum_{i=1}^r [k(\fm_{\fp,i}):k(\fp)] \tau_{k(\fm_{\fp,i})}(\hat F(\fm_{\fp,i})),
 \end{array}
\end{equation}
the sum is taken only over the points $\fm_{\fp,i} \in V(I\hat R(\fp))$, corresponding to points in  $V(I)$ lying over $\fp$. 

If $I=j(F)$, then $V(I\hat R(\fp)) = \Sing (\hat F(\fp))$ as defined in Remark \ref{rem.mutau} and the sum is taken over all points $\fn \in  \Sing (\hat F(\fp))$; if $\fn \notin \Sing (F(\fp))$ then $\mu_{k(\fp)}(F(\fp))=0$.
Similarly, if $I=\langle F,j(F)\rangle$, then the sum is taken over {\em all }
$\fn \in \Sing (\hat R(\fp)/\hat F(\fp))$ by Lemma \ref{lem.slocus} and 
 $\tau_{k(\fp)}(\hat F(\fp))=0$ if $\fn \notin \Sing (\hat R(\fp)/\hat F(\fp))$.
That's why we say "total Milnor number" and "total Tjurina number".
 \bigskip

We deduce now the semicontinuity of $ \mu^I_{k(\fp)}(\hat F(\fp))$ and $\tau^I_{k(\fp)}(\hat F(\fp)).$

\begin{proposition} \label{prop.mutau}
With the notations above, let $R=A[x]$, $F\in \x R$ and  $I\subset R$ an ideal such that $R/I$ finite over $A$. 
\begin{enumerate}
\item
$\mu^I_{k(\fp)} (\hat F(\fp))$ and $\tau^I_{k(\fp)} (\hat F(\fp))$ along $V(I)$  are semicontinuous on $\Spec A$. This holds
in particular for the total Milnor and Tjurina number.
\item  Let  $R/j(F)$ be finite over $A$.  Then $R/j(F)$ is flat over $A$ and the total Milnor number $\mu_{k(\fp)} (\hat F(\fp))$ is locally constant on $\Spec A$.
 \end{enumerate}
 
 \end{proposition}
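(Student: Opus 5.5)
Part 1 is a direct application of Theorem \ref{thm.scont}. Take $M:=R/j(F)$, resp. $M:=R/\langle F,j(F)\rangle$; both are finitely generated $R$-modules and $R=A[x]$ is Noetherian. By Corollary \ref{cor.pres} (equivalently Proposition \ref{prop.pctp} together with Lemma \ref{lem.compl}.6) the completed fiber is
$$\hat M(\fp)=\hat R(\fp)/j(F)\hat R(\fp)=\hat R(\fp)/j(\hat F(\fp)),$$
and similarly with $F$ added. Here one uses that differentiation with respect to $x_i$ commutes with passing to $k(\fp)[x]$ and then to the $I$-adic completion, which is the remark $(j(F)(\fp))^\wedge=j(\hat F(\fp))$ made before Definition \ref{def.mutau2}. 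Hence $\hat d_\fp(M)=\mu^I_{k(\fp)}(\hat F(\fp))$, resp. $\tau^I_{k(\fp)}(\hat F(\fp))$. Since $R/I$ is finite over $A$, Theorem \ref{thm.scont}.1 gives semicontinuity on $\Spec A$. For the total numbers, apply this with $I=j(F)$, resp. $I=\langle F,j(F)\rangle$; this requires $R/I$ finite over $A$, which is the standing hypothesis in Definition \ref{def.mutau2}.

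For part 2, take $I=j(F)$ and $M=R/j(F)$, which is finite over $A$. Now $\Supp_R(M)\subset V(I)$, so Proposition \ref{prop.Afin}.1 gives $\hat M(\fp)=M(\fp)$ for all $\fp$. By Theorem \ref{thm.scont}.2 (or Lemma \ref{lem.free}), local constancy follows once $M$ is shown to be $A$-flat, and this flatness is the main step.

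To prove flatness I would use Proposition \ref{prop.reg}(i) with $F_i=\partial F/\partial x_i$ and the module $R$, which is free and hence flat over $A$. It suffices to show that for every $\fp\in\Spec A$ with $M(\fp)\neq 0$, the partials $\partial F(\fp)/\partial x_1,\dots,\partial F(\fp)/\partial x_n$ form a regular sequence in $R(\fp)=k(\fp)[x]$. Then (i)(b)$\Rightarrow$(a) yields that $R/j(F)$ is $A$-flat. Regularity holds locally at each maximal ideal $\fn$ of $k(\fp)[x]$ containing $j(F(\fp))$. Indeed, $k(\fp)[x]/j(F(\fp))=M(\fp)$ is finite-dimensional over $k(\fp)$, since $M$ is finite over $A$. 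So the $n$ partials generate an ideal of height $n$ in the $n$-dimensional regular, hence Cohen–Macaulay, local ring $k(\fp)[x]_\fn$. Therefore they form a regular sequence there. Away from $V(j(F(\fp)))$ the quotient vanishes, and the exact sequences (\ref{eq.flat}) can be checked locally, so the sequence is globally regular on $k(\fp)[x]$.

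Two points need care. First, the definition of a regular sequence demands a nonzero quotient; this holds exactly for $\fp\in\Supp_A(M)$, matching the quantifier in (i)(b). Second, where $M(\fp)=0$ nothing is required. With flatness in hand, $M$ is a finite flat, hence locally free, $A$-module, so $d_\fp(M)=\hat d_\fp(M)=\mu_{k(\fp)}(\hat F(\fp))$ is locally constant.
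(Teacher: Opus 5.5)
Your Part 1, and your reduction of Part 2 to the $A$-flatness of $M=R/j(F)$ (followed by Proposition \ref{prop.Afin}.1 and Lemma \ref{lem.free}), coincide with the paper's proof. The paper gets flatness directly from \cite[Theorem 22.5--22.6]{Mat86}; you go through Proposition \ref{prop.reg}(i), which rests on the same theorem.

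\textbf{The gap.} It lies in the step ``away from $V(j(F(\fp)))$ the quotient vanishes, \ldots, so the sequence is globally regular on $k(\fp)[x]$.'' Regularity is a local property, but you must show that the intermediate kernels $K_{i-1}$ of (\ref{eq.flat}) vanish at \emph{every} maximal ideal of $k(\fp)[x]$. Outside $V(j(F(\fp)))$ only the last quotient vanishes. Your height argument says nothing there, because the ideal becomes the unit ideal.

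The global claim is in fact false. Take $A=\Q$, $n=3$ and $F=\tfrac12x_3^2(x_1^2+x_2^2)+x_3^3-3x_3\in\x R$. The partials are $x_3^2x_1$, $x_3^2x_2$ and $x_3(x_1^2+x_2^2)+3x_3^2-3$. Their common zeros are the two points $x_1=x_2=0$, $x_3=\pm1$, so $R/j(F)$ is finite over $A$ and nonzero. Yet $x_1\cdot x_3^2x_2=x_2\cdot x_3^2x_1$ with $x_1\notin\langle x_3^2x_1\rangle$, so the second partial is a zero divisor modulo the first. The defect sits at the origin, outside $V(j(F))$.

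Consequently hypothesis (b) of Proposition \ref{prop.reg}(i) fails here, and so does its conclusion (a). Also, with the module $R$ the quantifier in (i)(b) runs over $\Supp_A(R)=\Spec A$, not over $\Supp_A(R/j(F))$, so your ``matching the quantifier'' remark is off. The paper's own sentence (Cohen--Macaulay of dimension $n$ plus a zero-dimensional quotient, hence a $k(\fp)[x]$-regular sequence) makes the same global claim, so this step needs repair in both versions.

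\textbf{The repair.} Work locally on $\Spec R$, where flatness can be checked.
\begin{enumerate}
\item If $\fn\notin V(j(F))$, then $M_\fn=0$ and there is nothing to check.
\item If $\fn\in V(j(F))$, put $\fp=\fn\cap A$. Then $A_\fp\to R_\fn$ is a local homomorphism, $R_\fn$ is $A_\fp$-flat, and the partials lie in $\fn R_\fn$.
\item The fiber $R_\fn\otimes_A k(\fp)=R(\fp)_\fn$ is the localization of $k(\fp)[x]$ at a prime containing $j(F(\fp))$. This prime is maximal, since $k(\fp)[x]/j(F(\fp))$ is Artinian.
\item Hence $R(\fp)_\fn$ is an $n$-dimensional regular local ring in which the $n$ partials generate an ideal primary to the maximal ideal. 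So they form a regular sequence there, in any order.
\item Now \cite[Theorem 22.5]{Mat86} yields that $R_\fn/j(F)R_\fn$ is $A_\fp$-flat.
\end{enumerate}
Thus $M$ is $A$-flat, and the rest of your argument goes through unchanged.
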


\begin{proof} 
Set $M=R/ j(F)$ resp. $M=R/ \langle F, j(F) \rangle$. 
Then $\hat M(\fp)= \hat R(\fp)/j(\hat F(\fp))$ resp.
$\hat M(\fp)= \hat R(\fp)/\langle \hat F(\fp),j(\hat F(\fp))\rangle$
with $\hat {}$ denoting the $I$-adic completion. 

1. The semicontinuity of $\mu^I_{k(\fp)}(\hat F(\fp))$ resp.
$\tau^I_{k(\fp)}(\hat F(\fp))$
follows by applying Theorem \ref{thm.scont}.1. to $M$.

2. Since $R/j(F)$ is finite over $A$, for each $\fp \in \Spec A$ the fiber $R(\fp)/j(F(\fp)$ is finite over $k(\fp)$; here $F(\fp)$ denotes also the image of $F$ in $R(\fp) = k(\fp)[x_1,...,x_n]$. Since $R(\fp)$ is Cohen-Macaulay of dimension $n$ and $R(\fp)/j(F(\fp))$ has dimension 0,
the sequence 
$\partial{F(\fp)}/\partial {x_1},..., \partial{F({\fp})}/\partial {x_n}$ is a $k(\fp)[x]$-regular sequence. Since $R$ is flat over $A$ it follows from \cite {Mat86}, Theorem 22.5 - 22.6 that $R/j(F)$ is flat over $A$.
Hence Lemma \ref{lem.free} implies that 
$\dim_{k(\fp)} R(\fp)/j(F(\fp))$ is locally constant on $\Spec A$. Proposition \ref{prop.Afin} gives
$\dim_{k(\fp)} R(\fp)/j(F(\fp))=\dim_{k(\fp)} \hat R(\fp)/j(\hat F(\fp)) =\mu_{k(\fp)}(\hat F(\fp))$,  since 
$\Supp_R(R/j(F)) = V(j(F))$. This implies statement 2..
\end{proof}

\begin{example}\label{ex.mutau} {\em
 Let $K$ be a field, $A=K[t]$ and $R=A[x,y]$. A prime ideal of $A$ is either a maximal ideal of the form $\fp =\langle p(t) \rangle$, $p\in K[t]$ an irreducible (monic) polynomial, or the generic point $\fp =\langle 0 \rangle$. 
 
 If $\fp$ is maximal, then $k(\fp) = K[t]/\langle p \rangle$ is a finite field extension of  $K$ of degree 
 $\deg(p)$. With $\bar t $ the residue class of $t$ in $K[t]/\langle p \rangle$, we have
$k(\fp) \cong K(\alpha)$, $\alpha =\bar t$ a root of $p$ and
 $R(\fp) = K(\alpha)[x,y]$.
  If  $\fp =\langle 0 \rangle$ we get $k(\fp) = K(t)$  and 
  $R(\fp) = K(t)[x,y]$.
 \medskip
 
For $F\in A[x,y]$ we have
$j(F)(\fp) =j(F)\cdot k(\fp)[x,y] \in R(\fp)$. 
We compute the Milnor resp. Tjurina number of $F(\fp)$ 
according to the formulas \eqref{eq.mutau} and we determine $\hat R(\fp)$. We also show that  $\mu_K (F(\fp))$ may not be semicontinuous on  $\Spec A$, not even on the set of closed points (in contrast to $\mu_{k(\fp)} (F(\fp))$).

 \begin{enumerate}
\item [(1)] Let $F=x^2+y^2-2tx$ and $\chara (K) \ne 2$. Then $j(F) = \langle x-t, y\rangle$, 
$R(\fp)/j(F)(\fp)=k(\fp)[x,y]/ \langle x-t, y\rangle  \cong k(\fp)[x,y]/ \langle x, y\rangle)=k(\fp)$ and hence
$\mu_{k(\fp)} (F(\fp)) = 1$ for all $\fp \in \Spec A$.
Moreover,
$$\hat R(\fp) = (k(\fp)[x,y])^{\wedge,\langle x-t, y\rangle}  \ \cong  k(\fp)[x,y])^{\wedge,\langle x, y\rangle}  = k(\fp)[[x,y]]$$ 
is a local ring.
With the notations of Remark \ref{rm.facts}, 
we obtain $r=1$, $\bar \fm_{\fp,1} = \langle 0\rangle$, $\fm_{\fp,1} = \langle x-t, y\rangle \subset
\hat R(\fp) $
 and $k(\fp) = k(\fm_{\fp,1})$.\\

Since $\langle F , j(F) \rangle= \langle x-t, y, t^2 \rangle$ we get  $\tau_{k(\fp)} (F(\fp)) = 1$  if \ $\fp =\langle t \rangle$
and $\tau_{k(\fp)} (F(\fp)) = 0$ if $\fp \in \Spec A \setminus \{\langle t \rangle\}$ since $t^2$ is a unit in $\hat R(\fp)$.

\item [(2)] Let $F=x^3+y^2-3tx$, $\chara (K) \ne 2,3$. Then $j(F) = \langle x^2-t,y\rangle$ and $\langle F , j(F) \rangle= \langle y,t^2,xt,x^2-t\rangle$. For $\fp =\langle p(t) \rangle$ a maximal ideal and $\alpha$ a root of $p(t)$, we get
$\hat R(\fp)/j(F(\fp)) = K(\alpha)[x,y]/\langle x^2 -\alpha, y\rangle \cong K(\alpha)[x]/\langle x^2 -\alpha \rangle.$ This is either a local or semi-local ring, depending on $\fp$.   \\
Let $\fp$ be maximal.\\
- If $\sqrt{\alpha} \in  K(\alpha)$ then 
$\hat R(\fp)/j(F(\fp))$ is semi-local (a direct sum of two fields) with maximal ideals 
$\bar \fm_{1,2}= \overline{\langle x \pm\sqrt{\alpha},y \rangle}$. Then, since $K(\alpha)[x,y]^{\wedge, \langle x \pm\sqrt{\alpha},y\rangle} \cong  K(\alpha)[[x,y]]$,
we have according to proposition \ref{prop.sem}
$$\hat R(\fp) \cong K(\alpha)[[x,y]] \oplus K(\alpha)[[x,y]].$$
- If $\sqrt{\alpha} \notin  K(\alpha)$ then 
$\hat R(\fp)/j(F(\fp) \cong K(\sqrt{\alpha})$ is field 
and 
$$\hat R(\fp) \cong K(\alpha)[x,y]^{\wedge, \langle x^2 -\alpha,y\rangle} \cong K(\sqrt{\alpha})[[x,y]].$$
Let $\fp =\langle 0 \rangle$. Then 
 $\hat R(\fp)/j(F(\fp))
\cong K(t)[x,y]/\langle x^2-t,y\rangle\cong K(\sqrt{t})$ and hence $\hat R(\fp) \cong K(\sqrt{t})[[x,y]].$\\
We get $\mu_{k(\fp)} (F(\fp)) = 2$ for all $\fp \in \Spec A$
and (as in (1))  $\tau_{k(\fp)} (F(\fp)) = 2$  if \ $\fp =\langle t \rangle$
and $\tau_{k(\fp)} (F(\fp)) = 0$ if $\fp \in \Spec A \setminus \{\langle t \rangle\}$.
\item [(3)] For $\fp=\langle p(t)\rangle$ a maximal ideal we get
$\mu_K (F(\fp)) =\deg(p)\cdot \mu_{k(\fp)} (F(\fp)) <\infty$,
but since $K(t)$ is a infinite extension of $K$ we get $\mu_K (F(\langle 0 \rangle))= \infty$ if $\fp = \langle 0 \rangle$.
Hence $\mu_K(F(\fp))$ is not semicontinuous on $\Spec K[t]$.\\
For example, let $F=x^3+y^2-3tx \in \Q[t,x,y]$. If $\fp =\langle t-c \rangle$, $c \in \Q$, then $\mu_{\Q} (F(\fp)) =2$.
If $\fp_n =\langle t^2-1/n \rangle$ and $n$ not a square, then $\mu_{\Q} (F(\fp_n)) =4$ and $\fp_n$ is arbitrary close to $\langle t\rangle$ for large $n$. This shows that  $\mu_{\Q} (F(\fp))$ is not semicontinuous on the set of closed points of
$\Spec \Q[t]$. In fact, $\mu_{\Q} (F(\fp))$ can be arbitrary big for $\fp$ close to $\langle t\rangle$.
\end{enumerate}
}
\end{example}

To make the connection between the results of  \cite{GP21} and of this paper  we
consider besides families of polynomials also families of power series. In the following Corollary let $R=A[x]$ or $R=A[[x]]$ and let $I = \ker(\sigma: R\to A)$ be  the ideal of a section $\sigma$  of the natural map
$A\to R$. Then $\hat R(\fp)/I \hat R(\fp) = R(\fp)/I R(\fp)=
(R/I)\otimes_A k(\fp) \cong k(\fp)$ and, according to Remark \ref{rm.facts}.3., $\hat R(\fp) \cong k(\fp)[[x]]$. For $F\in R$ we have $F(\fp) \in k(\fp)[[x]]$ and therefore $\mu_{k(\fp)}^I (F(\fp)) =  \dim_{k(\fp)}  k(\fp)[[x]]/j(F(\fp))$ is the {\em Milnor number along the section $\sigma$}, and
similar for the Tjurina number along the section $\sigma$.

The semicontinuity of $\mu$ and $\tau$ along the trivial section in the case $R=A[[x]]$ was proven in \cite[Proposition 3.1]{GP21} under additional assumptions, which can be now be dispensed with. Theorem \ref{thm.scont} implies

\begin{corollary}\label{cor.mutau}
Let $A$ be a Noetherian ring, $R=A[x]$ or $R=A[[x]]$, $F \in R$, and $I\subset R$ the ideal of a section $\sigma$ of $A\to R$.
 For $\fp \in \Spec A$ denote by $\hat F(\fp)$
the image of $F$ in $k(\fp)[[x]]$.
Then the Milnor and the Tjurina number along the section $\sigma$ 
$$
\begin{array}{lll}
\mu_{k(\fp)}  (\hat F(\fp)) &=& \dim_{k(\fp)}  k(\fp)[[x]]/ j(\hat F(\fp))
\text{ and }\\
\tau_{k(\fp)}  (\hat F(\fp)) &=& \dim_{k(\fp)}  k(\fp)[[x]]/ \langle \hat F(\fp), j(\hat F(\fp))\rangle
\end{array}
 $$
are semicontinuous on $\Spec A$.
\end{corollary}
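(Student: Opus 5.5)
The plan is to apply Theorem \ref{thm.scont}.1 to the two finitely generated $R$-modules $M=R/j(F)$ and $M=R/\langle F,j(F)\rangle$, exactly as in the proof of Proposition \ref{prop.mutau}. Two hypotheses have to be checked: $R$ is Noetherian and $R/I$ is finite over $A$. For $R=A[x]$ or $R=A[[x]]$ with $A$ Noetherian, $R$ is Noetherian. Since $I=\ker(\sigma)$ and $\sigma\circ\phi=id_A$, we have $R/I\cong A$ via $\sigma$, so $R/I$ is finite (even free of rank one) over $A$. Theorem \ref{thm.scont}.1 then says that $\fp\mapsto\dim_{k(\fp)}\hat M(\fp)$ is semicontinuous on $\Spec A$.

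It remains to identify $\hat M(\fp)$ with the quotients in the statement. By Corollary \ref{cor.pres} (or Corollary \ref{cor.rex}), completed base change preserves presentations. Hence $\hat M(\fp)=\hat R(\fp)/j(F)\hat R(\fp)$, respectively $\hat M(\fp)=\hat R(\fp)/\langle F,j(F)\rangle\hat R(\fp)$.

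Next I need $\hat R(\fp)\cong k(\fp)[[x]]$ compatibly with the images of $F$ and its partial derivatives. Here $\hat R(\fp)/I\hat R(\fp)=(R/I)\otimes_A k(\fp)=k(\fp)$, as noted before the corollary. So $\hat R(\fp)$ is local with residue field $k(\fp)$ by Proposition \ref{prop.sem}.4, and by Remark \ref{rm.facts}.3 it is $k(\fp)[[x]]$. To make this concrete, put $a_i=\sigma(x_i)\in A$. Then $I=\langle x_1-a_1,\dots,x_n-a_n\rangle$: the inclusion $\supseteq$ is clear, and modulo these elements every element of $R$ is congruent to an element of $A$ (for $A[[x]]$, use that $x_i-a_i$ generate an ideal whose completion recovers the translated power series ring). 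The completion of $k(\fp)[x]$ (resp. of $k(\fp)\otimes_A A[[x]]$) along $\langle x_i-\bar a_i\rangle$ is $k(\fp)[[x-\bar a]]\cong k(\fp)[[x]]$, by Example \ref{ex.pctp} and Proposition \ref{prop.pctp}.1.

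The coordinate change $x_i\mapsto x_i+\bar a_i$ commutes with $\partial/\partial x_i$. Therefore $j(F)\hat R(\fp)=j(\hat F(\fp))$, which is the remark $(j(F)(\fp))^\wedge=j(\hat F(\fp))$ made before Definition \ref{def.mutau2}. Thus $\dim_{k(\fp)}\hat M(\fp)$ equals $\mu_{k(\fp)}(\hat F(\fp))$, respectively $\tau_{k(\fp)}(\hat F(\fp))$, and semicontinuity follows.

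The main obstacle is the case $R=A[[x]]$ with a nontrivial section. There $\sigma(x_i)=a_i$ must lie in the part of $A$ on which $\sigma$ is continuous, so in fact $a_i$ is topologically nilpotent compatibly. The identification of the completed fibre with $k(\fp)[[x]]$ is cleanest if one argues abstractly: $R$ is $I$-smooth over $A$ with $R/I=A$ (Lemma \ref{lem.smooth} and Lemma \ref{lem.ismooth}.2), so the fibres are complete local, formally smooth with residue field $k(\fp)$, hence $\cong k(\fp)[[x]]$ by Theorem \ref{thm.cohen}. One must then verify that the Jacobian ideal is intrinsic under this isomorphism. I would treat the trivial section $I=\x$ directly, where Example \ref{ex.pctp} gives $\hat R(\fp)=k(\fp)[[x]]$ with no coordinate change. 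I would then reduce a general section to this case via the $A$-automorphism $x_i\mapsto x_i+a_i$, which preserves $j(F)$.
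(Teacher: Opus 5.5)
Your proposal is correct and is essentially the paper's argument. The paper simply applies Theorem \ref{thm.scont} to $M=R/j(F)$ and $M=R/\langle F,j(F)\rangle$, using $R/I\cong A$ and the identification $\hat R(\fp)\cong k(\fp)[[x]]$ from the discussion preceding the corollary (Remark \ref{rm.facts}.3); you make that identification explicit via the translation $x_i\mapsto x_i+a_i$. The point you leave vague for $R=A[[x]]$ does hold: $\sigma$ factors through $A[[x]]/\langle x_1-a_1,\dots,x_n-a_n\rangle\cong A^{\wedge,\langle a_1,\dots,a_n\rangle}$ and so gives a ring retraction of $A\to A^{\wedge,\langle a_1,\dots,a_n\rangle}$, which forces $A$ to be $\langle a_1,\dots,a_n\rangle$-adically complete, so the translation is an $A$-automorphism of $A[[x]]$ (whereas Lemma \ref{lem.ismooth}.2 does not give $I$-smoothness directly, since $\x\not\subset I$ for a nontrivial section).
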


If $\fp$ is a  maximal ideal of $A$, then we can also prove the individual semicontinuity of $\dim_{k(\fm_{\fp,i})} \hat M(\fp)_{\fm_i}$ and hence of $\mu^I_{k(\fm_{\fp,i})}(\hat F(\fm_{\fp,i}))$ and $\tau^I_{k(\fm_{\fp,i})}(\hat F(\fm_{\fp,i}))$:

 Let $\fm_i$ be a maximal ideal of 
$\hat R(\fp)$ and $\tilde \fn_i =\fm_i \cap P$. Then
according to Proposition \ref{prop.sem2} (since $R$ is of finite type and flat over $A$) and Remark \ref{rm.cont} (2),
$\tilde \fn_i$ is a maximal ideal of $R$ and there  exists   
an open neighbourhood $U_i = \Spec Q_i \subset \Spec R$  of $\tilde\fn_i$, with $\tilde \fn_j \notin U_i$ if $j\ne i$ and 
$\fm_i \hat Q_i(\fp)$ is the only maximal ideal of $\hat Q_i(\fp)$. There exists an  open neigbourhood  $V_i = \Spec A_i \subset \Spec A$ of $\fp$ such that the restriction $U_i \to V_i$ of $\Spec R \to \Spec A$, is flat and open and that
$Q_i/IQ_i$ is finite over $A_i$.

\begin{corollary}\label{cor.mutau2}
With the assumptions of Proposition \ref{prop.mutau} 
 and $\fp$ a  {\em maximal ideal} of $A$
the following holds.
For $\fq \in \Spec A_i$ the ring $\hat Q_i(\fq)$ is semi-local with maximal ideals, say, $\fn_{i,1},...,\fn_{i,t}$,
and $\hat M_{Q_i} := M\hat \otimes_R Q_i$ satisfies $\hat M_{Q_i} (\fp) = \hat M(\fp)_{\fm_i}$. 
Then the semicontinuity of
 $\dim_{k(\fm_{\fp,i})} \hat M(\fp)_{\fm_i}$
 means
$$\dim_{k(\fp)} \hat M(\fp)_{\fm_i} \geq
\sum_{j=1}^t \dim_{k(\fq)} \hat M_{Q_i}(\fq)_{\fn_{i,j}}
 \ \text {for all } \fq \in U_i.$$
In particular,
$$ 
\begin{array}{lll}
 \mu^I_{k(\fm_{\fp,i})} (\hat F(\fm_{\fp,i})) & \geq &
\sum_{j=1}^t [k(\fq):k(\fn_{\fq,j})]\mu_{k(\fn_{\fq,j})}(\hat F(\fn_{\fq,j})),\\
\tau^I_{k(\fm_{\fp,i})} (\hat F(\fm_{\fp,i})) &\geq &
\sum_{j=1}^t [k(\fq):k(\fn_{\fq,j})]\tau_{k(\fn_{\fq,j})}(\hat F(\fn_{\fq,j})). 
\end{array}
$$
\end{corollary}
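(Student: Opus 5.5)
The plan is to reduce Corollary \ref{cor.mutau2} to Theorem \ref{thm.scont}.1, applied to the localized family $A_i \to Q_i$ that Proposition \ref{prop.sem2} supplies. Take $M = R/j(F)$ (resp.\ $M=R/\langle F,j(F)\rangle$). By Proposition \ref{prop.sem2}.1, $Q_i=R_{\varphi(g_i)}$ with $Q_i/IQ_i$ finite over $A_i=A_{g_i}$. Hence the hypotheses of Theorem \ref{thm.scont} hold for $A_i\to Q_i$, the ideal $IQ_i$, and the finite $Q_i$-module $M_{Q_i}=M\otimes_R Q_i$. So $\fq\mapsto \dim_{k(\fq)}\hat M_{Q_i}(\fq)$ is semicontinuous on $\Spec A_i$. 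After shrinking, there is an open neighbourhood of $\fp$ on which $\dim_{k(\fq)}\hat M_{Q_i}(\fq)\le \dim_{k(\fp)}\hat M_{Q_i}(\fp)$.

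Next I identify the two sides of this inequality.

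\emph{Right side.} For $\fq\in\Spec A_i$, Proposition \ref{prop.sem}.4 applied to $A_i\to Q_i$ gives that $\hat Q_i(\fq)$ is semi-local with maximal ideals $\fn_{i,1},\dots,\fn_{i,t}$. It also gives $\hat M_{Q_i}(\fq)=\oplus_j \hat M_{Q_i}(\fq)_{\fn_{i,j}}$, so $\dim_{k(\fq)}\hat M_{Q_i}(\fq)=\sum_j\dim_{k(\fq)}\hat M_{Q_i}(\fq)_{\fn_{i,j}}$.

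\emph{Left side.} At $\fq=\fp$ I must show $\hat M_{Q_i}(\fp)=\hat M(\fp)_{\fm_i}$. The maximal ideals of $\hat Q_i(\fp)$ correspond, by Remark \ref{rm.sem0}(1), to the primes of $V(IQ_i)$ over $\fp$. These are the $\tilde\fn_j$ lying in $U_i=\Spec Q_i$, and by construction only $\tilde\fn_i$ does. So $\hat Q_i(\fp)$ is local with maximal ideal $\fm_i\hat Q_i(\fp)$.

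Both $\hat Q_i(\fp)$ and $\hat R(\fp)_{\fm_i}$ are the completion of the local ring $R(\fp)_{\tilde\fn_i}=Q_i(\fp)_{\tilde\fn_i}$ at its maximal ideal, by Proposition \ref{prop.sem}.4, since localizing $R$ at $\varphi(g_i)$ does not change the stalk at $\tilde\fn_i$. The same holds for $M$ via Lemma \ref{lem.sem2}.2. This gives the first displayed inequality, on the open set where the semicontinuity holds; this is the $U_i$ of the statement, shrunk if necessary.

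For the Milnor and Tjurina statements, note that $\hat M_{Q_i}(\fq)_{\fn_{i,j}}\cong k(\fn_{i,j})[[x]]/j(\hat F(\fn_{i,j}))$ (resp.\ with $\hat F$ added), using Remark \ref{rm.facts}.3 for $Q_i$. Then
$$\dim_{k(\fq)}\hat M_{Q_i}(\fq)_{\fn_{i,j}}=[k(\fn_{i,j}):k(\fq)]\,\mu_{k(\fn_{i,j})}(\hat F(\fn_{i,j})),$$
and the same holds for $\tau$. On the left, $\dim_{k(\fp)}\hat M(\fp)_{\fm_i}$ is, by definition of the Milnor number along $V(I)$ localized at $\fm_i$, the quantity written $\mu^I_{k(\fm_{\fp,i})}(\hat F(\fm_{\fp,i}))$ (counted over $k(\fp)$, i.e.\ with the degree factor). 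I will read the degree factor in the statement as $[k(\fn_{\fq,j}):k(\fq)]$, since the printed index order appears to be a typo.

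I expect the main obstacle to be the identification $\hat M_{Q_i}(\fp)=\hat M(\fp)_{\fm_i}$. This requires checking carefully that completing along $IQ_i$ in the fiber over the maximal ideal $\fp$ isolates exactly the component at $\tilde\fn_i$. The semicontinuity itself is immediate from Theorem \ref{thm.scont}.
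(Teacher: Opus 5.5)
Your route is the paper's own: Theorem \ref{thm.scont} applied to the family $A_i\to Q_i$ supplied by Proposition \ref{prop.sem2} and Remark \ref{rm.cont}(2). Your reading of the degree factors is also the necessary correction: $[k(\fm_{\fp,i}):k(\fp)]$ belongs on the left and $[k(\fn_{\fq,j}):k(\fq)]$ on the right. The literal left side already fails with $r_\fp=1$, e.g.\ in Example \ref{ex.mutau}(2) over $\Q$ at $\fp=\langle t-2\rangle$.

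However, the step you flagged as the main obstacle genuinely fails. You use two properties of $Q_i$ at once:
\begin{itemize}
\item $Q_i=R_{\varphi(g_i)}$ with $\fp\in D(g_i)$, which is what makes $Q_i/IQ_i=(R/I)_{g_i}$ finite over $A_i$;
\item $\tilde\fn_j\notin\Spec Q_i$ for $j\neq i$.
\end{itemize}
These are incompatible. Each $\tilde\fn_j$ lies over $\fp$, so $g_i\notin\tilde\fn_j\cap A$ and hence $\tilde\fn_j\in\Spec Q_i$. In fact $Q_i(\fp)=R(\fp)$, so $\hat M_{Q_i}(\fp)=\hat M(\fp)$, not $\hat M(\fp)_{\fm_i}$. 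The proof of Proposition \ref{prop.sem2}.1 contains the same error.

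No other Zariski-open choice repairs this. Let $\pi\colon\Spec R\to\Spec A$, and suppose $\Spec Q_i$ lies over an open $V_i\ni\fp$ with $Q_i/IQ_i$ finite over $A_i$. Then $\Spec Q_i\cap V(I)$ is open in $V(I)\cap\pi^{-1}(V_i)$, and, being finite over $V_i$, it is also closed there. So it is a union of connected components and cannot separate $\tilde\fn_i$ from a $\tilde\fn_j$ in the same component.

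In fact the individual inequality is false in general, so the gap cannot be closed without changing the statement. Take $A=\C[t]$, $R=A[x,y]$, $F=x^3+y^2-3tx$, $I=j(F)=\langle x^2-t,y\rangle$, $M=R/j(F)$ and $\fp=\langle t-1\rangle$, as in Example \ref{ex.mutau}(2). Then $\hat M(\fp)\cong\C\oplus\C$ and $\dim_\C\hat M(\fp)_{\fm_1}=1$. Since $V(I)\cong\Spec\C[x]$ is irreducible, every open $U_1\ni\tilde\fn_1$ of $\Spec R$ contains the generic point $\langle x^2-t,y\rangle$ of $V(I)$, which lies over $\langle 0\rangle$. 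This holds with or without the finiteness condition, e.g.\ for $U_1=D(x+1)$. Hence $\hat M_{Q_1}(\langle 0\rangle)\cong\C(t)[x]/\langle x^2-t\rangle$ has $\C(t)$-dimension $2>1$, and $\langle 0\rangle$ lies in every neighbourhood of $\fp$.

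What does hold is the following.
\begin{itemize}
\item The summed inequality of Theorem \ref{thm.scont}, i.e.\ Proposition \ref{prop.mutau}.
\item The individual inequality under an extra splitting hypothesis. Suppose that for some $g\notin\fp$ the finite $A_g$-algebra $(R/I)_g$ is a product with $\tilde\fn_i$ alone in one factor. Lift that idempotent to the $I$-adically complete ring $(R_g)^{\wedge,I}$. Then apply Theorem \ref{thm.scont} over $A_g$ to the corresponding direct summand of $M\otimes_R(R_g)^{\wedge,I}$.
\end{itemize}
In general such a splitting exists only after an \'etale (Henselian) base change of $A$ at $\fp$.
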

\bigskip

\noindent {\bf Complete intersection singularities}
\medskip

Let $K$ be a field and $J$ an ideal of $K[[x]]$, $x=(x_1,...,x_n)$,  generated by $f_1, \ldots, f_m$. Recall that the ring
 $K[[x]]/J$ or the ideal $J$ is called a {\it complete intersection} if $\dim(K[[x]]/J) = n - \mng(J)$ with $\mng(J)$ the minimal number of generators of $J \subset K[[x]]$. The complete intersection $K[[x]]/J$  is called an {\it isolated complete intersection singularity (ICIS)} if it has an isolated singularity (cf. Definition \ref{def.iso}).
 

We first consider an arbitrary ideal of $K[[x]]$,  generated by $f_1, \ldots, f_m$.  Set $f:=(f_1,...,f_m) \in K [[x]]^m$ and denote by  $\langle {\partial f}/{\partial x_1}, \ldots, {\partial f}/{\partial x_n}\rangle$ the submodule of $K[[x]]^m$, generated by the m-tuples ${\partial f}/{\partial x_i} = ({\partial f_1}/{\partial x_i},\dots, {\partial f_m}/{\partial x_i})$,  $i=1,\ldots, n$.

\begin{definition}\label{def.tau}
Let $f_1, \ldots, f_m$ be generators of $J$. We set
$$
\begin{array}{lll}
T_{J/K} &:= & K[[x]]^m\big/J\cdot K[[x]]^m+\langle {\partial f}/{\partial x_1}, \ldots, {\partial f}/{\partial x_n}\rangle\cdot K[[x]].\\
\tau_K(J) &:= &\dim_K T_{J/K},
\end{array}
$$
and call $T_{J/K}$ the {\em Tjurina module of $J$ w.r.t. $K$} and
$\tau_K(J)$ the {\em Tjurina number of $J$ w.r.t. $K$}.
\end{definition}

\begin{remark}{\em
The notation $T_{J/K}$ is a bit misleading, as $T_{J/K}$ depends not just on $J$ but also on the chosen generators.
However, $T_{J/K}$ is unique up to isomorphism and 
$\tau_K(J)$ is an invariant of $K[[x]]/J$.

To see this,  let $g_1, \ldots, g_k$ be another  set of generators of $J$. W.l.o.g.  let $m \ge k$ and set $g_j=0$ for $j=k+1,...,m$. 
By Proposition \ref{prop.contact} below there exists an invertible matrix $U \in GL_m(K[[x]])$ such that 
$(f_1,...,f_m) = (g_1,...,g_m)\cdot U$. Then $U$ induces an isomorphism between $T_{J/K}$ defined with $f_1,...,f_m$ and $T_{J/K}$ defined with $g_1,...,g_m$.
}
\end{remark}

\begin{proposition}[ \cite{GPh19}, Proposition 4.3]
\label{prop.contact}
	Let $I$ and $J$ be proper ideals of $K[[x]]$ given by the same number of $m$ generators $\{f_1,...,f_m\}$ and
$\{g_1,...,g_m\}$.
	Then the following are equivalent:
\begin{enumerate}
\item The analytic algebras $K[[x]]/I$ and $K[[x]]/J$ are isomorphic.
\item  There is an automorphism $\phi\in Aut(K[[x]])$ such that $\phi(I)=J$.
\item There are  $\phi\in Aut(K[[x]])$ and $U\in GL(m,K[[x]])$ such that $(f_1,...,f_m) = (\phi(g_1),...,\phi(g_m))\cdot U$.
\end{enumerate}
Moreover, if $I=J$, then $\phi$ can be chosen as the identity.
\end{proposition}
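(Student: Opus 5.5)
The plan is to prove the cycle $1.\Rightarrow 2.\Rightarrow 3.\Rightarrow 1.$ and treat the case $I=J$ at the end.

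For $1.\Rightarrow 2.$, let $\psi: K[[x]]/I \to K[[x]]/J$ be a $K$-algebra isomorphism; it is automatically local, hence $\x$-adically continuous. Choose lifts $h_i\in \x K[[x]]$ of $\psi(\bar x_i)$ and define $\phi: K[[x]]\to K[[x]]$ by $x_i\mapsto h_i$. This substitution is well defined because the $h_i$ have no constant term, and it satisfies $\phi(I)\subset J$. The problem is that $\phi$ need not be an automorphism, since the $h_i$ might have dependent linear parts. I expect this to be the main obstacle, and I would handle it with an embedding-dimension argument. First reduce both ideals: after a linear change of coordinates and the formal implicit function theorem, write $K[[x]]/I\cong K[[y]]/I'$ with $I'\subset \langle y\rangle^2$ and $y=(x_1,\dots,x_e)$. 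Since the isomorphism preserves $\dim \fm/\fm^2$, the same $e$ works for $J$. Next lift the isomorphism between the reduced presentations to an endomorphism of $K[[y]]$. This lift induces an isomorphism on $\fm/\fm^2$, so its linear part is invertible and it is an automorphism by the inverse function theorem. Finally extend it by the identity on the eliminated variables and compose with the reduction automorphisms to obtain $\phi$ with $\phi(I)=J$.

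For $2.\Rightarrow 3.$, suppose $\phi(I)=J$. Then $J$ has two generating systems with the same number $m$ of elements, namely $(\phi(f_1),\dots,\phi(f_m))$ and $(g_1,\dots,g_m)$. The point is to show that two $m$-element generating systems of one ideal $J$ in the local ring $K[[x]]$ differ by a matrix in $GL(m,K[[x]])$; this is the "moreover" part. Write $\mng(J)=s$ and take a minimal system $h_1,\dots,h_s$. By Nakayama, the images of any generating system in $J/\x J$ span this $s$-dimensional space. Hence after a $K$-linear invertible change (lifted to an invertible matrix), $m-s$ of the elements of a generating system can be taken to lie in $\x J$. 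Subtracting suitable combinations of the first $s$ elements then makes them zero, again by an invertible (unipotent) matrix, and the first $s$ elements form a minimal system. Two minimal systems differ by a square matrix that is invertible modulo $\x$, hence invertible. Composing these matrices gives $U$ with $(\phi(f_1),\dots,\phi(f_m))\cdot U^{-1}=(g_1,\dots,g_m)$, possibly after renaming which of $\phi$ and $\phi^{-1}$ is applied so that the relation matches the form stated in 3.

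For $3.\Rightarrow 1.$, the relation in 3. gives $I=\phi(J)$ as ideals, because $U$ is invertible. Hence $\phi^{-1}$ induces an isomorphism $K[[x]]/I\to K[[x]]/J$. If $I=J$, the argument of $2.\Rightarrow 3.$ applies directly with $\phi=\mathrm{id}$.
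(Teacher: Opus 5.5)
The paper does not prove this proposition; it quotes it from \cite[Proposition 4.3]{GPh19}, so there is no argument in the text to compare yours with. Your proof is correct and self-contained. The cycle $1.\Rightarrow 2.\Rightarrow 3.\Rightarrow 1.$ goes through. The lemma you prove in $2.\Rightarrow 3.$ also settles the ``moreover'' part: two $m$-element generating systems of the same ideal of $K[[x]]$ differ by a matrix in $GL(m,K[[x]])$, via Nakayama and the fact that a matrix invertible modulo $\x$ is invertible.

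\textbf{A shorter route for $1.\Rightarrow 2.$} The splitting-off of variables is not needed; you can repair your first lift $\phi$ directly.
\begin{enumerate}
\item The linear part $L$ of $\phi$ maps $(I+\x^2)/\x^2$ into $(J+\x^2)/\x^2$. Both spaces have dimension $n-e$, and $L$ induces the cotangent isomorphism $\x/(I+\x^2)\to\x/(J+\x^2)$.
\item Adding elements of $J$ to the $h_i$ does not change $\phi$ modulo $J$. It lets you change $L$ by any linear map $D:\x/\x^2\to(J+\x^2)/\x^2$.
\item Take $D=T-L$ on $(I+\x^2)/\x^2$, with $T$ an isomorphism onto $(J+\x^2)/\x^2$, and $D=0$ on a complement. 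Then $L+D$ is invertible.
\end{enumerate}
This yields in one step an automorphism lifting the given $\psi$. Your version is equally valid, but it needs the iterated elimination of variables and the bookkeeping of the reduction automorphisms.

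\textbf{Two small points to tighten.}
\begin{enumerate}
\item In either version, say explicitly why $\phi(I)=J$ and not only $\phi(I)\subset J$. For a bijective lift $\phi$ of an isomorphism $\psi$ one has $\phi^{-1}(J)=\ker(\pi_J\circ\phi)=\ker(\psi\circ\pi_I)=I$.
\item In $2.\Rightarrow 3.$, the ``renaming'' is cleanest if you apply your lemma to $I=\phi^{-1}(J)$ with the systems $(f_1,\dots,f_m)$ and $(\phi^{-1}(g_1),\dots,\phi^{-1}(g_m))$. This gives 3. directly, with $\phi^{-1}$ as the automorphism. The intermediate step pushing $m-s$ elements into $\x J$ is superfluous: once $s$ of the elements generate $J$, the others can be eliminated directly.
\end{enumerate}
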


For a complete intersection $T_{J/K}$ is concentrated on the singular locus of $K[[x]]/J$ (Definiton \ref{def.sing}, see also Lemma \ref{lem.slocus})
and $\tau_K(J)$ is the usual Tjurina number for an ICIS. It
is finite iff $J$ has an isolated singularity, which follows from \cite[Lemma 3.1]{GPh19}. In fact, we have for any ideal the following Theorem.
\medskip

\begin {theorem}  [\cite{GPh19}, Theorem 4.6] \label{thm.GPh19}
Let $J \subset  K[[x]]$ be a proper ideal. Then the following are equivalent:
\smallskip

(i) $\tau_K(J) <\infty$.	
\smallskip

(ii)  $K[[x]]/J$ is an ICIS. 
\smallskip
\end{theorem}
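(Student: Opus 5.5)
We prove (ii) $\Rightarrow$ (i) and (i) $\Rightarrow$ (ii) separately, working throughout in $P:=K[[x]]$ with $J=\langle f_1,\dots,f_m\rangle$. We may take $m=\mng(J)$, which is allowed by the remark after Definition \ref{def.tau}. After a coordinate change we may also assume $J\subset \x^2$, splitting off smooth factors otherwise: if some $f_i$ has a nonzero linear part, Lemma \ref{lem.jac} and the implicit function theorem let us eliminate a variable. This step changes neither $\tau_K(J)$ nor the ICIS property.

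\emph{(ii) $\Rightarrow$ (i).} Let $J$ be a complete intersection with $d=\dim P/J=n-m$. The support of $T_{J/K}$ is $V(J+I_d(Jac(J)))$. Indeed, $T_{J/K}$ is the cokernel of the map $P^n\to (P/J)^m$ given by the transposed Jacobian, so its support equals the zero set of the $0$-th Fitting ideal of that cokernel. This Fitting ideal is the ideal of $m\times m$ minors modulo $J$, namely $I_d(Jac(J))$ (Corollary \ref{cor.fitt}). By Lemma \ref{lem.slocus} (a complete intersection is Cohen--Macaulay, hence pure dimensional) we get $V(J+I_d(Jac(J)))=\Sing(P/J)$. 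If the singularity is isolated, this set is contained in $\{\x\}$. Hence $T_{J/K}$ is a finite $P$-module supported at most at the maximal ideal, so it has finite length and $\tau_K(J)<\infty$.

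\emph{(i) $\Rightarrow$ (ii).} Assume $\tau_K(J)<\infty$. Then $\x^k T_{J/K}=0$ for some $k$, and therefore $I_d(Jac(J))+J\supset \x^N$ for the appropriate Fitting ideal. The main obstacle is to show that $J$ is a complete intersection, since finiteness of $\tau$ alone does not obviously force $\dim P/J=n-m$. I would argue as follows. Let $\fn\ne\x$ be a prime containing $J$. Then $(T_{J/K})_\fn=0$, so the Jacobian matrix has an $m\times m$ minor that is a unit in $P_\fn$. By Cohen (Theorem \ref{thm.cohen}) and Lemma \ref{lem.jac} applied to the completion of $P_\fn$, the ring $(P/J)_\fn$ is regular and $J_\fn$ is generated by part of a regular system of parameters of length $m$. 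Consequently $\operatorname{ht}(J_\fn)=m$ for every such $\fn$. If $\dim P/J\ge 1$, every minimal prime of $J$ is different from $\x$, so all minimal primes have height $m$ and $\dim P/J=n-m$, which means $J$ is a complete intersection. Its singular locus lies in $\{\x\}$, so it is an ICIS.

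It remains to handle the case $\dim P/J=0$. Here one needs the deformation-theoretic fact that a zero-dimensional non-complete-intersection has infinite $\tau$; equivalently, since $T_{J/K}$ has length at least $m$ times a nonzero quantity, one must show $\mng(J)\le n$ forced by $J\subset\x^2$. I would try this first: $T_{J/K}\twoheadrightarrow (P/J)^m/\x(P/J)^m\cdot(\text{image})$. Since $J\subset\x^2$, the derivatives lie in $\x$, so $T_{J/K}\otimes K=K^m$. That alone gives no contradiction, so I expect to invoke \cite[Lemma 3.1]{GPh19} (or the corresponding statement in \cite{GPh19}) for this zero-dimensional step. This is the delicate point of the proof.
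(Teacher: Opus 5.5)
\textbf{Gap: the zero-dimensional case, which cannot be closed.} Your argument for (i)$\Rightarrow$(ii) is correct when $\dim K[[x]]/J>0$. The zero-dimensional case, however, is not just unfinished: the fact you propose to invoke is false.

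If $J$ is $\x$-primary, then $T_{J/K}$ is a quotient of $(K[[x]]/J)^m$. Hence $\tau_K(J)\le m\cdot\dim_K K[[x]]/J<\infty$ for \emph{every} such $J$, whether or not it is a complete intersection. For example, $J=\x^2=\langle x_1^2,x_1x_2,x_2^2\rangle\subset K[[x_1,x_2]]$ has $\tau_K(J)\le 9$. But $\mng(J)=3$, so $\dim K[[x]]/J=0\neq n-\mng(J)$ and $K[[x]]/J$ is not a complete intersection. In particular, $J\subset\x^2$ does not force $\mng(J)\le n$.

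So (i)$\Rightarrow$(ii) fails as stated for Artinian quotients, and the statement needs an extra hypothesis such as $\dim K[[x]]/J>0$. The paper only cites \cite{GPh19} and gives no argument covering this case. Under that hypothesis your proof of this direction is complete, and the preliminary reduction to $J\subset\x^2$ is unnecessary.

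\textbf{Two smaller points.} First, the local step. Cohen's theorem together with Lemma \ref{lem.jac} is not the right justification: the residue field of $P_{\fn}$ is $k(\fn)$, not $K$, and the $\partial/\partial x_i$ are not derivatives with respect to Cohen coordinates of the completion. The direct argument works in every characteristic. Since $\partial(gh)\in\fn$ for $g,h\in\fn$, the $\partial/\partial x_i$ induce a $k(\fn)$-linear map $\fn P_{\fn}/\fn^2P_{\fn}\to k(\fn)^n$ sending $f_j$ to the $j$-th row of the Jacobian matrix modulo $\fn$. Rank $m$ makes $f_1,\dots,f_m$ linearly independent in the cotangent space, hence part of a regular system of parameters of $P_{\fn}$.

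Second, for (ii)$\Rightarrow$(i) you follow the paper's own remark preceding the theorem. You identify $\Supp T_{J/K}=V(J+I_d(Jac(J)))$ with $\Sing(K[[x]]/J)$ via Lemma \ref{lem.slocus}. That identification needs the converse Jacobian criterion at non-closed primes, which fails for imperfect $K$. Take $f=x^p+\alpha y^p$ with $\alpha\in K\setminus K^p$. Then $K[[x,y]]/\langle f\rangle$ is a domain whose generic point is a field (see the example after Lemma \ref{lem.smphi}), so its singular locus is $\{\x\}$. Yet $j(f)=0$, so $\tau_K(f)=\infty$. This direction therefore also needs a hypothesis such as $\chara K=0$, or ``isolated singularity'' must be read via the Jacobian ideal rather than via regularity.
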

\medskip

After considering the Tjurina number for an individual singularity,  let us now consider families. We define first the relative version of $T_{J/K}$.
\medskip

Let $R=A[x]$
and   $J \subset R$ an
ideal generated by elements  $F_1, \ldots, F_m \in R$. 
Set $F:=(F_1,...,F_m) \in R^m$ and denote by  $\langle {\partial F}/{\partial x_1}, \ldots, {\partial F}/{\partial x_n}\rangle$ the submodule of $R^m$, generated by the m-tuples ${\partial F}/{\partial x_i} = ({\partial F_1}/{\partial x_i},\dots, {\partial F_m}/{\partial x_i})$,  $i=1,\ldots, n$.
We define the $R$-module
$$T_{J/A}:=  R^m\big/J\cdot R^m+\langle {\partial F}/{\partial x_1}, \ldots, {\partial F}/{\partial x_n}\rangle\cdot R$$
and call it the {\em relative Tjurina module of $R$ over $A$}.
\medskip

Furthermore let $I\subset R$ an ideal with $R/I$  finite over $A$ and $^\wedge$ the $I$-adic completion.
For $\fp \in \Spec A$ we have the fiber
$R(\fp)=k(\fp)[x]$ and the completed fiber $\hat R(\fp)=k(\fp)[x]^\wedge$.
$\hat R(\fp)$ is a semi-local ring with maximal ideals $\fm_{\fp,1},...\fm_{\fp,r_\fp}$ and the direct sum of 
the local rings (see Remark \ref{rm.facts})
$$\hat R(\fp)_{\fm_\fp,i} \cong k(\fm_{\fp,i})[[x]],
 \text{ with } k(\fm_{\fp,i})  \text{ finite over } k(\fp).$$ 
Let $\hat F_i(\fp)$ resp. $\hat J(\fp)$ denote the images of $F_i$ resp. $J$ in $\hat R(\fp)$. 
Then $\hat J(\fp) = \langle \hat F_1(\fp),...,\hat F_m(\fp) \rangle$ and $\hat T_{J(\fp)/k(\fp)}=(T_{J/A}\otimes_Ak(\fp))^\wedge$ satisfies
$$\hat T_{J(\fp)/k(\fp)}= \hat R(\fp)^m\big/J(\fp)\cdot \hat R(\fp)^m+\langle {\partial F(\fp)}/{\partial x_1}, \ldots, {\partial F(\fp)}/{\partial x_n}\rangle\cdot \hat R.$$
It is the direct sum of its localizations at $\fm_{\fp,1},...,\fm_{\fp,r}$.
In particular, 
$$\tau_{k(\fp)}(\hat J(\fp)) = \dim_{k(\fp)} \hat T_{J(\fp)/k(\fp)}
=  \sum_{i=1}^{r_\fp} \dim_{k(\fm_{\fp,i})} 
(\hat T_{J(\fp)/k(\fp)})_{\fm_{\fp,i}},$$
where 
$$\dim_{k(\fm_{\fp,i})} (\hat T_{J(\fp)/k(\fp)})_{\fm_{\fp,i}}
=\tau_{k({\fm_{\fp,i}})}(\hat J(\fp)_{\fm_{\fp,i}} )$$ 
is the Tjurina number (as defined above) w.r.t. ${k(\fm_{\fp,i})}$  of $\hat J(\fp)_{\fm_{\fp,i}}$, the ideal generated by the image of the $F_i(\fp)$ in 
$\hat R(\fp)_{\fm_\fp,i} \cong k(\fm_{\fp,i})[[x]]$.

We call $\tau_{k(\fp)}(\hat J(\fp))$ the {\em Tjurina number along $V(I)$ of the ideal 
$\hat J(\fp)=\langle F_1(\fp),...,F_m(\fp)\rangle \subset \hat R(\fp)$ w.r.t. $k(\fp)$}, because  $\tau_{k(\fp)}(\hat J(\fp))$ counts the 
Tjurina numbers at the maximal ideals 
 $\fm_{\fp,i} \in V(I\hat R(\fp))$, corresponding to points in  $V(I)$ lying over $\fp$. 
 If $I = \Ann_R (T_{J/A})$ we call $\tau_{k(\fp)}(\hat J(\fp))$ the {\em total Tjurina number} of $\hat J(\fp)$  w.r.t. $k(\fp)$.
 \medskip

\begin{proposition}\label{prop.TI} 
Let $A$ be Noetherian, $R=A[x]$ and   
$I\subset R$ an ideal such that $R/I$ is finite over $A$. Let $F_1, \ldots, F_m \in R$ and  $J \subset  R$ the ideal generated by  $F_1,\ldots, F_m $.  Let $\fp \in \Spec A$ and $\hat J(\fp) \subset \hat R(\fp)$  the ideal generated by $F_1(\fp),\ldots,F_m(\fp) \in \hat R(\fp)$.
Then $\fp$ has an open neighbourhood $U \subset \Spec A$ such that   
$$\tau_{k(\fp)}({\hat J(\fp)})\geq  \tau_{k(\fq)}(\hat J(\fq)) \text{ for all } \fq \in U.$$ 
\end{proposition}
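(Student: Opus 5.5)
The plan is to reduce the statement to the semicontinuity theorem, Theorem \ref{thm.scont}.1, applied to the relative Tjurina module. Set $M := T_{J/A} = R^m/(J\cdot R^m+\langle \partial F/\partial x_1,\ldots,\partial F/\partial x_n\rangle)$. Since $R=A[x]$ is Noetherian and $M$ is a quotient of $R^m$, $M$ is a finitely generated $R$-module. Moreover $A\to R$ is a morphism of Noetherian rings and $R/I$ is finite over $A$. So all hypotheses of Theorem \ref{thm.scont} hold, and $\fp\mapsto \hat d_\fp(M)=\dim_{k(\fp)}\hat M(\fp)$ is upper semicontinuous on $\Spec A$.

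The key step is to identify $\hat M(\fp)$ with $\hat T_{J(\fp)/k(\fp)}$, compatibly for every $\fq$. Write $M=\coker(T)$, where $T\colon R^{mm'}\oplus R^n\to R^m$ is the presentation matrix. Its columns are the vectors $F_j e_k$ and the derivative vectors $\partial F/\partial x_i$. By Corollary \ref{cor.pres}, $\hat M(\fq)=\coker(T(\fq))$ over $\hat R(\fq)$ for every $\fq\in\Spec A$, where $T(\fq)$ has the images of those entries in $\hat R(\fq)$.

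It remains to check that the image of $\partial F_j/\partial x_i$ in $\hat R(\fq)$ is the derivative of the image of $F_j$. Partial derivatives commute with base change $A[x]\to k(\fq)[x]$. On $k(\fq)[x]^{\wedge,I}$ the derivation $\partial/\partial x_i$ extends continuously, because a derivation maps $I^{k}$ into $I^{k-1}$. Under the decomposition $\hat R(\fq)\cong\oplus_i k(\fm_{\fq,i})[[x]]$ of Remark \ref{rm.facts}, this extended derivation agrees with the formal derivative on each summand. Hence $\hat M(\fq)=\hat T_{J(\fq)/k(\fq)}$, which is the identity $\hat T_{J(\fp)/k(\fp)}=(T_{J/A}\otimes_A k(\fp))^\wedge$ already noted before the statement. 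Consequently $\tau_{k(\fq)}(\hat J(\fq))=\hat d_\fq(M)$ for all $\fq$.

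With this identification, semicontinuity at $\fp$ says there is an open $U\ni\fp$ with $\hat d_\fq(M)\le \hat d_\fp(M)$ for $\fq\in U$, which is exactly the claim. If $\tau_{k(\fp)}(\hat J(\fp))=\infty$ the claim is trivial, and the case where it is $0$ is covered by Lemma \ref{lem.open}. The main point needing care is the derivative compatibility under $I$-adic completion on each local summand. It holds because the isomorphism $\hat R(\fp)_{\fm_{\fp,i}}\cong k(\fm_{\fp,i})[[x]]$ is not required: Definition \ref{def.tau} applied to $\hat R(\fq)$ only uses the derivatives induced from $k(\fq)[x]$, so one may take that as the definition of $\hat T_{J(\fq)/k(\fq)}$.
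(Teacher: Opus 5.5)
Your argument is correct and is the paper's own proof: apply Theorem \ref{thm.scont}.1 to the finitely generated $R$-module $T_{J/A}$ and use the identification $\hat T_{J(\fq)/k(\fq)}=(T_{J/A}\otimes_A k(\fq))^\wedge$, which you justify via Corollary \ref{cor.pres} and the continuous extension of $\partial/\partial x_i$ to the completion. One caution: your intermediate claim that the extended $\partial/\partial x_i$ agrees with the formal derivative on each summand $k(\fm_{\fq,i})[[x]]$ is false when $k(\fm_{\fq,i})$ is inseparable over $k(\fq)$ (e.g.\ in characteristic $p$ with $A=K[t]$, $I=\langle x^p-t\rangle$, $\fq=\langle 0\rangle$, the derivation $\partial/\partial x$ kills the uniformizer $x^p-t$ but not $x$, so it kills no coefficient field), but, as you note yourself, this claim is not needed, since the statement only concerns $\dim_{k(\fq)}$ of the module defined via the derivatives induced from $k(\fq)[x]$.
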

\begin{proof} The finitely generated $R$-module
$T_{J/A}$ satisfies $\hat T_{J(\fp)/k(\fp)}=(T_{J/A}\otimes_Ak(\fp))^\wedge$. The statement follows from Theorem \ref{thm.scont}.
\end{proof}
\medskip

Proposition \ref{prop.TI} holds for any ideal, in particular for complete intersection ideals, which we treat now.

\begin{proposition} \label{prop.icis}
Let $A$ be a Noetherian ring, $R=A[x]$ and  $I \subset R$ an ideal  such that $ R/I R$ is finite over $A$.
Let $J\subset R$ be an ideal generated by 
 $F_1,\ldots, F_m \in R$, such that $ R/J R$ is $A$-flat.
 For $\fp \in \Spec A$ denote  by $\hat J(\fp) \subset  \hat R(\fp)$  the ideal generated by $\hat F_1(\fp),\ldots,\hat F_m(\fp)$.

\begin{enumerate}
\item  If  $\hat I(\fp)$ is a complete intersection, then $\hat I(\fq)$ is a complete intersection for $\fq$ in an open neighbourhood of $\fp$ in $\Spec A$. 
\item  Assume that $\hat I(\fp)$ is an ICIS.
Then  $\hat I(\fq)$ is an ICIS with Tjurina numbers
$\tau_{k(\fp)}(\hat I(\fp)) \geq \tau_{k(\fq)}(\hat I(\fq))$ for $\fq$ in an open neighbourhood of $\fp$ in $\Spec A$.
\end{enumerate}
\end{proposition}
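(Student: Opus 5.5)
The statement writes $\hat I(\fp)$, but this is evidently a typo for $\hat J(\fp)$: $I$ is the completion ideal and $J=\langle F_1,\ldots,F_m\rangle$ the family. I will therefore prove the statement for $\hat J(\fq)$. Fix $\fp$ with $\hat J(\fp)\ne \hat R(\fp)$; otherwise $\hat J(\fq)=\hat R(\fq)$ near $\fp$ by Lemma \ref{lem.open}.2 applied to $M=R/J$, and there is nothing to show.

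\textbf{Step 1: reduce to regular sequences.} By Remark \ref{rm.facts}, $\hat R(\fp)=\oplus_i k(\fm_{\fp,i})[[x]]$ and $\hat J(\fp)$ decomposes accordingly. A proper ideal of the regular local ring $k(\fm_{\fp,i})[[x]]$ is a complete intersection iff it is generated by a regular sequence. After possibly changing generators (Proposition \ref{prop.contact}), I would arrange that the complete intersection condition at $\fp$ is equivalent to $\hat F_1(\fp),\ldots,\hat F_c(\fp)$ being an $\hat R(\fp)$-regular sequence generating $\hat J(\fp)$.

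\textbf{Step 2: arrange the hypotheses of Proposition \ref{prop.reg}(iii).} I would take $M=R$, which is free, hence flat over $A$. I would replace $I$ by $I+J$, or by $I\cap$ (the relevant component), so that the $F_j$ lie in $I$; the completed fibers do not change where $J$ meets $V(I)$. The flatness of $R/J$ is exactly what (iii) requires. Openness of $\Supp_R(R/I)\to \Spec A$ would follow from flatness of $R/(I+J)$ over $A$, or else by shrinking to a neighbourhood. Proposition \ref{prop.reg}(iii) then gives that $\hat F_1(\fq),\ldots,\hat F_m(\fq)$ is an $\hat R(\fq)$-sequence near $\fp$. Hence $\hat J(\fq)$ is a complete intersection at every maximal ideal of $\hat R(\fq)$, which proves (1).

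\textbf{Step 3: prove (2).} By Theorem \ref{thm.GPh19}, $\hat J(\fp)$ being an ICIS is equivalent to $\tau_{k(\fp)}(\hat J(\fp))<\infty$. Proposition \ref{prop.TI} gives an open $U$ with $\tau_{k(\fq)}(\hat J(\fq))\le \tau_{k(\fp)}(\hat J(\fp))<\infty$ for $\fq\in U$. Since $\tau$ is additive over the local summands, each localization $\hat J(\fq)_{\fm_{\fq,i}}$ has finite Tjurina number. Theorem \ref{thm.GPh19} then shows each is an ICIS, and so $\hat J(\fq)$ is an ICIS together with the inequality. Note that (2) does not even need (1), because Theorem \ref{thm.GPh19} already contains the complete intersection property.

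\textbf{Main obstacle.} The hardest step is Step 2, meeting the hypotheses of Proposition \ref{prop.reg}(iii). Two issues arise there. First, the generators $F_i$ need not lie in $I$. Second, the number of generators $m$ may exceed the codimension, so some $F_i$ must be discarded locally. I would first try to shrink $\Spec A$ and localize $R$ via Proposition \ref{prop.sem2} so that a minimal set of generators extends. If that fails, I would prove (1) as a consequence of (2)'s argument: apply Theorem \ref{thm.GPh19} where applicable, and otherwise use openness of the flat locus combined with the fibrewise regular sequence criterion (Proposition \ref{prop.reg}(i)).
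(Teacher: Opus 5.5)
You take the paper's route: (1) from Proposition~\ref{prop.reg}(iii) with $M=R$, and (2) from Proposition~\ref{prop.TI} and Theorem~\ref{thm.GPh19}. You are also right that $\hat I$ should read $\hat J$.

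Your Step~3 is the paper's argument for (2), except for your remark that (1) is not needed. That remark rests on Theorem~\ref{thm.GPh19} in the literal form quoted, and that form fails in dimension $0$: $J=\langle x,y\rangle^2\subset K[[x,y]]$ has $\tau_K(J)=7$ but is not a complete intersection. The paper's proof of (2) uses that $\hat J(\fq)$ is a complete intersection, so (2) depends on (1).

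The genuine gap is that (1) is not proved. You locate the weak points correctly (the paper passes over them with ``we may assume'' and ``the assumptions are fulfilled''), but none of your remedies removes them.
\begin{itemize}
\item Proposition~\ref{prop.contact} changes generators inside one component $k(\fm_{\fp,i})[[x]]$. The new generators are power series over $k(\fm_{\fp,i})$, not elements of $J\subset A[x]$. A sub-collection $F_1,\dots,F_c$ no longer generates $J$, so flatness of $R/\langle F_1,\dots,F_c\rangle$ is not available.
\item The codimension can differ between components. Take $A=K[t]$, $R=A[x,y]$, $I=\langle x,y(y-1)\rangle$, $J=\langle x(y-1),y(y-1)\rangle$; then $R/I\cong A^2$, $R/J$ is $A$-free and $J\subset I$. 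For every $\fp$, $\hat J(\fp)$ is a complete intersection of codimension $2$ at $(0,0)$ and of codimension $1$ at $(0,1)$. So no sequence at all is $\hat R(\fp)$-regular and generates $\hat J(\fp)$.
\item Openness of $V(I+J)\to\Spec A$ does not follow from the hypotheses and cannot be gained by shrinking. For $A=K[t]$, $R=A[x]$, $I=\langle x,t\rangle$, $J=\langle x\rangle$, the image is the closed point $\langle t\rangle$ and $\hat R(\fq)=0$ for $\fq\neq\langle t\rangle$. There the conclusion of Proposition~\ref{prop.reg}(iii) fails, although (1) holds trivially.
\item The detour through Theorem~\ref{thm.GPh19} reaches only isolated singularities, since a non-isolated complete intersection has $\tau=\infty$.
\item Proposition~\ref{prop.sem2} needs $\fp$ maximal and still yields no generators in $R$.
\end{itemize}

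The missing idea is to choose generators locally on $\Spec R$ and to use closedness instead of openness.
\begin{itemize}
\item Replace $I$ by $I+J$, as you suggest. This is harmless, since $\hat J(\fq)$ is the unit ideal on the components off $V(J)$.
\item Let $\fn\in V(I+J)$ be one of the finitely many primes over $\fp$. The component of $\hat J(\fp)$ at $\fn$ is a complete intersection, $R(\fp)_\fn$ is regular, and completion preserves dimension and minimal number of generators. Hence a minimal basis $g_1,\dots,g_c$ of $J(\fp)R(\fp)_\fn$ is $R(\fp)_\fn$-regular.
\item Flatness of $R/J$ gives $\mathrm{Tor}_1^A(R/J,k(\fp))=0$, i.e.\ $JR_\fn/\fp JR_\fn\cong J(\fp)R(\fp)_\fn$. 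So lifts $G_1,\dots,G_c\in J$ generate $JR_\fn$ by Nakayama, and by \cite[Theorem 22.5]{Mat86} they form an $R_\fn$-regular sequence.
\item Both properties persist on some $D(f_\fn)\ni\fn$. Take any $\fn'\in D(f_\fn)\cap V(I+J)$ and set $\fq=\fn'\cap A$. Flatness of $(R/J)_{\fn'}=R_{\fn'}/\langle G\rangle$, together with Theorem 22.5 applied from $G_c$ downwards, shows that $G(\fq)$ is an $R(\fq)_{\fn'}$-regular sequence generating $J(\fq)R(\fq)_{\fn'}$. Hence the component of $\hat J(\fq)$ at $\fn'$ is a complete intersection.
\item The map $V(I+J)\to\Spec A$ is finite, hence closed. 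So the complement $U$ of the image of $V(I+J)\setminus\bigcup_\fn D(f_\fn)$ is an open neighbourhood of $\fp$, and over $U$ every point of $V(I+J)$ lies in some $D(f_\fn)$.
\end{itemize}
This proves (1) with generators depending on the point and without any openness hypothesis. Part (2) then follows by combining (1) with Proposition~\ref{prop.TI}.
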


\begin{proof} 1. Since $\hat I(\fp)$ is a complete intersection, we may assume that $\hat F_1(\fp),\ldots,\hat F_m(\fp)$ is a $\hat R(\fp)$-sequence. By Proposition \ref{prop.reg} (iii), applied to $M=R$ (the assumptions are fullfilled), 
  $\hat F_1(\fq),\ldots,\hat F_m(\fq)$ is a $\hat R(\fq)$-sequence, hence $\hat I(\fq)$ is a complete intersection, for $\fq$ in an open neighbourhood of $\fp$ in $\Spec A$.\\
2. follows from Proposition \ref{prop.TI} since  for $\hat J(\fq) \subset \hat R(\fq)]$ a complete intersection  
$\tau_{k(\fq)}(\hat I(\fq))$ is the Tjurina number of $\hat J(\fq)$.
 Moreover, $\hat I(\fq)$ is am ICIS by Theorem \ref{thm.GPh19}.
\end {proof}

Similarly as for the hypersurface singularities, Corollaries \ref{cor.mutau} and  \ref{cor.mutau2} hold for complete intersection singularities.\\

{\bf Problem:}
Let us mention, that the Milnor number $\mu(X)$ for a complex analytic  ICIS $X$ is defined topologically as the middle Betti number of the Milnor fiber of a smoothing. With this definition it can  be proved topologically
 that the Milnor number is semicontinuous under deformations.
 However, the topological definition cannot be used for arbitrary fields.
 
 On the other hand, there exist algebraic formulas by \cite{Gr75} 
and \cite{Le74} which are alternating sums with semicontinuous terms, but ist is not clear if the sum itself is semicontinuous.
Moreover, in \cite{Gr75} the algebraic formula $\mu(X) = \dim_\C \Omega_X^n/d\Omega_X^{n-1}$ is proven, with 
$(\Omega_X^{\bullet},d)$ the Poincaré complex of $X$ and 
$n = \dim (X)$. However, $\Omega_X^n/d\Omega_X^{n-1}$ is not an $\ko_X$-module and our theory cannot be applied.

It remains an open problem, if the Milnor number of an ICIS over an arbitrary field,  defined by one of the algebraic formulas above, is semicontinuous under deformations.


\end{document}